\documentclass{amsart}

\usepackage{amsmath,amsfonts,graphicx}
\usepackage{commath}
\usepackage{mathtools}

\usepackage{xcolor}

\usepackage{changes}
\newcommand{\stkout}[1]{\ifmmode\text{\sout{\ensuremath{#1}}}\else\sout{#1}\fi}
\setdeletedmarkup{\stkout{#1}}

\usepackage{comment}
\usepackage{amssymb}

\usepackage{todonotes}
\usepackage{hyperref}
\usepackage[capitalise]{cleveref}

\usepackage{tikz}
\usetikzlibrary{decorations.pathreplacing,arrows.meta}
\tikzset{>={Stealth[scale=.4]}}
\usetikzlibrary{calc}

\theoremstyle{plain}
\newtheorem{theorem}{Theorem}[section] % reset theorem numbering for each chapter

\theoremstyle{definition}
\newtheorem{definition}[theorem]{Definition} % definition numbers are dependent on theorem numbers
\newtheorem{corollary}[theorem]{Corollary}
\newtheorem{proposition}[theorem]{Proposition}
\newtheorem{lemma}[theorem]{Lemma}

\theoremstyle{plain}
\newtheorem{maintheorem}{Theorem}

\crefname{maintheorem}{Theorem}{Theorems}
\Crefname{maintheorem}{Theorem}{Theorems}

\theoremstyle{remark}
\newtheorem*{remark}{Remark}

\newcommand{\R}{\mathbb{R}}

\newcommand{\N}{\mathbb{N}}

\newcommand{\dist}{\mathrm{dist}}

\begin{document}
\title[Thermodynamic formalism for endomorphisms with critical set]{Thermodynamic formalism for
Partially Hyperbolic Endomorphisms with Critical Set}

\author{Kecheng Li}
\address{Department of Mathematics \& Computer Science, Hendrix College, 
Conway, AR 72032}
\email{lik@hendrix.edu}
\urladdr{} % Delete if not wanted.

\begin{abstract}
    We study the thermodynamic formalism of a broad class of partially hyperbolic, non-uniformly expanding endomorphisms with critical set, and show that they admit unique equilibrium states for all H\"older potentials of small variation. In particular, our results yield uniqueness of equilibrium states for Viana maps with small variation H\"older potentials, for non-uniformly expanding interval maps with finitely many critical points and hyperbolic H\"older potentials, and for Benedicks--Carleson quadratic maps with arbitrary H\"older potentials.
\end{abstract}

\maketitle

\section{Introduction}\label{sec:main}
We study a class of partially hyperbolic, non-uniformly expanding endomorphisms with non-flat critical set. These systems admit an invariant dominated splitting with a one-dimensional center direction, and the main difficulty is that expansion along this center direction is only non-uniform: it may be interrupted by returns near the critical set, where the map folds and the derivative becomes small. Thus, although such systems are far from uniformly hyperbolic, they still retain enough hyperbolic structure to support a meaningful thermodynamic formalism. 

We establish existence and uniqueness of equilibrium states for potentials with sufficiently small variation for this class of systems. In particular, this includes the zero potential and hence yields uniqueness of the measure of maximal entropy. We also develop a one-dimensional counterpart for non-uniformly expanding interval maps with finitely many non-flat critical points. In this setting, the small variation assumption can be replaced by hyperbolic potential. This gives a stronger conclusion for the Benedicks--Carleson maps: every H\"older continuous potential admits a unique equilibrium state.

We now state the main results precisely:
\begin{maintheorem}\label{thm:main}
Let \(f:M\to M\) be a \(C^2\) endomorphism with critical set \(\mathcal{C}\), and let \(\Lambda\subset M\) be a compact forward-invariant set such that \(f|_\Lambda\) is \textbf{partially hyperbolic} with
\[
T_\Lambda M = E^u \oplus E^c, \qquad \dim E^c = 1,
\]
Assume moreover that:

\begin{enumerate}
    \item \(f\) is a local diffeomorphism on \(M\setminus \mathcal C\), and \(\mathcal C\) is \textbf{non-flat} (\cref{nonflat}) in the center direction;

    \item \textbf{non-uniformly expanding}: there exist a constant \(c_0>0\) and a subset \(H\subset \Lambda\) of positive Lebesgue measure such that for every \(x\in H\),
    \[
    \limsup_{n\to\infty}\frac1n\sum_{j=0}^{n-1}
    \log \|Df(f^j(x))^{-1}\|
    \le -c_0<0;
    \]

    \item \textbf{slow recurrence to the critical set}: for every \(\gamma>0\), there exists \(\delta>0\) such that for every \(x\in H\),
    \[
    \limsup_{n\to\infty}\frac1n\sum_{j=0}^{n-1}
    \bigl(-\log \operatorname{dist}_\delta(f^j(x),\mathcal C)\bigr)
    \le \gamma;
    \]

    \item \(f|_\Lambda\) is \textbf{topologically exact}: for every nonempty open set \(O\subset \Lambda\), there exists \(n=n(O)\in\mathbb{N}\) such that
    \[
    f^n(O)=\Lambda;
    \]

    \item \textbf{bounded unstable Jacobian distortion}: there exists \(\kappa\in(0,c_0/2)\) such that for every \(x,y\in \Lambda\),
\[
e^{-\kappa}
<
\frac{\left|\det\bigl(Df_x|_{E^u(x)}\bigr)\right|}
     {\left|\det\bigl(Df_y|_{E^u(y)}\bigr)\right|}
<
e^{\kappa}.
\]
\end{enumerate}

Then every H\"older continuous potential \(\varphi:\Lambda\to\mathbb R\) satisfying
\[
\sup \varphi-\inf \varphi < \frac{c_0}{2}-\kappa
\]
has a unique equilibrium state \(\mu_{\varphi}\).
\end{maintheorem}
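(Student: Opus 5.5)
The plan is to use the Climenhaga--Thompson orbit decomposition criterion, adapted to non-invertible maps. Call the collection of orbit segments $\mathcal G$ \emph{good}. The criterion gives a unique equilibrium state for $\varphi$ provided three things hold. First, a collection $\mathcal G\subset\Lambda\times\mathbb N$ has specification at every scale. Second, $\varphi$ has the Bowen property on $\mathcal G$. Third, the obstructions (prefix and suffix collections $\mathcal P,\mathcal S$) together with the non-expansive part have topological pressure strictly below $P(\varphi)$. Existence is automatic because $f$ is $C^2$ and hence $h_\mu$ is upper semicontinuous, or at least entropy-expansive at small scales, so the real content is uniqueness.

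\textbf{Good orbits.} I take $\mathcal G$ to be the $(\sigma,\delta)$-hyperbolic time segments in the sense of Alves--Bonatti--Viana. These are segments $(x,n)$ for which
\[
\prod_{j=n-k}^{n-1}\|Df(f^j x)^{-1}\|\le e^{-\sigma k}
\]
holds for all $1\le k\le n$, together with the corresponding $\delta$-distance control from $\mathcal C$ (using the non-flatness constants). Hyperbolic times give backward contraction on the inverse branches of $f^n$ along $B(f^nx,\delta_1)$. Combined with $C^2$ regularity and non-flatness, they yield bounded distortion. Two properties then follow from this contraction.
\begin{enumerate}
\item \emph{Bowen property.} $\varphi$ being H\"older gives a uniform bound on $|S_n\varphi(x)-S_n\varphi(y)|$ for $y\in B_n(x,\epsilon)$ with $(x,n)\in\mathcal G$.
\item \emph{Specification.} The backward contraction maps the $\delta_1$-ball at $f^nx$ into the Bowen ball. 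Topological exactness (4) gives a uniform $N$ with $f^N(B(z,\delta_1/2))=\Lambda$ for all $z$, so segments can be glued by choosing preimages successively, which is possible for an endomorphism. Uniform $N$ follows from compactness.
\end{enumerate}

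\textbf{Obstructions.} Every orbit segment $(x,n)$ is decomposed as $(p,g,s)$, where $g$ runs between the first and last hyperbolic times in $[0,n]$. The pliss lemma applied to conditions (2)--(3) guarantees positive density of hyperbolic times for points with good averages. I therefore let $\mathcal P\cup\mathcal S$ consist of segments along which the averages of $\log\|Df^{-1}\|$ fail to be below $-c_0/2$, or fail the slow recurrence bound. The key estimate is
\[
P(\mathcal P\cup\mathcal S,\varphi)<P(\varphi).
\]
For the upper bound, the partial hyperbolicity splitting with one-dimensional $E^c$ gives growth of the number of $(n,\epsilon)$-separated points in bad segments of at most $\exp\bigl(n(\int\log|\det Df|_{E^u}|+\kappa)+n(c_0/2)\bigr)$, roughly. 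This uses the unstable Jacobian distortion (5) and the fact that center expansion along bad segments is at most $c_0/2$. For the lower bound, $P(\varphi)\ge h_{\mu}+\int\varphi\,d\mu$ for an expanding measure obtained from $H$, via the Alves--Bonatti--Viana SRB/acip construction using (2)--(3). Its entropy is at least $\int\log|\det Df|_{E^u}|+c_0$ by Pesin/Ruelle--Ledrappier for endomorphisms. Comparing the two bounds, the gap is $c_0/2-\kappa$, which exceeds $\sup\varphi-\inf\varphi$, giving the strict inequality.

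\textbf{Expansivity and main obstacle.} I also need $P^\perp_{\exp}(\varphi)<P(\varphi)$. Non-expansive points are those with no positive-density hyperbolic times, hence zero center exponent. Invariant measures supported on them have entropy at most $\int\log|\det Df|_{E^u}|$ by the Ruelle inequality, so the same gap argument applies. The step I expect to be hardest is the obstruction pressure estimate: making the count of separated orbits along bad segments near the critical set rigorous. Folding there destroys injectivity and disrupts the volume argument. I would first handle it using the slow recurrence and non-flatness to bound the number of inverse branches through $\delta$-neighbourhoods of $\mathcal C$ by $e^{\gamma n}$, with $\gamma$ taken arbitrarily small.
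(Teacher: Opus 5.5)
\textbf{Gap: the obstruction estimate.} For the collection you chose, $P(\mathcal P\cup\mathcal S,\varphi)<P(\varphi)$ is not merely unproved but false. You put into $\mathcal P\cup\mathcal S$ every segment whose average of $\log\|Df^{-1}\|$ is not below $-c_0/2$, or whose recurrence average is bad. A single close approach to $\mathcal C$ at the end of a segment is enough to put it there. By (S1), $\dist(f^{N-1}x,\mathcal C)<e^{-KN}$ gives $\log\|Df(f^{N-1}x)^{-1}\|\ge\beta KN-\log B$. For large $K$ this swamps the other $N-1$ terms, however well they expand.

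Now start from any $(x,n)$ in your $\mathcal G$ and take $\epsilon\le\delta_1$. By \cref{hyp_times_prop}, $f^n$ maps part of $B_n(x,\epsilon)$ onto $B(f^nx,\epsilon)$. By exactness, some $x'$ in that Bowen ball comes within $e^{-K(n+\tau)}$ of $\mathcal C$ after at most $\tau(\epsilon)$ further steps. The segment of $x'$ ending at that visit lies in your $\mathcal S$. Separation survives at a third of the scale, and weights are comparable by the Bowen property. A pigeonhole over the at most $\tau+1$ extra lengths then gives $P(\mathcal S,\varphi)\ge P(\mathcal G,\varphi)$.

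The decomposition itself forces $P(\varphi)\le\max\{P(\mathcal P\cup\mathcal S,\varphi),P(\mathcal G,\varphi)\}$. Hence $P(\mathcal P\cup\mathcal S,\varphi)=P(\varphi)$ whenever $\mathcal C\cap\Lambda\neq\emptyset$, and no separated-set count of the kind you sketch can hold. Your proposed remedy cannot help either: hypothesis (3) concerns only points of $H$, and these segments are exactly the ones that violate it. What is missing is an obstruction collection in which a late close approach does not condemn the whole segment. For example, keep only the suffixes your decomposition actually produces, namely segments containing no hyperbolic time, every initial piece of which is bad. You would then need a genuine argument that this smaller collection has pressure below $P(\varphi)$.

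\textbf{Comparison with the paper.} The paper never counts separated points. It takes $\mathcal P=\emptyset$ and $\mathcal S=\{(x,n):S_n\psi^c(x)\le nr\}$ with $0<r\le c_0/3$, and bounds $P(\mathcal S,\varphi)$ by Misiurewicz's half of the variational principle. A weak$^*$ limit $\mu$ of the $e^{S_n\varphi}$-weighted empirical measures on separated subsets of $\mathcal S_n$ satisfies $h_\mu+\int\varphi\,d\mu\ge P(\mathcal S,\varphi,\varepsilon)$, and the paper asserts $\lambda^c(\mu)\le r$. Ruelle's inequality $h_\mu\le\int J^u\,d\mu+\lambda^c(\mu)$, the bound $\sup J^u-\inf J^u<\kappa$, and Pesin's formula for $\mu_{\mathrm{SRB}}$ then give a gap of $c_0/6$ (\cref{low_pressure_outside,specification-gap}). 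This measure-level comparison is the natural replacement for your volume count, but it does not close the gap as written.
The step $\lambda^c(\mu)\le r$ needs lower semicontinuity of $\psi^c$, which tends to $-\infty$ at $\mathcal C$. A terminal point at distance $e^{-Kn}$ carries mass $1/n$ yet lowers $\int\psi^c$ by about $\beta K$, and this contribution vanishes in the limit. Moreover, the construction above, run with the specification the paper claims for its own $\mathcal G$, shows that this $\mathcal S$ also carries full pressure. Already in the setting of \cref{thm:main-1d}, for $f(x)=4x(1-x)$ on $[0,1]$, the points of $f^{-(n-1)}(1/2)$ alone give $P(\mathcal S,0)=\log 2=h_{\mathrm{top}}$.

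\textbf{Expansivity.} Your inference ``no positive-density hyperbolic times, hence zero center exponent'' is invalid. The paper argues in the opposite direction. An ergodic $\mu$ with $\lambda^c(\mu)>c_0/3$ gets slow recurrence for free, because (S1) gives $-\log\dist(x,\mathcal C)\le\beta^{-1}(\log B-\psi^c(x))$ and hence $\mu$-integrability (\cref{dist_integrable,slow_rec}). So $\mu$ has infinitely many hyperbolic times almost everywhere and does not charge $\mathrm{NE}(\epsilon)$ for small $\epsilon$. Measures that do charge it satisfy $h\le\int J^u+c_0/3$, which still leaves your arithmetic a margin above $c_0/6$.

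\textbf{Existence.} $C^2$ regularity does not make $\mu\mapsto h_\mu$ upper semicontinuous. You do not need this claim, since existence is part of the conclusion of the Climenhaga--Thompson theorem.
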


When the unstable bundle is trivial, we also obtain a one-dimensional counterpart of the main result for non-uniformly expanding interval maps with finitely many critical points:

\begin{maintheorem}\label{thm:main-1d}
Let \(f:I\to I\) be a \(C^2\) interval map with finite non-flat critical set \(\mathcal C\), and let \(J\subset I\) be a compact forward-invariant interval such that \(f|_J\) is topologically exact. Assume that there exist a constant \(c_0>0\) and a subset \(H\subset J\) of positive Lebesgue measure such that:

\begin{enumerate}
    \item for every \(x\in H\),
    \[
    \limsup_{n\to\infty}\frac1n\sum_{j=0}^{n-1}\log |f'(f^jx)|^{-1}\le -c_0<0;
    \]

    \item for every \(\gamma>0\), there exists \(\delta>0\) such that for every \(x\in H\),
    \[
    \limsup_{n\to\infty}\frac1n\sum_{j=0}^{n-1}
    \bigl(-\log \operatorname{dist}_\delta(f^j(x),\mathcal C)\bigr)\le \gamma.
    \]
\end{enumerate}

Then every H\"older continuous potential \(\varphi:J\to\mathbb R\) satisfying
\[
\max_{\nu\in\mathcal M(f|_J)}\int\varphi\,d\nu<P_{\mathrm{top}}(\varphi;f|_J)
\]
has a unique equilibrium state \(\mu_\varphi\) with positive entropy. Moreover, \(\mu_\varphi\) satisfies the upper level-2 large deviations bound for weak*-closed convex sets stated in \cref{ct}.
\end{maintheorem}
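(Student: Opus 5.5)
The plan is to run the Climenhaga--Thompson machinery (the criterion referred to as \cref{ct}) for $f|_J$. We need a decomposition of finite orbit segments $(\mathcal P,\mathcal G,\mathcal S)$ satisfying four conditions:
\begin{enumerate}
\item $\mathcal G$ has specification at every scale;
\item $\varphi$ has the Bowen property on $\mathcal G$;
\item $P(\mathcal P\cup\mathcal S,\varphi)<P_{\mathrm{top}}(\varphi)$;
\item $h^\perp_\mu$-expansivity, i.e.\ the entropy of obstructions to expansivity is smaller than the pressure.
\end{enumerate}
The hypothesis $\max_\nu\int\varphi\,d\nu<P_{\mathrm{top}}(\varphi)$ is what lets us beat the "bad" pieces. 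The same criterion also gives the level-2 upper large deviations bound for convex closed sets.

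\textbf{Step 1: the good segments $\mathcal G$.} Fix small $\gamma>0$, with the corresponding $\delta$ from the slow-recurrence hypothesis. Call $(x,n)$ good if $x$ has $(\sigma,\delta)$-hyperbolic-time-like behaviour at both ends of the segment. Concretely:
\begin{itemize}
\item for all $0\le k<n$, $\sum_{j=k}^{n-1}\log|f'(f^jx)|\ge c(n-k)$;
\item for all $0\le k<n$, $\sum_{j=k}^{n-1}-\log\operatorname{dist}_\delta(f^jx,\mathcal C)\le\gamma(n-k)$.
\end{itemize}
Non-flatness plus $C^2$ then gives bounded distortion on the Bowen ball $B_n(x,\epsilon)$. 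This ball is mapped diffeomorphically by $f^n$ onto an interval of definite size $\epsilon$ around $f^nx$, and backward contraction holds: $|f^{n-k}(B_n)|\le e^{-c(n-k)/2}\epsilon$.

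From these two facts:
\begin{itemize}
\item The Bowen property follows from Hölder continuity of $\varphi$ and the exponential backward contraction.
\item Specification follows from topological exactness. An interval of length $\epsilon$ covers $J$ after a uniform time $N(\epsilon)$, so we can chain images of the large intervals $f^n(B_n(x_i,\epsilon))$ with uniform transition times, pulling back through the diffeomorphic branches.
\end{itemize}

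\textbf{Step 2: decomposition and bad pressure.} For an arbitrary segment $(x,n)$, let $\mathcal P$ be the prefix up to the first "hyperbolic-time" index, and $\mathcal S$ the suffix after the last one, using the Pliss lemma. We then set $\mathcal P=\mathcal S$ to be the collection of segments along which no such intermediate good time exists.

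We then bound $P(\mathcal P\cup\mathcal S,\varphi)\le \sup\{h_\nu+\int\varphi\,d\nu\}$ over measures $\nu$ that are limits of empirical measures of bad segments. Such limits have either nonpositive Lyapunov exponent or a large critical-recurrence integral. For an interval map, Ruelle's inequality gives $h_\nu\le\max(0,\lambda(\nu))$. Fast-recurrence measures are handled by letting $\gamma\to0$: their exponent is forced down via non-flatness, since $\log|f'|\approx\ell\log\operatorname{dist}(\cdot,\mathcal C)$. Hence
\[
P(\mathcal P\cup\mathcal S,\varphi)\le\max_\nu\int\varphi\,d\nu+o(1)<P_{\mathrm{top}}(\varphi).
\]

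\textbf{Step 3: expansivity and the conclusion.} Expansivity is obtained as follows:
\begin{itemize}
\item An interval map with finitely many non-flat critical points and no wandering intervals (guaranteed by topological exactness) is entropy-expansive away from measures with zero exponent.
\item Measures with $\lambda(\nu)>0$ have trivial non-expansive set at small scale, since positive exponent gives local injectivity along typical orbits.
\item Measures with $\lambda\le 0$ have zero entropy, so they cannot carry pressure above $\max\int\varphi$.
\end{itemize}
Hence $P^\perp_{\exp}(\varphi)<P_{\mathrm{top}}(\varphi)$. Climenhaga--Thompson then yields a unique equilibrium state together with the large deviations bound. Positivity of entropy follows because $h_{\mu_\varphi}+\int\varphi\,d\mu_\varphi=P_{\mathrm{top}}(\varphi)>\max\int\varphi$.

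\textbf{Main obstacle.} I expect Step 2 to be the hardest: relating the pressure of the bad collections to invariant measures with small exponent or fast recurrence. The empirical-measure limits must be controlled uniformly, and $-\log\operatorname{dist}$ is unbounded. My first approach is the truncated $\operatorname{dist}_\delta$ together with upper semicontinuity arguments. Note also that $H$ of positive Lebesgue measure is only used to ensure that $\mathcal G$ is nonempty at every length, i.e.\ that hyperbolic times exist.
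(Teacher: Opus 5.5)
Your overall scheme is the paper's: the criterion of \cref{ct}, backward-contracting good segments for specification and the Bowen property, Ruelle's inequality plus the hyperbolicity gap for the bad part, and positivity of entropy from the gap. Step~2, however, has a genuine gap.

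\textbf{Step 2.} The decomposition is never actually defined: ``set \(\mathcal P=\mathcal S\) \dots'' does not assign a triple \((p,g,s)\) to each \((x,n)\). Since your good condition only concerns the right endpoint, the workable choice is \(\mathcal P=\emptyset\), the good part ending at the last hyperbolic time, and \(\mathcal S\) the segments with no hyperbolic time.

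More seriously, your treatment of fast-recurrence limits is wrong. Near \(\mathcal C\), non-flatness gives \(\log|f'|\le\log B+\beta\log\dist(\cdot,\mathcal C)\). So \(\int-\log\dist_\delta\,d\nu>\gamma\) only lowers the trivial bound \(\log\max|f'|\) on \(\lambda(\nu)\) by about \(\beta\gamma\), and letting \(\gamma\to0\) weakens this constraint rather than strengthening it. Your \(\delta\) is tuned to Lebesgue-typical points of \(H\). Nothing in your argument rules out a measure with free energy close to \(P_{\mathrm{top}}(\varphi)\) and \(\int-\log\dist_\delta\,d\nu>\gamma\); its typical segments have no hyperbolic times, so \(\mathcal S\) carries its pressure.

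In addition, weak\(^*\) limits of empirical measures are not ergodic. Applied to the ergodic decomposition, Ruelle's inequality only bounds \(h_\nu\) by the average of the positive parts of the exponents. To pass from \(\int\log|f'|\,d\nu\le r\) to \(h_\nu\le r\) you need every ergodic measure on \(J\) to have nonnegative exponent. The paper obtains this from topological exactness via \cite{RL20} in \cref{low_pressure_outside_1d}.

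The paper sidesteps the recurrence class altogether. It defines \(\mathcal G\) and \(\mathcal S\) through the single cocycle \(\log|f'|\) (all backward partial sums \(>kr\), resp.\ total sum \(\le nr\)), takes \(\mathcal P=\emptyset\) and the maximal bad suffix, and uses \(r\le c_0/3\) with \(c_0\) shrunk below the gap. The price is that \(\mathcal G\) has no distance-to-\(\mathcal C\) control. Your worry about unboundedness is nonetheless well founded, and the paper's one-line limit in \cref{specification-gap} does not resolve it. Since \(\log|f'|\) is only upper semicontinuous, \(\int\log|f'|\,d\mu_n\le r\) need not survive in the weak\(^*\) limit: a segment ending extremely close to \(\mathcal C\) has small total average whatever it did before.

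\textbf{Step 3.} This step fails as stated, because a positive exponent does not give local injectivity. Topological exactness forces a turning critical point \(c\) in the interior of \(J\). For each \(\epsilon>0\), every \(x\ne c\) close enough to \(c\) has a fold partner \(y\ne x\) with \(f(y)=f(x)\) and \(|x-y|<\epsilon\), so \(y\in\Gamma_\epsilon(x)\) and \(x\in\mathrm{NE}(\epsilon)\).

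Hence every invariant measure charging all neighbourhoods of \(c\) has \(\mu(\mathrm{NE}(\epsilon))>0\) at every scale. An example is an absolutely continuous invariant measure, which has positive exponent and support \(J\). If some equilibrium state charges all neighbourhoods of \(c\), then \(P^\perp_{\mathrm{exp}}(\varphi)=P_{\mathrm{top}}(\varphi)\) and the expansivity hypothesis of \cref{ct} is false. Hyperbolic times only control points of the pull-back branch \(V_x\) from \cref{hyp_times_prop}, not fold partners. The paper's \cref{expansivity} rests on the same reasoning and has the same defect.

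The same phenomenon affects Step~1. Injectivity, bounded distortion and backward contraction hold on \(V_x\), not on all of \(B_n(x,\epsilon)\). That suffices for specification, but the Bowen property must be checked for every \(y\in B_n(x,\epsilon)\), fold partners included.

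Your first bullet points at the right repair: replace \(\mathrm{NE}(\epsilon)\) by the points whose infinite Bowen ball has positive entropy. That requires two things, and neither is supplied:
\begin{enumerate}
\item a version of \cref{ct} that allows this weaker obstruction;
\item a proof at a scale uniform over all measures of high free energy, splitting at a threshold \(\lambda>c\) below the gap (as the paper does at \(c_0/3\)) rather than at \(\lambda>0\).
\end{enumerate}
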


\begin{remark}[Hyperbolic potentials]
A continuous potential \(\varphi:X\to\mathbb R\) is called \emph{hyperbolic} for \(f\) if
\[
\max_{\nu\in\mathcal M(f)}\int\varphi\,d\nu<P_{\mathrm{top}}(\varphi;f).
\]
Equivalently, \(\sup_X \frac1N S_N\varphi<P_{\mathrm{top}}(\varphi;f)\) for some \(N\ge1\); see \cite{LRL14a}.
\end{remark}

We establish uniqueness of equilibrium states through the orbit decomposition and pressure gap framework developed in \cite{CT16}, which reduces the problem to constructing a suitable orbit decomposition and verifying the required specification and pressure conditions. To carry out this program, we draw on the theory of hyperbolic times from \cite{ABV00}. This theory provides the non-uniform expansion estimates needed to detect invariant measures with high pressure and to identify good orbit segments with sufficient hyperbolic behavior.

One benefit of our approach is the construction of the unique equilibrium state as a weak Gibbs measure. In \cite{Bow74}, the lower Gibbs property is essential in ruling out mutually singular equilibrium states. This approach is generalized in \cite{CT16}, in which the authors derive the lower Gibbs property of equilibrium state for the ``essential collection of orbit segments'' which dominates in pressure as well as a global upper Gibbs property for all orbit segments.

We now review earlier results related to \cref{thm:main-1d}. In the constant potential case, the conclusion of \cref{thm:main-1d} is classical: for topologically exact multimodal interval maps, uniqueness of the measure of maximal entropy is due to Hofbauer \cite{Hof79, Hof81}. For general potentials, Li and Rivera-Letelier established existence and uniqueness of equilibrium states for hyperbolic potentials \cite{LRL14a}, and showed that under a weak hyperbolicity assumption on the map every Hölder continuous potential is hyperbolic, whence it admits a unique equilibrium state with exponential decay of correlations and no phase transition \cite{LRL14b}.

\cref{thm:main-1d} recovers the above results. Its contribution lies less in the class of admissible potentials than in the method: uniqueness is obtained by the orbit decomposition approach, under non-uniform expansion and slow recurrence to the critical set. Moreover, \cref{thm:main-1d} is the one-dimensional counterpart of \cref{thm:main} with a weaker restriction on the potential. The construction also yields Gibbs type estimates on sufficiently long orbit segments in the good collection $\mathcal G$.

\subsection{Applications}
In addition to the Gibbs construction, another advantage of the approach in this paper is that it applies in a general geometric setting and does not rely on symbolic models or inducing schemes tailored to a specific example. Once the abstract framework is established, it can be applied to concrete families of non-uniformly expanding maps. In particular, we show that the hypotheses of the main theorem are satisfied by Viana maps and the Benedicks--Carleson quadratic family. As a consequence, Viana maps admit unique equilibrium states for H\"older potentials of sufficiently small variation, while the Benedicks--Carleson maps considered here admit unique equilibrium states for every H\"older potential. In particular, both families have a unique measure of maximal entropy.

\subsubsection{Viana maps}
\begin{maintheorem}[\cite{Li26}]\label{Li26}
For every H\"older potential \(\varphi\) with sufficiently small variation, the Viana map \(f_{\alpha}\) and every map \(f\) in a sufficiently small \(C^{3}\)-neighborhood of \(f_{\alpha}\) admit a unique equilibrium state \(\mu_{\varphi}\) for \(\varphi\). Moreover, \(\mu_{\varphi}\) satisfies the upper level-2 large deviations bound of \cref{ct}.
\end{maintheorem}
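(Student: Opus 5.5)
The plan is to deduce \cref{Li26} from \cref{thm:main} by checking its hypotheses (1)--(5) for the Viana map \(f_\alpha(\theta,x)=(d\theta \bmod 1,\ a_0+\alpha\sin(2\pi\theta)-x^2)\) on \(S^1\times\mathbb R\), \(d\ge 16\), and for every \(f\) that is \(C^3\)-close to it. The results of Viana, Alves, and Alves--Bonatti--Viana \cite{ABV00} provide these properties in a uniform way across the \(C^3\)-neighbourhood. For \(\Lambda\) take the compact forward-invariant absorbing region \(S^1\times I_0\), where \(I_0\) is an interval that is mapped strictly inside itself.

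The hypotheses are checked in the following order.

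\textbf{Partial hyperbolicity.} The cone field \(\{|v_x|\le \sqrt{\alpha}\,|v_\theta|\}\) is forward invariant, and vectors in it are expanded at rate about \(d\). This gives an invariant unstable bundle \(E^u\) dominating the vertical center direction \(E^c\), with \(\dim E^c=1\). The splitting persists for \(C^1\)-perturbations.

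\textbf{Critical set.} The critical set \(\mathcal C\) is a \(C^2\)-perturbation of \(\{x=0\}\), and \(\partial_x^2 f\approx -2\neq0\) there. This gives non-flatness (\cref{nonflat}), and \(f\) is a local diffeomorphism away from \(\mathcal C\).

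\textbf{Non-uniform expansion and slow recurrence.} Alves proved, and Alves--Viana proved robustly in \(C^3\)-neighbourhoods, that Lebesgue-almost every point satisfies (2) for some \(c_0>0\) and satisfies (3). These constants are uniform over the neighbourhood. I take \(H\) to be the set of such points, which has full Lebesgue measure in \(\Lambda\).

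\textbf{Topological exactness.} This follows from the results of Alves--Viana, or directly: the horizontal expansion together with non-uniform vertical expansion makes every open set, after finitely many iterates, contain a full horizontal strip of large height, and further iterates of such a strip cover \(\Lambda\).

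\textbf{Bounded unstable Jacobian distortion.} Here \(E^u\) is one-dimensional and lies in the thin cone around the \(\theta\)-axis. Hence \(|Df|_{E^u}|=d(1+O(\sqrt\alpha))\), and the ratio in (5) lies in \((e^{-\kappa},e^{\kappa})\) with \(\kappa=O(\sqrt\alpha)\).

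The main obstacle is the compatibility condition \(\kappa<c_0/2\), because \(c_0\) is itself small, of order \(\log(1/\alpha)^{-1}\) or comparable in Viana's estimates. I would first try to make the unstable Jacobian estimate sharp: the cone aperture shrinks like \(\sqrt\alpha\) (or faster), while \(c_0\) decays only logarithmically in \(\alpha\), so decreasing \(\alpha\) should force \(\kappa\ll c_0\).

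If this comparison fails, there is a fallback. Condition (5) is used only to control the unstable part of the pressure on orbit segments. I would instead replace \(f\) by an iterate \(f^N\), or modify the proof of \cref{thm:main} so that the unstable Jacobian enters through the Birkhoff average of \(\log|\det Df|_{E^u}|\). For the Viana map this average is essentially \(\log d\) for all invariant measures.

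With all hypotheses verified, \cref{thm:main} yields a unique equilibrium state for every H\"older \(\varphi\) with \(\sup\varphi-\inf\varphi<c_0/2-\kappa\). This is the meaning of ``sufficiently small variation'' in \cref{Li26}. The upper level-2 large deviations bound follows from the Climenhaga--Thompson machinery recalled in \cref{ct}, applied to the same decomposition constructed in the proof of \cref{thm:main}.
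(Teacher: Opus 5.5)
Your route is the paper's route. The paper also proves \cref{Li26} by checking hypotheses (1)--(5) of \cref{thm:main} for \(f_\alpha\) and its \(C^3\)-perturbations. It imports non-uniform expansion, slow recurrence and exactness from Viana, Alves and Alves--Viana, and it takes \(\kappa=\log\frac{d+\epsilon}{d-\epsilon}\) from the fact that the unstable expansion is close to \(d\) (\cref{unstable_Lyap_exp}). One step fails as you set it up: the choice \(\Lambda=S^1\times I_0\).

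\textbf{Why the choice of \(\Lambda\) fails.} You chose \(I_0\) with \(f(S^1\times I_0)\subset\operatorname{int}(S^1\times I_0)\). Then every open \(O\subsetneq\Lambda\) satisfies \(f^n(O)\subset f(\Lambda)\subsetneq\Lambda\) for all \(n\ge1\), so hypothesis (4) is false for this \(\Lambda\). This is not cosmetic. In the proof of \cref{specification}, for \(j\ge2\) the point \(f^{t_j}(y)\) lies in \(f(\Lambda)\). So it cannot come \(\varepsilon\)-close to the initial point of a segment of \(\mathcal G\) that starts at distance \(\ge\varepsilon\) from \(f(S^1\times I_0)\). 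Such segments do exist in \(\mathcal G\): for example, length-one segments in the collar \((S^1\times I_0)\setminus f(S^1\times I_0)\) near the top of \(I_0\), where \(\psi^c\approx\log(2a_0)>r\).

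\textbf{The fix.} Take the paper's choice \(\Lambda=\bigcap_{n\ge0}f^n(S^1\times I_0)\), which equals \(f^2(S^1\times I_0)\) for Viana's interval. Then:
\begin{itemize}
\item exactness in the form \(f^n(O)=\Lambda\) is exactly what Alves--Viana's strong topological mixing provides;
\item \(\Lambda\) has nonempty interior, so \(H\cap\Lambda\) still has positive Lebesgue measure;
\item nothing is lost in the conclusion, since every \(f\)-invariant measure on the trapping region gives full mass to \(\Lambda\), so pressures and equilibrium states are the same.
\end{itemize}

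\textbf{On the condition \(\kappa<c_0/2\).} You flag this as the main obstacle. The paper treats \(c_0\) as the constant of Viana's theorem, uniform for small \(\alpha\) and over the \(C^3\)-neighbourhood. It then orders the choices as in \cref{main}: first \(\epsilon\) with \(\log\frac{d+\epsilon}{d-\epsilon}<c_0/2\), then \(\alpha_0\) and the neighbourhood. Your comparison of \(\sqrt\alpha\) against a logarithmic rate is a more cautious version of the same point and is fine. For the skew product the unstable slope is in fact \(O(\alpha)\).

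Your fallback via \(f^N\) would not gain anything in general. The available bounds scale by the same factor \(N\): the expansion constant in (2), the pointwise distortion in (5), and the oscillation of \(S_N\varphi\). Your other fallback is closer to what the proof actually uses. The argument of \cref{low_pressure_outside} only needs \(\int J^u\,d\eta\le\int J^u\,d\mu_{\mathrm{SRB}}+\kappa\), and this is exactly what exponent bounds like \cref{unstable_Lyap_exp} give.
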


In \cref{sec:viana-maps}, we review the main properties of Viana maps and verify that they satisfy the hypotheses of our main theorem.

A proof in the special case of Viana maps was previously obtained in \cite{Li26} by the author, and the present paper extends that argument to a broader higher-dimensional partially hyperbolic with critical set setting.

\subsubsection{Benedicks--Carleson quadratic family}
\begin{maintheorem}\label{BC}
Let \(f_a(x)=1-a x^2\), \(x\in[-1,1]\),
and let
\[
I_a=[f_a^2(0),f_a(0)]
\]
be the \emph{core interval}. For any Benedicks--Carleson parameter \(a\), every H\"older continuous potential \(\varphi:I_a\to\mathbb R\) has a unique equilibrium state \(\mu_\varphi\) for \(f_a|_{I_a}\). Moreover, \(\mu_\varphi\) has positive entropy and satisfies the upper level-2 large deviations bound for weak*-closed convex sets stated in \cref{ct}.
\end{maintheorem}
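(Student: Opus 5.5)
The plan is to deduce \cref{BC} from \cref{thm:main-1d} applied with \(f=f_a\), \(I=[-1,1]\), \(J=I_a\) and \(\mathcal C=\{0\}\). Two ingredients are needed: first, that \(f_a|_{I_a}\) satisfies the hypotheses of \cref{thm:main-1d}; second, that \emph{every} H\"older potential \(\varphi\) on \(I_a\) is hyperbolic, i.e.\ \(\max_{\nu}\int\varphi\,d\nu<P_{\mathrm{top}}(\varphi;f_a|_{I_a})\). Once both are in place, \cref{thm:main-1d} gives the unique equilibrium state with positive entropy and the upper level-2 large deviations bound, which is exactly the conclusion of \cref{BC}.

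\textbf{The hypotheses of \cref{thm:main-1d}.} The map \(f_a\) is \(C^\infty\), and its unique critical point \(0\) is quadratic, hence non-flat. The core interval \(I_a=[f_a^2(0),f_a(0)]\) is compact and forward invariant. For Benedicks--Carleson parameters, \(f_a|_{I_a}\) is topologically exact: there is no periodic attractor and no wandering interval, and an interval map with these properties and an absolutely continuous invariant measure of full support on the core is exact. This can be seen from the Collet--Eckmann growth \(|(f_a^n)'(f_a(0))|\ge e^{cn}\), which forces every small interval to grow until it covers a neighbourhood of the critical value, after which two more iterates cover \(I_a\). For the non-uniform expansion and slow recurrence conditions, I would use the Benedicks--Carleson/Young construction of the acip \(\mu_a\), which is ergodic, equivalent to Lebesgue on its support and has positive Lyapunov exponent \(\lambda_a>0\). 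Birkhoff's theorem then gives hypothesis (1) with \(c_0=\lambda_a\) on a set \(H\) of full \(\mu_a\)-measure. For hypothesis (2) I would use \(\log\dist(\cdot,0)\in L^1(\mu_a)\), which holds since the density lies in \(L^p\) for some \(p>1\). By dominated convergence, \(\int -\log\dist_\delta(x,0)\,d\mu_a\to0\) as \(\delta\to0\). Birkhoff applied for a countable sequence \(\delta_k\to0\) then yields a single set \(H\) of positive Lebesgue measure on which both limsup conditions hold, uniformly in the required sense.

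\textbf{Hyperbolicity of every H\"older potential.} This is the step I expect to be the main obstacle. My approach is to invoke the characterisation in \cite{LRL14b}: if \(f\) satisfies a weak hyperbolicity condition, such as the Collet--Eckmann condition together with topological exactness, or more generally if the Lyapunov exponents of invariant measures are bounded away from zero, then every H\"older potential is hyperbolic. Benedicks--Carleson maps are Collet--Eckmann, and by Nowicki--Sands Collet--Eckmann is equivalent to uniform hyperbolicity on periodic orbits, i.e.\ \(\inf_\nu\chi_\nu>0\).

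To make the inequality concrete, I would compare \(P_{\mathrm{top}}(\varphi)\) with \(\sup\frac1NS_N\varphi\) by producing measures whose entropy is bounded below while their integral of \(\varphi\) stays near the maximum. With \(\chi_{\inf}=\inf_\nu\chi_\nu>0\), a H\"older potential has controlled variation on the pullbacks of small intervals along any orbit, since these pullbacks shrink exponentially. Near a maximising measure one can then construct horseshoes that spend most of their time shadowing it while gaining a definite amount of entropy, and this gives strict inequality. If this direct construction proves too delicate, I would simply cite \cite{LRL14b} for the implication "Collet--Eckmann plus topologically exact implies every H\"older potential is hyperbolic" and check that its hypotheses hold for \(f_a|_{I_a}\), as argued above.
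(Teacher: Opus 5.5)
Your reduction is the paper's: apply \cref{thm:main-1d} with $J=I_a$ and $\mathcal C=\{0\}$, get hyperbolicity of every H\"older potential from \cite[Theorem A]{LRL14b}, and read off uniqueness, positive entropy and the large deviations bound. The one genuinely different ingredient is how you verify hypotheses (1)--(2). The paper cites Freitas \cite{Fre10} for non-uniform expansion and slow recurrence at Lebesgue-almost every point. You instead obtain them on a set of full $\mu_a$-measure, using the Benedicks--Carleson/Young acip and Birkhoff's theorem along a sequence $\delta_k\to 0$. That is legitimate, because \cref{thm:main-1d} only asks for a set $H$ of positive Lebesgue measure, and $\mu_a\ll\mathrm{Leb}$ gives this. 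Your route replaces Freitas' almost-everywhere statements by the existence of an ergodic acip with $\lambda_a>0$. The $L^p$ bound on the density is unnecessary: since $\log|f_a'(x)|=\log(2a|x|)$, finiteness of $\lambda_a=\int\log|f_a'|\,d\mu_a$ already gives $\log|x|\in L^1(\mu_a)$, as in \cref{dist_integrable}.

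Two points need repair. First, the last step of your exactness argument is false. You claim that once an interval covers a neighbourhood of the critical value, two more iterates cover $I_a$. But the image of $[1-\eta,1]$ is a short interval at the endpoint $1-a$, and the next image is a short interval at $f_a^3(0)$, not $I_a$. Growth to a definite size is not enough. You need some iterate to contain, say, $[0,f_a(0)]$, whose image is $I_a$, and this uses non-renormalizability of $f_a$ for these parameters. The clean fix is to cite \cite[Lemma 2.1]{You92}, as the paper does.

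Second, the horseshoe sketch is not an argument for hyperbolicity. It never weighs the entropy gained on excursions against the loss in $\int\varphi$ they cause, and for a H\"older potential of arbitrary oscillation that is the entire difficulty. So the citation of \cite{LRL14b} carries this step, and you should check its hypotheses as the paper does:
\begin{itemize}
\item topological exactness on $I_a$;
\item $|(f_a^n)'(f_a(0))|\to\infty$, which is implied by Collet--Eckmann (the condition $\inf_\nu\chi_\nu>0$ is a special case of this, not a generalization);
\item all periodic points hyperbolic repelling, which your Nowicki--Sands remark supplies.
\end{itemize}
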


\section*{Acknowledgements}

The author thanks Juan Rivera-Letelier for his helpful comments on the doctoral thesis on which this paper is based. His suggestions concerning hyperbolic potentials led to the relaxation of the small variation assumption in the one-dimensional results.

\section{Thermodynamic formalism: topological pressure and equilibrium states}
Originating in Gibbs's description of physical systems such as the molecules in a tank of gas, thermodynamic formalism exploits an analogy between classical thermodynamics and dynamical systems to single out a range of dynamically interesting measures and quantities. The central quantity is the topological pressure, and the measures it distinguishes are the equilibrium states. Over the past decades thermodynamic formalism has become a mathematical framework in its own right, one that suggests the right theorems and, to some extent, their proofs, particularly for systems exhibiting hyperbolicity, uniform or beyond.

\subsection{Definition of topological pressure}\label{pressure}
Motivated by statistical mechanics, pressure is a key quantity in the formalism that acts like the specific Gibbs free energy, measuring the exponential growth rate of orbit segments under some given weight function.

To define the topological pressure, we first introduce necessary notations and definitions. Let $X$ be a compact metric space. A set \(E \subset X\) is called \((n, \epsilon)\)-separated if \(d_n(x, y) \;\ge\;\epsilon 
\) for all \(x, y \in E\) with \(x \neq y\). Given \(\mathcal{D} \subset X \times \mathbb{N}\), we interpret \(\mathcal{D}\) as a collection of finite orbit segments. Define \(\mathcal{D}_n \;=\; \bigl\{\, x \in X : (x, n) \in \mathcal{D} \bigr\}\) the set of initial points of all orbit segments of length \(n\) in \(\mathcal{D}\). Given $\varphi: X \rightarrow \R$, called the potential function, we then consider the corresponding partition sum:
\[
\Lambda_n^{\mathrm{sep}} \bigl(\mathcal{D}, \varphi, \epsilon; f\bigr)
\;=\;
\sup \Bigl\{ \sum_{x \in E} e^{S_n \varphi(x)} 
\;:\;
E \subset \mathcal{D}_n \text{ is } (n,\epsilon)\text{-separated} 
\Bigr\}.
\]
Here $S_n \varphi := \sum_{i=0}^{n-1}\varphi(f^ix)$ is called the Birkhoff sum, where one evaluate the potential function $\varphi$ along the orbit segment $(x, f(x),\cdots,f^{n-1}(x))$ and sum up. Note that when $\varphi \equiv0$, \(\Lambda_n^{\mathrm{sep}} \bigl(\mathcal{D}, 0, \epsilon; f\bigr)\) yields the cardinality of the maximal $(n, \epsilon)$-separated set. 

\begin{definition}[Topological pressure]
    The \emph{topological pressure} of \(\varphi\) on \(\mathcal{D}\) at scale \(\epsilon\) is given by
    \[
    P\bigl(\mathcal{D}, \varphi, \epsilon; f\bigr)
    \;=\;
    \limsup_{n \to \infty}
    \frac{1}{n}
    \log \Lambda_n^{\mathrm{sep}}\bigl(\mathcal{D}, \varphi, \epsilon; f\bigr).
    \]
    Then the \emph{pressure of \(\varphi\) on \(\mathcal{D}\)} is defined as
    \[
    P(\mathcal{D}, \varphi; f)
    \;=\;
    \lim_{\epsilon \to 0}\,
    P\bigl(\mathcal{D}, \varphi, \epsilon; f\bigr).
    \]
\end{definition}

Again note that if $\varphi\equiv 0$, then we obtain the topological entropy from the above definition. (In fact, by the cohomologous relation, any constant function yields the topological entropy). In that case, the topological entropy can be understood as the exponential growth rate of the number of distinguishable orbits of length $n$, which gives a measure of the rate of mixing of the system $(X, f)$.

\subsection{Equilibrium states}
Equilibrium state are Special invariant probability measures that maximize a combination of (Kolmogorov--Sinai) entropy and energy like potential functions. Instead of defining the entropy from in point view of information theory and using measurable partitions, we give an alternative definition of the entropy of measure by Katok. 

\subsubsection{Katok's Definition of Measure-Theoretic Entropy}
For $n \in \mathbb{N}$ and $x \in X$, the \textbf{Bowen metric} (or dynamical metric) $d_n$ is defined by
\[
d_n(x, y) = \max_{0 \le i < n} d(f^i(x), f^i(y)).
\]
The open dynamical ball of radius $\varepsilon > 0$ centered at $x$ is denoted as 
\(
B_n(x, \varepsilon) = \{y \in X : d_n(x, y) < \varepsilon\}
\). 

Given an ergodic invariant probability measure $\mu$ on $X$. For any $\varepsilon > 0$ and $0 < \delta < 1$, let $N_\mu(n, \varepsilon, \delta)$ denote the minimal number of $d_n$-balls of radius $\varepsilon$ needed to cover a set of $\mu$-measure at least $1 - \delta$:
\[
N_\mu(n,\varepsilon,\delta)
=
\min_{E\subset X \text{ finite}}\left\{
\#E : \;
\mu\left(\bigcup_{x\in E}B_n(x,\varepsilon)\right)
\ge 1-\delta
\right\}.
\]

The \textbf{measure-theoretic entropy} $h_\mu(f)$ can be defined as the exponential growth rate of this minimal number of balls:
\[
h_\mu(f) = \lim_{\varepsilon \to 0} \limsup_{n \to \infty} \frac{\log N_\mu(n, \varepsilon, \delta)}{n} = \lim_{\varepsilon \to 0} \liminf_{n \to \infty} \frac{\log N_\mu(n, \varepsilon, \delta)}{n}
\]

By Katok's theorem \cite[Theorem 1.1]{Kat80}, this limit exists, is independent of $\delta \in (0, 1)$, and coincides exactly with the classical measure theoretic entropy.

Katok's entropy formula yields a topological and geometric characterization of measure-theoretic entropy. The entropy $h_\mu(f)$ measures the exponential growth rate of the number of orbit segments (dynamical balls) needed to track a significant majority (\(1-\delta\)) of the system's trajectories. Comparing this formula with the covering definition of topological
entropy gives \(h_\mu(f)\le h_{\mathrm{top}}(f)\), which is one direction of the variational principle.

\subsubsection{Variational Principle and Equilibrium States}
    Let \(\mathcal{M}(f)\) be the set of \(f\)-invariant Borel probability measures, and \(\mathcal{M}_e(f)\) the subset of ergodic measures in \(\mathcal{M}(f)\). The variational principle \cite[Theorem 9.10]{Wal82} says that,
    \begin{equation}\label{variational principle}
    P_{\mathrm{top}}(\varphi; f)
    \;=\;
    \sup_{\mu \in \mathcal{M}(f)}
    \Bigl(
    h_\mu(f)
    \;+\;
    \int \varphi \,d\mu
    \Bigr)
    \;=\;
    \sup_{\mu \in \mathcal{M}_e(f)}
    \Bigl(
    h_\mu(f)
    \;+\;
    \int \varphi \,d\mu
    \Bigr).
    \end{equation}

\begin{definition}[Equilibrium State]
    A measure $\mu \in M(f)$ achieving the supremum in \eqref{variational principle} is called an \emph{equilibrium state}. If $\varphi = 0$, such a measure is called \emph{measure of maximal entropy} (MME).
\end{definition}

\section{Non-uniform Expansivity and Specification} \label{CT}

Bowen showed that if \((X, f)\) is expansive and has the specification property, and if \(\varphi\) satisfies certain regularity conditions, then there is a unique equilibrium state for $\varphi$ (\cite{Bow74}). We recall definitions and results from \cite{CT16}, which demonstrate that non-uniform analogues of Bowen's hypotheses are sufficient to guarantee uniqueness.

\subsection{Expansivity}   
Given a continuous map \(f : X \to X\), the \emph{infinite Bowen ball} of size \(\epsilon > 0\) around a point \(x \in X\) is defined by
\[
\Gamma_\epsilon(x)
\;:=\;
\bigl\{\, y \in X : d(f^k x, f^k y) < \epsilon 
\text{ for all } k \in \mathbb{\N} \bigr\}.
\]
If there exists \(\epsilon > 0\) such that \(\Gamma_\epsilon(x) = \{x\}\) for every \(x \in X\), then we say that \((X, f)\) is \emph{positively expansive}. Throughout the rest of the paper we drop ``positively'' and simply write \emph{expansive}, since all maps under consideration are non-invertible.

\begin{definition}
For \(f : X \to X\), the set of \emph{non-expansive points} at scale \(\epsilon\) is defined by
\[
\mathrm{NE}(\epsilon)
\;=\;
\bigl\{\, x \in X : \Gamma_\epsilon(x) \neq \{x\} \bigr\}.
\]
An \(f\)-invariant measure \(\mu\) is called \emph{almost expansive} at scale \(\epsilon\) if \(\mu\bigl(\mathrm{NE}(\epsilon)\bigr) = 0\).

Given a potential \(\varphi\), the \emph{pressure of obstructions to expansivity} at scale \(\epsilon\) is
\begin{align*}
    P_{\mathrm{exp}}^{\perp}(\varphi,\epsilon)
    \;&=\;
    \sup_{\mu \in \mathcal{M}_e(f)}
    \Bigl\{
    h_\mu(f) + \int \varphi \, d\mu
    : \mu\bigl(\mathrm{NE}(\epsilon)\bigr) > 0
    \Bigr\}
\end{align*}

This quantity is \emph{non-increasing} in \(\epsilon\), so we define a scale-free version by
\[
P_{\mathrm{exp}}^{\perp}(\varphi)
\;=\;
\lim_{\epsilon \to 0}
P_{\mathrm{exp}}^{\perp}(\varphi,\epsilon).
\]
\end{definition}

In particular, the scale-free version
\(
P_{\exp}^{\perp}(\varphi) 
\)
satisfies
\[
P_{\exp}^{\perp}(\varphi) \ge P_{\exp}^{\perp}(\varphi,\epsilon)
\]
for any fixed \(\epsilon > 0\).

Roughly speaking, \( P^\perp_{\exp}(f,\epsilon) \) can be understood as follows. Recall that positive expansivity is equivalent to the condition
\[
\bigcap_{n \ge 0} B_n(x,\epsilon) = \{x\},
\]
which holds for every \( x \in X \) when \(\epsilon\) is sufficiently small. We define \( P^\perp_{\exp}(f,\epsilon) \) to be the supremum of the pressures of all ergodic measures that assign positive weight to the set of points where this condition fails.

\subsection{Specification}
Specification describes the property that different Bowen balls can be connected by an orbit segment with uniform gap. 

\begin{definition}[W–specification at scale $\epsilon$]
A collection of orbit segments 
\[
   \mathcal D \subset X\times\N
   \quad\bigl(\text{with } (x,n)\text{ representing the segment }
   x,\,f(x),\dots,f^{n-1}(x)\bigr)
\]
has the \emph{$(W)$-specification property at scale $\epsilon>0$} if there exists 
\(\tau=\tau(\epsilon)\in\N\) such that, for every finite set of segments
\[
  \{(x_j,n_j)\}_{j=1}^{k}\subset\mathcal D,
\]
one can find ``gluing times'' 
\(\tau_1,\dots,\tau_{k-1}\in\N\) with \(\tau_i\le \tau\) for every
\(i\) and a point \(y\in X\) satisfying
\[
   d_{n_j}\!\bigl(f^{t_{j}}(y),x_j\bigr)<\epsilon
   \quad\text{for each }1\le j\le k,
\]
where
\[
  t_1:=0,\qquad 
  t_j:=\sum_{i=1}^{j-1}\bigl(n_i+\tau_i\bigr)\quad (2\le j\le k).
\]
Here \(d_{n}(x,y)=\max_{0\le m<n}d\bigl(f^{m}x,f^{m}y\bigr)\)
is the Bowen metric of length \(n\).

We say that \(\mathcal{D}\) has the \emph{$(W)$-specification property} if the above condition holds for every \(\epsilon > 0\). We say that \((X, f)\) has the $(W)$-specification property if \(X \times \mathbb{N}\) does. 
\end{definition}

We use \emph{specification} as shorthand for $(W)$-specification, allowing transition times to vary subject to a uniform upper bound $\tau$. In discrete time, this is weaker than $(S)$-specification, which requires every transition time to equal $\tau$. We do not require the shadowing orbit to be periodic.

\begin{definition}
A triple \(\bigl(\mathcal{P}, \mathcal{G}, \mathcal{S}\bigr) \subset (X \times \mathbb{N})^3\) is called a \emph{decomposition} for \((X, f)\) if for every \(x \in X\) and \(n \in \mathbb{N}\), there exist \(p, g, s \in \mathbb{N}\) with \(p + g + s = n\) such that
\[
(x, p) \;\in\; \mathcal{P},
\quad
\bigl(f^p x, g\bigr) \;\in\; \mathcal{G},
\quad
\bigl(f^{p+g} x, s\bigr) \;\in\; \mathcal{S}.
\]
In other words, every orbit segment can be decomposed into a “prefix” from \(\mathcal{P}\), a “good” core from \(\mathcal{G}\), and a “suffix” from \(\mathcal{S}\).

\end{definition}

Note that the symbol $(x, 0)$ denotes the empty set, and the functions $p, g, s$ are permitted to take the value zero.

\begin{definition}
The \emph{pressure of obstructions to specification} at scale \(\epsilon\) is defined by

\[
P_{\mathrm{spec}}^{\perp}(\varphi, \epsilon) 
\;:=\; 
\inf \Bigl\{ 
  P\bigl(\mathcal{P} \cup \mathcal{S},\varphi, 3\epsilon\bigr)
\Bigr\}.
\]

where the infimum is taken over $\mathcal{P}, \mathcal{S} \subset X \times \N$ such that there exists a decomposition $(\mathcal{P},\mathcal{G},\mathcal{S}$) for $(X,f)$ for which every $\mathcal{G}$ has specification at scale $\epsilon$. We then define the scale-free version:
\[
P_{\mathrm{spec}}^{\perp}(\varphi)
\;=\;
\lim_{\epsilon \to 0}
P_{\mathrm{spec}}^{\perp}(\varphi, \epsilon).
\]
\end{definition}
Again, since \(P_{\mathrm{spec}}^{\perp}(\varphi, \epsilon)\) is monotonic as a function of \(\epsilon\), this limit exists. In contrast to \(P_{\mathrm{exp}}^{\perp}(\varphi, \epsilon)\), \(P_{\mathrm{spec}}^{\perp}(\varphi, \epsilon)\) is non-decreasing in \(\epsilon\).

\subsection{Regularity for Potential}
\begin{definition}[Bowen Property]
Given \(\mathcal{D} \subset X \times \mathbb{N}\), we say a function \(\varphi : X \to \mathbb{R}\) has the \emph{Bowen property} on \(\mathcal{D}\) at scale \(\epsilon\) if there is a constant \(K = K(\varphi, \mathcal{D}, \epsilon)\) such that
\[
\bigl| S_n \varphi(x) - S_n \varphi(y) \bigr| < K
\]
for every \((x, n) \in \mathcal{D}\) and every \(y \in B_n(x, \epsilon)\). A function \(\varphi\) has the \emph{Bowen property} on \(\mathcal{D}\) if it satisfies the Bowen property on \(\mathcal{D}\) at some scale \(\epsilon\) (and thus at all smaller scales as well).
\end{definition}

\subsection{General Results on Uniqueness of Equilibrium States}
Our main tool for existence and uniqueness of equilibrium state is the orbit decomposition appraoch from \cite{CT16}. 

\begin{theorem}[{\cite[Theorem 5.5 and Theorem 5.7]{CT16}}]\label{ct}
Let \(X\) be a compact metric space and \(f \colon X \to X\) a continuous map. Let 
\(\varphi \colon X \to \mathbb{R}\) be a continuous potential function. Suppose that 
\[
P_{\mathrm{exp}}^{\perp}(\varphi) \;<\; P_{\mathrm{top}}(\varphi),
\]
and that \((X, f)\) admits a decomposition \(\bigl(\mathcal{P}, \mathcal{G}, \mathcal{S}\bigr)\) with the following properties:

\begin{enumerate}
    \item \(\mathcal{G}\) has $(W)$-specification at any scale;
    \item \(\varphi\) has the Bowen property on \(\mathcal{G}\);
    \item \(P(\mathcal{P} \cup \mathcal{S}, \varphi)\;<\; P_{\mathrm{top}}(\varphi; f )\).
\end{enumerate}

Then there is a unique equilibrium state \(\mu\) for \(\varphi\). Moreover, \(\mu\) satisfies the following upper large deviations bound:
if \(A\) is any weak*-closed and convex subset of the space of Borel probability
measures on \(X\), then
\[
\limsup_{n\rightarrow\infty} \frac{1}{n} \log\mu\left(\mathcal{E}_n^{-1}(A)\right) \le \sup_{\nu\in A\cap \mathcal{M}(f)}\left(h_{\nu}(f)+ \int \varphi \,d\nu - P_{\mathrm{top}}(\varphi; f )\right),
\]
where \(\mathcal{E}_n(x) = \frac{1}{n}\sum_{k=0}^{n-1} \delta_{f^kx}\) is the \(n\)-th empirical measure associated to \(x\).
\end{theorem}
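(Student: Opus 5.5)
This is the Climenhaga--Thompson theorem, and the plan follows their argument. It has two parts: first build an equilibrium state $\mu$ with Gibbs-type bounds, then show any other equilibrium state must coincide with it.

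\emph{Step 1: pressure gap and choice of scale.} The hypotheses $P^\perp_{\exp}(\varphi)<P_{\mathrm{top}}(\varphi)$ and (3) are strict. So we fix $\epsilon>0$ small enough that $\varphi$ has the Bowen property on $\mathcal G$ at scale $\epsilon$, that $P^\perp_{\exp}(\varphi,\epsilon)<P_{\mathrm{top}}(\varphi)$, and that $P(\mathcal P\cup\mathcal S,\varphi,\epsilon)<P_{\mathrm{top}}(\varphi)$. We then pass to the collections $\mathcal G^M$ of segments whose prefix and suffix have length at most $M$. Specification of $\mathcal G$ at scale $\epsilon$ propagates to $\mathcal G^M$ at a slightly larger scale, with a gap depending on $M$. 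The Bowen property persists with a constant depending on $M$, since $\varphi$ is continuous.

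\emph{Step 2: counting estimates.} Write $\Lambda_n=\Lambda_n^{\mathrm{sep}}(X,\varphi,\epsilon)$ and $P=P_{\mathrm{top}}(\varphi)$.
\begin{itemize}
\item Specification on $\mathcal G$ together with the Bowen property gives, by concatenating separated sets, the almost-submultiplicative bound $\Lambda_{m+n}\gtrsim \Lambda_m(\mathcal G)\Lambda_n(\mathcal G)$.
\item The pressure gap gives that $\Lambda_n(\mathcal G^M)\ge \tfrac12\Lambda_n$ for $M$ large. This uses that $\sum_{p+s> M}e^{(p+s)(P(\mathcal P\cup\mathcal S)-P+\delta)}$ is small.
\item Combining the two, we obtain uniform counting $e^{nP}\le \Lambda_n\le Ce^{nP}$.
\end{itemize}

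\emph{Step 3: construction and Gibbs property.} Let $\mu$ be a weak$^*$ limit of the averaged measures $\frac1n\sum_{k<n}f^k_*\nu_n$. Here $\nu_n$ is the atomic measure on maximizing $(n,\epsilon)$-separated sets, weighted by $e^{S_n\varphi}$. This $\mu$ is an equilibrium state by the standard Misiurewicz argument, which uses the upper counting bound.
\begin{itemize}
\item The upper counting bound yields the global upper Gibbs bound $\mu(B_n(x,\epsilon))\le Ce^{S_n\varphi(x)-nP}$.
\item Specification on $\mathcal G^M$ yields the lower Gibbs bound $\mu(B_n(x,\rho))\ge c_M e^{S_n\varphi(x)-nP}$ for $(x,n)\in\mathcal G^M$.
\item Ergodicity of $\mu$ follows from the lower Gibbs bound and specification. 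For Bowen balls around good segments $U,V$, gluing gives $\mu(U\cap f^{-k}V)\gtrsim\mu(U)\mu(V)$ for many $k$.
\end{itemize}

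\emph{Step 4: uniqueness.} This is the main obstacle. Suppose $\nu\perp\mu$ is another ergodic equilibrium state. Since $\nu$ has pressure $P>P^\perp_{\exp}(\varphi,\epsilon)$, it is almost expansive. Hence partitions of small diameter are $\nu$-generating, and we may use adapted partitions $\mathcal A_n$ of $(n,\epsilon)$-separated type. Take $D_n$ with $\nu(D_n)\to1$ and $\mu(D_n)\to0$. The entropy identity $h_\nu+\int\varphi\,d\nu=P$ forces $\sum_{A\in\mathcal A_n,\,A\cap D_n\neq\emptyset}e^{\sup_A S_n\varphi}\gtrsim e^{nP}$. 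The pressure gap then shows that a definite proportion of this sum comes from elements meeting $\mathcal G^M$-segments. On those, the lower Gibbs bound gives $\mu(D_n)\ge c>0$, a contradiction. The delicate part is this transfer between adapted partition elements and Bowen balls of good segments.

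Finally, the large deviations bound follows from the upper Gibbs bound. We cover $\mathcal E_n^{-1}(A)$ by Bowen balls and compare the resulting separated-set sums with $\sup_{\nu\in A\cap\mathcal M(f)}\bigl(h_\nu+\int\varphi\,d\nu\bigr)$, via the Pfister--Sullivan-type variational upper bound for convex closed $A$.
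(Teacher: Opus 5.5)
The paper gives no argument of its own here; it quotes \cite{CT16}, and your outline follows the Climenhaga--Thompson strategy. However, Step 2 has a genuine gap, and Steps 3 and 4 rest on it: the uniform lower bound $\Lambda_n\ge e^{nP}$ at your fixed scale $\epsilon$ does not follow from your two bullets. The concatenation bound $\Lambda_{m+n+\tau}\gtrsim\Lambda_m(\mathcal G)\Lambda_n(\mathcal G)$ is \emph{super}multiplicative, so it only ever yields upper bounds (on $\Lambda_n(\mathcal G)$). Your second bullet also presupposes the lower bound. The tail $\sum_{p+s>M}\Lambda_p(\mathcal P)\Lambda_g(\mathcal G)\Lambda_s(\mathcal S)\le\eta_M e^{nP}$, with $\eta_M\to0$, is at most $\tfrac12\Lambda_n$ only once you already know $\Lambda_n\ge c\,e^{nP}$ for every $n$.

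Two ingredients are missing. First, the full pressure must be visible at the working scale, $P(\varphi,\epsilon)=P_{\mathrm{top}}(\varphi)$, after shrinking $\epsilon$ if necessary. This is a second, indispensable use of $P^{\perp}_{\mathrm{exp}}(\varphi,\epsilon)<P_{\mathrm{top}}(\varphi)$: every ergodic measure of larger free energy is almost expansive at scale $\epsilon$, so its entropy is already detected at a fixed scale comparable to $\epsilon$. Specification, the Bowen property and the gap for $\mathcal P\cup\mathcal S$ cannot replace this. Take $\varphi=0$ on the product of two full shifts, with the second factor's metric rescaled to have diameter $<\epsilon$, and $\mathcal G=X\times\N$. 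All three hold at scale $\epsilon$, yet $P(0,\epsilon)=\log 2<\log 4=h_{\mathrm{top}}$ (there $\mathrm{NE}(\epsilon)=X$).

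Second, to get the bound for every $n$ rather than along a subsequence, you need a submultiplicativity argument, e.g.\ for sums over covers by Bowen balls weighted by $\sup_{B_n(x,\epsilon)}S_n\varphi$. You then use the decomposition and the Bowen property on $\mathcal G$ to return to point values. The same omission affects Step 3: the Misiurewicz argument only gives $h_\mu(f)+\int\varphi\,d\mu\ge P(\varphi,\epsilon)$. So $\mu$ is an equilibrium state because of the lower counting bound, not the upper one.

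Two further claims need repair.

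Your argument yields the global upper Gibbs bound only with $\sup_{y\in B_n(x,\epsilon)}S_n\varphi(y)$ in place of $S_n\varphi(x)$, since $\varphi$ is Bowen only on $\mathcal G$. In the large deviations step this costs a factor $e^{n\,\mathrm{Var}(\varphi,\epsilon)}$, where $\mathrm{Var}(\varphi,\epsilon)=\sup\{|\varphi(x)-\varphi(y)|:d(x,y)<\epsilon\}$. You remove it by running the construction at arbitrarily small scales, all of which give the same $\mu$ by uniqueness.

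The passage from specification on $\mathcal G$ to specification on $\mathcal G^M$, and hence to the lower Gibbs bound there, is justified only for suffixes, by forward uniform continuity. Shadowing a prefix of length $p\le M$ needs uniform continuity of $f^{-1},\dots,f^{-M}$, and for non-invertible $f$ this really fails. Suppose $q\notin f(X)$ is an isolated point and $(fq,n-1)\in\mathcal G$ with prefix $(q,1)\in\mathcal P$. Then $(q,n)\in\mathcal G^1$, while $\mu(B_n(q,\rho))=\mu(\{q\})=0$ for small $\rho$. So, as written, your proof needs either invertibility or specification of every $\mathcal G^M$ as a hypothesis. In this paper's application $\mathcal P$ is trivial, so only suffixes occur and this step is harmless there.
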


We note that the theorem we apply assumes that the pressures of obstructions to both expansivity and specification are defined in a scale‐free manner.

\section{Non-uniformly Expanding Endomorphisms with Critical Set}
In this section, we introduce the notion of non-uniform expansion in the presence of a non-flat critical set. This notion was first formulated by Alves, Bonatti, and Viana in their seminal work \cite{ABV00}, where the main goal was to develop a framework for studying the existence of SRB measures under non-uniform expansion. One of their primary motivations came from Viana maps constructed earlier by Viana \cite{Via97}. The techniques introduced in \cite{ABV00}, however, have proved useful in much greater generality. Our presentation follows their framework in spirit, but the formulation is adapted to the partially hyperbolic setting considered in this paper.

\subsection{Non-flat Critical Set}
Let us explain first what we mean by critical set of a map. The critical set $\mathcal{C}$ may be thought of as containing those points $x$ where $Df(x)$ is non-invertible. 

Let $M$ be a compact manifold, $\mathcal{C} \subset M$ a compact subset with zero Lebesgue measure, and $f: M\setminus \mathcal{C} \rightarrow M$ a $C^2$ map on $M\setminus \mathcal{C}$. Recall in our partially hyperbolic setting we also assume that there is a continuous $Df$-invariant splitting $E^u \oplus E^c$ on $M$ and a continuous cone field $\mathrm{Cone}^c = \{\mathrm{Cone}^c(x)\}_{x\in M}$ around $E^c$ such that $\mathrm{Cone}^c(x)$ contains the critical (center) direction at points of $\mathcal{C}$.

\begin{definition} \label{nonflat}
    A critical set $\mathcal{C}$ of $f$ is said to be \emph{non-flat along the critical direction} if, in the critical direction, $f$ \emph{behaves like a power of the distance} to $\mathcal{C}$ close to the critical set $\mathcal{C}$, in the following sense: there exist constants $B >1$ and $\beta >0$ such that 
\begin{itemize}
        \item[(S1)] \quad $\frac{1}{B}\, \mathrm{dist}(x, \mathcal{C})^{\beta} \leq \frac{\|Df(x)v\|}{\|v\|} \leq B\, \mathrm{dist}(x, \mathcal{C})^{\beta};$
        \item[(S2)] \quad $| \log \|Df (x)^{-1}\| - \log \|Df (y)^{-1}\|| \leq B \frac{\mathrm{dist}(x,y)}{\mathrm{dist}(x, \mathcal{C})^{\beta}} ;$
        \item[(S3)] \quad $| \log |\det Df (x)^{-1}| - \log |\det Df (y)^{-1}|| \leq B \frac{\mathrm{dist}(x,y)}{\mathrm{dist}(x, \mathcal{C})^{\beta}} ;$
\end{itemize}
for every $v \in \mathrm{Cone}^c(x)\setminus\{0\}$ and $x,\, y \in M\setminus \mathcal{C}$ with $\dist(x,y)<\dist(x,\mathcal{C})/2$. 
\end{definition}

We refer to this kind of critical sets \emph{non-flat} since conditions (S1) to (S3) above are natural generalizations to arbitrary dimensions of the notion of non-flat critical point from one-dimensional dynamics.

Given $\delta > 0$ and $x \in M\setminus \mathcal{C}$ we define the \emph{$\delta$-truncated distance} from $x$ to $\mathcal{C}$
    \[\dist_{\delta}(x,\mathcal{C}) = 
    \begin{cases}
         1, &\quad \text{if \(dist(x,\mathcal{C}) \geq \delta\)},\\
         dist(x,\mathcal{C}), &\quad \text{otherwise}
    \end{cases};\]
Note that this is not really a distance function: $\dist(x,y)+\dist_{\delta}(y,\mathcal{C})$ may
be smaller than $\dist_{\delta}(x,\mathcal{C})$. We also denote $\mathcal{C}_{\infty}=\bigcup_{n=0}^{\infty}f^n(\mathcal{C})$.

\begin{definition}
A map $f$ is said to be \emph{non-uniformly expanding with slow recurrence to critical set $\mathcal{C}$} if $f$ satisfies (S1), (S2) and (S3), and the following conditions:
\begin{itemize}
    \item \textbf{Non-uniformly Expanding}: there exists $c > 0$
    \begin{equation}
    \limsup_{n\to \infty}\frac{1}{n}\sum^{n-1}_{j=0} \log \|D f(f^j(x))^{-1}\| \le -c
    \end{equation}
    for all $x$ in a positive Lebesgue measure set $H \subset M\setminus \mathcal{C}$,
    
    \item \textbf{Slow Recurrence to critical Set}: given any $\gamma>0$ there exists $\delta>0$ such that for every $x \in H$
    \begin{equation}
    \limsup_{n\rightarrow \infty} \frac{1}{n}\sum_{j=0}^{n-1}-\log(\dist_{\delta}(f^j(x),\mathcal{C})) \leq \gamma
    \end{equation}
\end{itemize} 
\end{definition}

\subsection{Hyperbolic Times}
In the presence of a critical set $\mathcal{C}$, we must ensure that iterates $f^j(x)$ with $0 \le j < n$ are not too close to $\mathcal{C}$.

Let $B >1$ and $\beta>0$ be as in the hypotheses (S1), (S2), (S3). In what follows $b$ is any fixed constant such that $0 <b <\min\{\frac{1}{2},\, \frac{1}{2\beta}\}$.

The following notion will allow us to derive uniform behavior (expansion, distortion) from the hypothesis of non-uniform expansion and slow recurrence to critical set. 
\begin{definition}\label{hyp times}
Given $0 < \sigma < 1$ and $\delta > 0$, we say that $n$ is a $(\sigma,\delta)$-hyperbolic time for $x\in M\setminus\mathcal{C}$ if, for all $1 \leq k \leq n$,
$$\prod_{j=n-k}^{n-1}\|Df(f^j(x))^{-1}\| \leq \sigma^k$$ 
and 
$$\dist_{\delta}(f^{n-k}(x),\mathcal{C}) \geq \sigma^{bk}$$
\end{definition}

Hyperbolic times are useful because they have the following key property:
\begin{lemma}{\cite[Lemma 5.2]{ABV00}} \label{hyp_times_prop}
     Given $\sigma < 1$ and $\delta > 0$, there exists $\delta_1 > 0$ such that if $n$ is a $(\sigma,\delta)$-hyperbolic time of $x\in M\setminus\mathcal{C}$, then there exists a neighborhood $V_x$ of $x$ such that:
    \begin{itemize}
        \item $f^n$ maps $V_x$ diffeomorphically onto the ball $B_{\delta_1}(f^n(x))$;
        \item For every $1\leq k < n$ and $y,z\in V_x$, $$\dist(f^{n-k}(y), f^{n-k}(z)) \leq \sigma^{k/2} \dist(f^n(y), f^n(z))$$.
    \end{itemize}
\end{lemma}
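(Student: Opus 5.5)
We prove \cref{hyp_times_prop} by pulling back the ball $B_{\delta_1}(f^n(x))$ one step at a time along the orbit, using inverse branches of $f$. The hyperbolic-time condition gives two things at each step. First, the backward contraction accumulated over the last $k$ steps is at most $\sigma^k$. Second, the point $f^{n-k}(x)$ is at distance at least $\sigma^{bk}$ from $\mathcal C$, or at least $\delta$ away. The non-flatness condition (S2) then says that $\log\|Df^{-1}\|$ varies by at most $B\,\dist(y,z)/\dist(f^{n-k}x,\mathcal C)^{\beta}$ on a ball of radius less than half that distance. So the plan is to show by induction on $k$ that the pulled-back sets stay inside balls where $Df$ is invertible with controlled norm.

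Concretely, fix $\delta_1>0$ small, depending on $\sigma,\delta,B,\beta$. The inductive claim is as follows: for each $1\le k\le n$ there is a connected neighborhood $V_k$ of $f^{n-k}(x)$ with these properties.
\begin{itemize}
\item $f^k$ maps $V_k$ diffeomorphically onto $B_{\delta_1}(f^n x)$.
\item $V_k$ lies in the ball of radius $\sigma^{k/2}\delta_1$ about $f^{n-k}(x)$.
\item For $y\in V_k$ we have $\prod_{j=0}^{k-1}\|Df(f^{j}y)^{-1}\|\le \sigma^{k/2}$, the product running along the orbit of $y$.
\end{itemize}

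For the inductive step, consider $V_k$ sitting in a ball of radius $\sigma^{k/2}\delta_1$ around $f^{n-k}x$. We take the local inverse branch of $f$ through $f^{n-k-1}x$. This branch exists because $f^{n-k-1}x\notin\mathcal C$ and $f$ is a local diffeomorphism off $\mathcal C$; the size of the domain of this branch is controlled by $\dist(f^{n-k-1}x,\mathcal C)\ge\sigma^{b(k+1)}$, or by $\delta$.

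The step needs two estimates. The first is that the radius of the new piece, about $\sigma^{(k+1)/2}\delta_1$, is less than $\tfrac12\sigma^{b(k+1)}$. This holds because $b<1/2$, once $\delta_1$ is small enough. The second is that the distortion of $\log\|Df^{-1}\|$ on the new piece is at most
\[B\sigma^{(k+1)/2}\delta_1\,\sigma^{-b\beta(k+1)}.\]
Since $b\beta<1/2$, these distortion terms are summable geometrically in $k$, and the total is small once $\delta_1$ is small.

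Summing the distortions gives
\[\prod_j\|Df(f^jy)^{-1}\|\le e^{C\delta_1}\sigma^{k}\le\sigma^{k/2}\]
for all $y$ in the piece. This closes the induction and also shows that the pullback stays within the claimed radius, via the mean value inequality along curves inside the piece. Finally, set $V_x=V_n$. The contraction estimate $\dist(f^{n-k}y,f^{n-k}z)\le\sigma^{k/2}\dist(f^ny,f^nz)$ follows by integrating $\|D(f^{k})^{-1}\|$ along the preimage under the inverse branch of the geodesic joining $f^ny$ to $f^nz$. That geodesic lies in the ball $B_{\delta_1}(f^nx)$, which we take geodesically convex by choosing $\delta_1$ below the convexity radius.

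The main obstacle is keeping the bookkeeping consistent. The smallness of $\delta_1$ must be uniform in $n$, which is exactly what the geometric summability coming from $b<\min\{1/2,1/(2\beta)\}$ provides. We must also ensure that the inverse branch at each step is well defined on the whole current piece. This requires the piece to avoid $\mathcal C$, which follows from its radius being less than half the distance to $\mathcal C$. When $\dist(f^{n-k}x,\mathcal C)\ge\delta$, we use instead a uniform lower bound on the size of inverse branches and a uniform Lipschitz bound on $\log\|Df^{-1}\|$ on the set where $\dist(\cdot,\mathcal C)\ge\delta/2$.
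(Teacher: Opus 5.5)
Your proposal is correct and is essentially the argument the paper sketches (deferring to \cite[Lemma 5.2]{ABV00}): by induction on $k$ you build a branch of $f^{-k}$ on $B_{\delta_1}(f^n(x))$ sending $f^n(x)$ to $f^{n-k}(x)$ that is a $\sigma^{k/2}$-contraction, using $b<1/2$ to keep each pulled-back piece within half the distance to $\mathcal C$ and $b\beta<1/2$ to make the (S2) distortion terms geometrically summable, so that $\delta_1$ is uniform in $n$. The one point to state more carefully is why the inverse branch is defined on all of $V_k$: a one-step bound on the domain of a local inverse of $f$, using only $\dist(f^{n-k-1}x,\mathcal C)$, is too weak for the allowed range of $b$, so instead you continue lifts of radial segments from $f^n(x)$, and the accumulated bound $e^{C\delta_1}\sigma^{k+1}\le\sigma^{(k+1)/2}$ keeps the lift inside $B(f^{n-k-1}x,\sigma^{(k+1)/2}\delta_1)$ and hence away from $\mathcal C$.
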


\begin{proof}
    We only outline the proof strategy here and refer readers to \cite[Lemma 5.2]{ABV00} for details. The idea is to prove that, by induction on $j \ge 1$, if $\delta_1$ is chosen small enough then there exists a well defined branch of $f^{-j}$ on the ball of radius $\delta_1$ around $f^n(x)$, mapping $f^n(x)$ to $f^{n-j} (x)$. In addition, this branch is a $\sigma^{j/2}$-contraction.
\end{proof}

A key point is that if $n$ is a hyperbolic time for a point $x$ then there exists a neighborhood $V_x$ of $x$ which is mapped onto a ball of fixed radius around $f^n(x)$ diffeomorphically and with uniformly bounded distortion.

In our setting, the non-flatness assumptions are verified only along the center direction, namely on a \(Df\)-invariant cone field \(\mathrm{Cone}^c\) around \(E^c\). However, the hyperbolic-time machinery in \cite{ABV00} is formulated using the full operator norm \(\|Df^{-1}\|\). Thus, in order to apply \cite[Lemma 5.2]{ABV00}, we must relate the full inverse norm to the inverse norm restricted to the center cone. The next lemma provides this comparison: since \(E^u\) is uniformly expanding and the splitting \(E^u\oplus E^c\) is dominated, the largest inverse expansion is realized, up to a uniform constant, along the center cone.

\begin{lemma}\label{cone-full-norm}
Assume that \(f|_\Lambda\) is partially hyperbolic with dominated splitting: \(
T_\Lambda M=E^u\oplus E^c\), \(\dim E^c=1,\) and let \(\mathrm{Cone}^c\) be a \(Df\)-invariant cone field around \(E^c\). Then there exists \(C\ge 1\) such that for every \(x\in \Lambda\setminus \mathcal C\),
\[
\sup_{v\in \mathrm{Cone}^c(x),\,\|v\|=1}\|Df(x)^{-1}v\|
\;\le\;
\|Df(x)^{-1}\|
\;\le\;
C\,\sup_{v\in \mathrm{Cone}^c(x),\,\|v\|=1}\|Df(x)^{-1}v\|.
\]
In particular, the notion of \((\sigma,\delta)\)-hyperbolic time defined using the full operator norm \(\|Df^{-1}\|\) is equivalent, up to adjusting \(\sigma\), to the one defined using the center cone field.
\end{lemma}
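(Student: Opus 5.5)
The left inequality is immediate, since the operator norm is a supremum over all unit vectors and the center-cone vectors are a subset. So the work is the right inequality. My plan is to compute \(\|Df(x)^{-1}\|\) through the splitting. I write \(A=Df(x)\), which is invertible for \(x\notin\mathcal C\). By \(Df\)-invariance of \(E^u\oplus E^c\), \(A\) maps \(E^u(x)\) onto \(E^u(fx)\) and \(E^c(x)\) onto \(E^c(fx)\). Hence \(A^{-1}\) respects the splitting at \(fx\).

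Any unit vector \(w\in T_{fx}M\) can be written \(w=w^u+w^c\). By continuity of the splitting on the compact set \(\Lambda\), the angle between \(E^u\) and \(E^c\) is uniformly bounded away from zero. This gives a constant \(C_1\) with \(\|w^u\|,\|w^c\|\le C_1\|w\|\). Then
\[
\|A^{-1}w\|\le C_1\bigl(\|A^{-1}|_{E^u}\|+\|A^{-1}|_{E^c}\|\bigr).
\]

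The key step is to compare the unstable term with the center term. Uniform expansion of \(E^u\) (partial hyperbolicity) gives \(\|A^{-1}|_{E^u}\|\le\lambda^{-1}<1\). I would like domination in the form \(\|A|_{E^c}\|\,\|A^{-1}|_{E^u}\|\le\theta<1\). Since \(\dim E^c=1\), this yields \(\|A^{-1}|_{E^u}\|\le\theta\,\|A^{-1}|_{E^c}\|\).

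The main obstacle is that at points near \(\mathcal C\) the derivative is badly behaved, and domination might a priori be stated only for one iterate on \(\Lambda\). I would handle this in one of two ways:
\begin{itemize}
\item use the domination as formulated for \(f|_\Lambda\), if it holds pointwise for one iterate, possibly after passing to an adapted metric; or
\item argue directly, without domination: \(\|A^{-1}|_{E^u}\|\le\lambda^{-1}\), while \(\|A|_{E^c}\|\le \sup_M\|Df\|=:K<\infty\) by compactness, which gives \(\|A^{-1}|_{E^c}\|\ge K^{-1}\).
\end{itemize}
Either way, \(\|A^{-1}|_{E^u}\|\le K\lambda^{-1}\|A^{-1}|_{E^c}\|\).

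Combining the two steps gives
\[
\|A^{-1}\|\le C_1(1+K\lambda^{-1})\,\|A^{-1}|_{E^c(fx)}\|.
\]
Since \(E^c\) lies inside the cone, this is in turn bounded by the supremum over unit vectors of \(\mathrm{Cone}^c\), which gives \(C\).

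One point needs care. The supremum in the statement is over \(v\in\mathrm{Cone}^c(x)\) applied to \(Df(x)^{-1}\), so I read it as the backward cone at \(f(x)\) containing \(E^c(fx)\). I would note that this uses \(A^{-1}E^c(fx)=E^c(x)\).

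For the final claim, the products \(\prod_j\|Df(f^jx)^{-1}\|\) for the two norms differ by at most a factor \(C^k\) over \(k\) terms. A center-cone hyperbolic time with constant \(\sigma\) is therefore a full-norm hyperbolic time with constant \(C\sigma\), provided \(C\sigma<1\). Hyperbolic times for the full norm are trivially center ones. In this sense the two notions are equivalent up to adjusting \(\sigma\).
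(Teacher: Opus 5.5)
Your proof of the two displayed inequalities follows the paper's argument. You split a unit vector along \(E^u\oplus E^c\) using the uniformly bounded angle, bound \(\|Df^{-1}|_{E^u}\|\) by a constant multiple of \(\|Df^{-1}|_{E^c}\|\), and then use \(E^c\subset\mathrm{Cone}^c\). You are more careful than the paper in decomposing at \(f(x)\), where \(Df(x)^{-1}\) actually acts; the paper writes everything at \(x\). Your second option is a real simplification. Since \(\|Df(x)^{-1}|_{E^c(fx)}\|\ge \|Df(x)|_{E^c(x)}\|^{-1}\ge K^{-1}\), uniform expansion of \(E^u\) together with \(\sup\|Df\|<\infty\) already gives the comparison. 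So domination is not needed for this lemma.

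The one step that fails as written is in your last paragraph. The definition of a \((\sigma,\delta)\)-hyperbolic time uses the same \(\sigma\) in the recurrence condition. A center-cone \((\sigma,\delta)\)-hyperbolic time therefore only gives \(\dist_\delta(f^{n-k}x,\mathcal C)\ge\sigma^{bk}\). A full-norm \((C\sigma,\delta)\)-hyperbolic time instead requires \(\dist_\delta(f^{n-k}x,\mathcal C)\ge(C\sigma)^{bk}\), which is strictly stronger when \(C>1\). The product estimate transfers, since \(\prod\|Df^{-1}\|\le (C\sigma)^k\), but the distance estimate does not.

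To repair this you must also adjust \(b\). For example, replace it by \(b'=b\log\sigma/\log(C\sigma)>b\), so that \((C\sigma)^{b'}=\sigma^{b}\), and check that \(b'<\min\{1/2,1/(2\beta)\}\); this can be arranged by taking \(b\) smaller. Alternatively, decouple the constants in the two conditions. Combined with your caveat \(C\sigma<1\), this shows that ``equivalence up to adjusting \(\sigma\)'' is really a one-sided transfer with a loss in both \(\sigma\) and \(b\), not a literal equivalence.
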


\begin{proof}
The first inequality is immediate, since \(\mathrm{Cone}^c(x)\) is a subset of the unit tangent space at \(x\):
\[
\sup_{v\in \mathrm{Cone}^c(x), \|v\|=1}\|Df(x)^{-1}v\|
\;\le\;
\|Df(x)^{-1}\|.
\]

For the converse inequality, write any unit vector \(w\in T_xM\) as
\[
w = w^u + w^c,
\qquad
w^u\in E^u(x),\; w^c\in E^c(x).
\]
Since the angle between \(E^u\) and \(E^c\) is uniformly bounded away from zero on \(\Lambda\), there exists \(K\ge1\) such that \(\|w^u\|+\|w^c\| \le K\) for every unit vector $w$. Hence
\[
\|Df(x)^{-1}w\|
\le
\|Df(x)^{-1}w^u\|+\|Df(x)^{-1}w^c\|
\le
K\Bigl(\|Df(x)^{-1}|_{E^u(x)}\|+\|Df(x)^{-1}|_{E^c(x)}\|\Bigr).
\]

By dominated splitting, there exists \(C_0\ge1\) such that \(\|Df(x)^{-1}|_{E^u(x)}\|\le C_0\,\|Df(x)^{-1}|_{E^c(x)}\|\) for every $x$. Therefore
\[
\|Df(x)^{-1}w\|
\le
K(C_0+1)\,\|Df(x)^{-1}|_{E^c(x)}\|.
\]
Since \(E^c(x)\subset \mathrm{Cone}^c(x)\), we have
\[
\|Df(x)^{-1}|_{E^c(x)}\|
\le
\sup_{v\in \mathrm{Cone}^c(x), \|v\|=1}\|Df(x)^{-1}v\|.
\]
Combining the previous estimates gives
\[
\|Df(x)^{-1}w\|
\le
K(C_0+1)\sup_{v\in \mathrm{Cone}^c(x), \|v\|=1}\|Df(x)^{-1}v\|.
\]
Taking the supremum over all unit vectors \(w\in T_xM\) yields
\[
\|Df(x)^{-1}\|
\le
C\,\sup_{v\in \mathrm{Cone}^c(x), \|v\|=1}\|Df(x)^{-1}v\|,
\]
where \(C=K(C_0+1)\). This proves the claim.
\end{proof}

In view of this equivalence, we henceforth use the term \emph{hyperbolic times} to refer to hyperbolic times relative to the center cone field.

\begin{definition}\label{expanding_set}
    For a fixed $\sigma<1$, we define the $\sigma$-expanding set $H(\sigma)$ to be the set of points $x\in M\setminus\mathcal{C}$ with the following two properties:
    \begin{equation}\label{non-unif. expand}
        \limsup_{n\rightarrow \infty} \frac{1}{n}\sum_{j=0}^{n-1}\log\|Df(f^j(x))^{-1}\| \leq 3\log\sigma < 0
    \end{equation}
    and, for any $\gamma > 0$ there is some $\delta > 0$ satisfying
    \begin{equation}\label{slow rec}
    \limsup_{n\rightarrow \infty} \frac{1}{n}\sum_{j=0}^{n-1}-\log(dist_{\delta}(f^j(x),\mathcal{C})) \leq \gamma
    \end{equation}
\end{definition}
The following lemma, due to Pliss \cite{Pli72}, is the key to prove that a point $x$ satisfying assumption \eqref{non-unif. expand} and \eqref{slow rec} has many (positive density at infinity) hyperbolic times.

\begin{lemma}[Pliss Lemma]\label{pliss}
    Given $A \ge c_2 >c_1 >0$, let $\theta_0 =(c_2-c_1)/(A-c_1)$. Then, given any real numbers $a_1,\cdots,a_N$ such that
    \[\sum_{j=1}^N a_j \ge c_2 N \quad \text{and}\quad a_j \le A \text{ for every } 1 \le j \le N,\]
there are $l >\theta_0 N$ and $1 <n_1 <\cdots<n_l \le N$ so that
\[\sum_{j=n+1}^{n_i}a_j \ge c_1(n_i-n) \quad\text{for every }0 \le n <n_i  \text{ and  }i=1,\cdots,l.\]
\end{lemma}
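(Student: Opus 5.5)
The natural route is a maximal-point (``rising sun'') argument on partial sums. I would run it on the shifted sequence $b_j = a_j - c_1$. Put $B_0=0$ and $B_k=\sum_{j=1}^k b_j$, so that $B_N \ge (c_2-c_1)N$ and $b_j \le A-c_1$ for every $j$. The required conclusion at an index $n_i$ says exactly that
\[
\sum_{j=n+1}^{n_i} b_j \ge 0 \quad\text{for every } 0\le n<n_i,
\]
that is, $B_{n_i}\ge B_n$ for all $n<n_i$. So I call an index $k\in\{1,\dots,N\}$ \emph{good} if $B_k\ge \max_{0\le n<k}B_n$. I would take $n_1<\dots<n_l$ to be all the good indices. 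The whole task then reduces to counting them: I need $l>\theta_0 N$.

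For the count, list the good indices in increasing order, and set $n_0:=0$ for bookkeeping. Let $M_k=\max_{0\le n\le k}B_n$. This running maximum only increases at good indices. More precisely, if $k$ is not good then $M_k=M_{k-1}$. If $k$ is good then
\[
M_k - M_{k-1} = B_k - M_{k-1} \le B_k - B_{k-1} = b_k \le A - c_1 .
\]
Summing these increments gives $M_N - M_0 \le l\,(A-c_1)$. On the other hand $M_0=0$ and $M_N\ge B_N\ge (c_2-c_1)N$. Combining,
\[
l \ \ge\ \frac{(c_2-c_1)N}{A-c_1} \;=\; \theta_0 N .
\]

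The remaining issues are the strict inequality $l>\theta_0N$ and the requirement $1<n_1$ in the statement. I expect these to be the only delicate points, since the statement as written is slightly sharper than the clean argument gives. For strictness, I would note that equality forces every good index to have increment exactly $A-c_1$. It also forces $M_N=B_N=(c_2-c_1)N$, which pins the sequence down very rigidly. The standard remedy, as in \cite{ABV00} and \cite{Pli72}, is to accept $l\ge\theta_0N$, or to perturb $c_2$ slightly downward, which only affects the application through a change of constants. For the index requirement, I would read it as $1\le n_1$, since $n_1=1$ can genuinely occur when $b_1\ge0$; again, this harmless convention is the one used in applications. With these conventions the proof is complete and self-contained. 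Its only inputs are the two hypotheses: the upper bound $a_j\le A$ controls each jump of the running maximum, and the lower bound on the total sum controls the final height.
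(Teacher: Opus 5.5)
The paper does not prove this lemma; it only quotes it from \cite{Pli72}. Your running-maximum argument is the standard proof, and its core is correct. The indices satisfying the conclusion are exactly those with $B_k\ge M_{k-1}$. The running maximum increases only at these indices, by at most $A-c_1$ each time. Combined with $M_N\ge B_N\ge (c_2-c_1)N$, this gives $l\ge\theta_0 N$ with $1\le n_1$.

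Your last paragraph should be sharpened, though. The two discrepancies are not matters of convention: the statement as printed is false. If $1<n_1$ is required, take $c_1=1$, $c_2=2$, $A=3$, $N=2$, $a_1=a_2=2$. Then $\theta_0N=1$, so two indices are needed, but only $n_1=2$ is available. If $A=c_2$, the hypotheses force $a_j=c_2$ for all $j$ and $\theta_0=1$, so $l>N$ is impossible.

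On the other hand, you stop the equality analysis just before it pays off. Suppose $l(A-c_1)=(c_2-c_1)N$. Then $M_N=B_N$, so $N$ is good. For each good $k$, the chain $M_k-M_{k-1}=B_k-M_{k-1}\le B_k-B_{k-1}=b_k\le A-c_1$ must consist of equalities. This forces $B_{k-1}=M_{k-1}$, so $k-1$ is good or $k-1=0$, and it forces $b_k=A-c_1$. Descending from $N$, every index is good, so $l=N$ and $N(A-c_1)=B_N=(c_2-c_1)N$, i.e.\ $A=c_2$.

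Thus for $A>c_2$ your argument already gives $l>\theta_0N$. The correct statement is: $1\le n_1$ and $l\ge\theta_0N$, with strict inequality whenever $A>c_2$. No perturbation is needed. In any case, lowering $c_2$ would only decrease $\theta_0$ and could never produce strictness. The application in \cref{lem:infty_hyp_time} only uses $l\ge\nu N$ for some $\nu>0$, so this corrected version suffices.
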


The next lemma asserts that points in $H(\sigma)$ have many $(\sigma,\delta)$-hyperbolic times.

\begin{lemma}\label{lem:infty_hyp_time} 
    Given $\sigma < 1$, there exist $\nu > 0$ and $\delta > 0$ depending only on $\sigma$ and $f$ such that, given any $x \in H(\sigma)$ and $N \geq 1$ sufficiently large, there are $(\sigma,\delta)$-hyperbolic times $1\leq n_1 <\cdots <n_l \leq N$ for $x$ with $l\geq \nu N$.
\end{lemma}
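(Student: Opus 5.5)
This is the standard argument of \cite[Lemma 5.4]{ABV00}: apply the Pliss Lemma (\cref{pliss}) twice, once to the expansion sequence and once to the recurrence sequence, and then intersect the two sets of Pliss times. Throughout, the norms are taken relative to the center cone, which is harmless by \cref{cone-full-norm}.

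\textbf{Step 1: expansion times.} Fix \(x\in H(\sigma)\). By \eqref{non-unif. expand}, for \(N\) large we have \(\sum_{j=0}^{N-1}\log\|Df(f^jx)^{-1}\|\le 2\log\sigma\cdot N\). We cannot apply Pliss directly to \(-\log\|Df^{-1}\|\), because it is unbounded above near \(\mathcal C\). So we first choose \(\rho>0\) small and truncate. Set
\[
a_j=-\log\|Df(f^{j-1}x)^{-1}\|
\]
when \(\dist(f^{j-1}x,\mathcal C)\ge\rho\), and \(a_j=0\) otherwise.

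The truncation costs little. By (S1), \(-\log\|Df(y)^{-1}\|\le \beta|\log\dist(y,\mathcal C)|+\log B\) near \(\mathcal C\). By the slow recurrence \eqref{slow rec} with \(\delta\le\rho\), the total contribution of the indices with \(\dist(f^{j-1}x,\mathcal C)<\rho\) is at most \((\beta\gamma+\log B\cdot\text{density})N\). This quantity can be made at most \(-\tfrac12\log\sigma\, N\) by taking \(\gamma\) small. Hence \(\sum a_j\ge c_2N\) with \(c_2=-\tfrac32\log\sigma\), and \(a_j\le A\), where \(A\) is a uniform bound for \(-\log\|Df^{-1}\|\) away from the \(\rho\)-neighborhood of \(\mathcal C\).

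Apply \cref{pliss} with \(c_1=-\log\sigma\). We obtain at least \(\theta_1N\) times \(n_i\) such that
\[
\sum_{j=n_i-k}^{n_i-1}\log\|Df(f^jx)^{-1}\|\le k\log\sigma
\]
for all \(k\). The truncated terms only help here, since \(a_j\) is then at most the true value. This gives the first condition of \cref{hyp times}.

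\textbf{Step 2: recurrence times.} Choose \(\gamma\) small relative to \(b|\log\sigma|\) and \(\theta_1\), and let \(\delta\) be the corresponding constant from \eqref{slow rec}. Apply \cref{pliss} to
\[
a_j=\log\dist_\delta(f^{j-1}x,\mathcal C)\le 0 =: A,
\]
with \(c_2=-\gamma'\) (so \(\sum a_j\ge c_2N\) for large \(N\)) and \(c_1=b\log\sigma\). This is legitimate after shifting every \(a_j\) by a constant so that \(A\ge c_2>c_1\).

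The proportion guaranteed by \cref{pliss} is \(\theta_2=(c_2-c_1)/(A-c_1)=1-\gamma'/(b|\log\sigma|)\). By choosing \(\gamma\) small we make \(\theta_2>1-\theta_1/2\). The resulting times \(m\) satisfy \(\sum_{j=m-k}^{m-1}\log\dist_\delta(f^jx,\mathcal C)\ge kb\log\sigma\), and in particular each single term satisfies \(\dist_\delta(f^{m-k}x,\mathcal C)\ge\sigma^{bk}\). That is the second condition of \cref{hyp times}.

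\textbf{Step 3: combining, and the main obstacle.} The two sets of times have densities \(\theta_1\) and \(\theta_2\) with \(\theta_1+\theta_2>1+\theta_1/2\). Their intersection therefore has at least \(\tfrac{\theta_1}{2}N\) elements, and each element is a \((\sigma,\delta)\)-hyperbolic time. Taking \(\nu=\theta_1/2\) finishes the proof.

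The main obstacle is the order of the choices. The constant \(\theta_1\) must not depend on \(\delta\); it depends only on \(\rho\), \(A\) and \(\sigma\). Only after \(\theta_1\) is fixed do we pick \(\gamma\), and then \(\delta=\delta(\gamma)\). We also need \(\delta\le\rho\) for the truncation estimate in Step 1, and this can be arranged by shrinking \(\delta\), since \(\dist_\delta\) only increases the slow-recurrence bound monotonically in the right direction.

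A further concern is that "\(N\) sufficiently large" depends on \(x\) through the limsups. That is acceptable, since the statement only asks for this for each \(x\) separately.
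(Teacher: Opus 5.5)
Your overall plan matches the paper's sketch (and ABV00): apply Pliss to the expansion sequence, apply Pliss to the recurrence sequence, then intersect. Steps 2 and 3 are fine. The Pliss conclusion is unchanged if you shift $a_j,A,c_1,c_2$ by a common constant, and since $\log\dist_\delta\le 0$ the window bound gives the single-term bound.

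The problem is Step 1: your truncation goes the wrong way. Near a \emph{critical} set the derivative degenerates. By (S1), $-\log\|Df(y)^{-1}\|\le\log B+\beta\log\dist(y,\mathcal C)$, which tends to $-\infty$ as $y\to\mathcal C$. So for small $\rho$, every term you replace by $0$ was negative, and your truncated $a_j$ is \emph{larger} than the true value. The sentence ``$a_j$ is then at most the true value'' is therefore false.

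As a result, the Pliss inequality $\sum_{j=n+1}^{n_i}a_j\ge c_1(n_i-n)$ for the truncated sequence gives no lower bound on the true sums. A window ending at $n_i$ can contain a very close approach to $\mathcal C$ whose true contribution is arbitrarily negative while your sequence records $0$. In that case $\prod_{j=n_i-k}^{n_i-1}\|Df(f^jx)^{-1}\|\le\sigma^k$ fails. The premise of the truncation is also wrong in this setting. This paper's (S1) has right-hand side $B\,\dist(x,\mathcal C)^{\beta}$, so $-\log\|Df^{-1}\|$ is unbounded \emph{below} near $\mathcal C$, not above; blow-up of $\|Df\|$ is the singular case treated in ABV00.

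The repair is to drop the truncation. Since $f$ is $C^1$ on the compact manifold $M$, $-\log\|Df(y)^{-1}\|\le\log\sup_M\|Df\|$, and \cref{pliss} needs only an upper bound. Apply it directly with $c_2=2|\log\sigma|$ (valid for large $N$ by \eqref{non-unif. expand}), $c_1=|\log\sigma|$, and $A=\max\{\log\sup_M\|Df\|,c_2\}$. Then $\theta_1=(c_2-c_1)/(A-c_1)$ depends only on $\sigma$ and $f$. Your Steps 2--3 then go through unchanged, with $\gamma$ and then $\delta$ chosen after $\theta_1$.

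If you do want a cut-off (e.g.\ to allow singularities), it must be $\min\{a_j^{\mathrm{true}},A\}$, which is $\le a_j^{\mathrm{true}}$, with slow recurrence bounding the discarded mass. This is the ``cut off that makes it bounded from above'' in the paper's sketch. Controlling that discarded mass through $\dist_\delta$ would also require $\rho\le\delta$, so that $\dist_\delta=\dist$ on the $\rho$-neighbourhood of $\mathcal C$. With your choice $\delta\le\rho$, visits with $\delta\le\dist<\rho$ are not controlled at all.
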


\begin{proof}
    Again we sketch the proof and refer readers to \cite[Lemma 5.4]{ABV00} for details. The strategy is to use \cref{pliss} twice, first for the sequence given by $a_j =-\log \|D f(f ^{j-1}(x))^{-1}\|$ (up to a cut off that makes it bounded from above), and then with $a_j =\log \dist_{\delta}(f^{j-1}(x),\mathcal{C})$ for a convenient $\delta > 0$. It can be proven that there exist many times $n_i$ for which the conclusion of Pliss’ holds, simultaneously, for both sequences. Then it can be checked that any such $n_i$ is a $(\sigma,\delta)$-hyperbolic time for $x$.
\end{proof}

Our argument relies on applying Pesin's entropy formula to suitable ergodic absolutely continuous invariant measures. The following result of Alves, Bonatti, and Viana \cite{ABV00} guarantees the existence of such measures.
\begin{theorem}\label{ABV ACIM}
    Assume $f$ is a non-uniformly expanding map with critical set $\mathcal{C}$, then Lebesgue almost every point belongs in the basin of some ergodic absolutely continuous invariant measure.
\end{theorem}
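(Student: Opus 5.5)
The plan follows the strategy of \cite{ABV00}. First we show that the non-uniform expansion and slow recurrence hypotheses force many hyperbolic times on a positive-Lebesgue set. Then we use the hyperbolic pre-balls of \cref{hyp_times_prop} to build an absolutely continuous invariant measure by pushing forward Lebesgue measure. Finally we decompose it into finitely many ergodic components whose basins cover Lebesgue almost every point of $H$.

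Concretely, fix $\sigma<1$ with $3\log\sigma>-c$, so that $H\subset H(\sigma)$ by the non-uniform expansion and slow recurrence assumptions. By \cref{lem:infty_hyp_time}, every $x\in H$ has $(\sigma,\delta)$-hyperbolic times with density at least $\nu>0$. For a Lebesgue-density point set $A\subset H$ of positive measure, consider the averages
\[
\mu_n=\frac1n\sum_{j=0}^{n-1}f^j_*\bigl(\mathrm{Leb}|_A\bigr).
\]
Split each $\mu_n=\nu_n+\eta_n$, where $\nu_n$ collects the mass of $f^j_*\mathrm{Leb}$ restricted to points for which $j$ is a hyperbolic time.

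By \cref{hyp_times_prop}, $f^j$ maps the hyperbolic pre-ball $V_x$ diffeomorphically onto $B_{\delta_1}(f^jx)$, with backward contraction $\sigma^{k/2}$. Combined with (S3) and the estimate $\dist_\delta(f^{j-k}x,\mathcal C)\ge\sigma^{bk}$, where $b<1/(2\beta)$, this gives uniformly bounded volume distortion: the sum $\sum_k B\sigma^{k/2}\sigma^{-b\beta k}$ converges. Hence each $\nu_n$ has density bounded by a constant $K$ times Lebesgue, and its total mass is at least $\nu\,\mathrm{Leb}(A)$ for large $n$. Passing to a weak* limit, $\mu_n\to\mu$ and $\nu_n\to\nu_\infty$ with $\nu_\infty\le\mu$, $\nu_\infty\ll\mathrm{Leb}$ with density at most $K$, and $\nu_\infty\neq0$. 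One still has to check that $\mu$ is $f$-invariant; this is standard once $\mu(\mathcal C)=0$, which holds because $\nu_\infty$ is absolutely continuous and the singular part is handled below.

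The main obstacle is to show that $\mu$ itself, not just $\nu_\infty$, is absolutely continuous. For this I would follow the ABV argument: the absolutely continuous part $\mu_{ac}$ of the invariant measure $\mu$ is itself invariant, because $f$ is a local diffeomorphism off the null set $\mathcal C$ and so preserves Lebesgue-null sets in both directions. Normalizing $\mu_{ac}$ gives a nonzero absolutely continuous invariant measure.

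Ergodic decomposition then yields ergodic absolutely continuous invariant measures. Each such measure gives full weight to $H(\sigma)$, so almost every point has hyperbolic times, and its support contains a ball of radius $\delta_1/2$ around a generic point. Since $M$ is compact and these balls have disjoint interiors for distinct components, there are only finitely many ergodic absolutely continuous invariant measures $\mu_1,\dots,\mu_p$.

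Finally, to show that Lebesgue almost every $x\in H$ lies in some basin, argue by contradiction. Suppose $H'\subset H\setminus\bigcup_i B(\mu_i)$ has positive measure. Rerunning the construction above with $A=H'$ yields an absolutely continuous invariant measure, whose ergodic components are among the $\mu_i$. A density-point argument using the hyperbolic pre-balls of points of $H'$ then shows that a positive-measure part of $H'$ is mapped into a ball where some $\mu_i$ has almost-full density, so those points lie in $B(\mu_i)$, a contradiction. I expect this basin-covering step, together with the invariance of $\mu_{ac}$ near $\mathcal C$, to require the most care.
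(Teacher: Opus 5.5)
The paper gives no proof of this statement; it quotes it from \cite{ABV00}. Your outline follows the ABV architecture: Ces\`aro averages of pushed-forward Lebesgue measure, the hyperbolic-time part with bounded density, the absolutely continuous part of a limit, finiteness via balls, and basin covering. As written, though, the step ``ergodic decomposition then yields ergodic absolutely continuous invariant measures'' fails. Ergodic components of an absolutely continuous invariant measure need not be absolutely continuous: for the identity map, Lebesgue measure decomposes into Dirac masses. So you cannot first produce a.c.\ ergodic measures and only afterwards use balls to count them.

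The order must be reversed. First prove that every forward-invariant $E\subset H(\sigma)$ with $\mathrm{Leb}(E)>0$ satisfies $\mathrm{Leb}(B\setminus E)=0$ for some ball $B$ of radius comparable to $\delta_1$. This bounds the number of pairwise disjoint invariant sets of positive $\mu_{\mathrm{ac}}$-measure in $H(\sigma)$. Hence $\mu_{\mathrm{ac}}|_{H(\sigma)}$ is a finite sum of restrictions to minimal invariant sets, and these normalized restrictions are the ergodic a.c.\ measures. Two details matter here. The ball is centred at an accumulation point of $f^{n_k}(x)$ along hyperbolic times, not at the generic point $x$. And disjointness comes from $\mathrm{Leb}(B\setminus E)=0$ with $E$ inside a basin, not from $B$ lying in a support, since supports of distinct ergodic measures can overlap.

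This ball lemma is also not a plain density-point argument in dimension $\ge 2$, which is the setting of \cref{thm:main}. There $f^{-n}$ contracts $E^u$ much faster than $E^c$, so the pre-balls $V_{n_k}(x)$ have unbounded eccentricity. The Lebesgue density theorem then gives no control of $\mathrm{Leb}(V_{n_k}(x)\setminus E)/\mathrm{Leb}(V_{n_k}(x))$. ABV use instead a Vitali-type covering argument for hyperbolic pre-balls: backward contraction shows that if pre-balls with times $n_1\le n_2$ meet, the second lies in a bounded enlargement of the first. This yields a pre-ball in which $E$ has relative density close to $1$. Your final ``density-point'' step needs the same tool.

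Further repairs are needed.
\begin{itemize}
\item Your justification of $\mu(\mathcal C)=0$ is wrong. We have $\mu=\nu_\infty+\eta_\infty$, and absolute continuity of $\nu_\infty$ says nothing about $\eta_\infty$. Saying this is ``handled below'' is circular, since that later step presupposes invariance of $\mu$. Under \cref{thm:main} $f$ is continuous on $M$, so invariance is just Krylov--Bogolyubov. If $f$ is only defined off $\mathcal C$, you must get $\mu(\mathcal C)=0$ from slow recurrence, made uniform via Egorov.
\item ``Local diffeomorphism off $\mathcal C$'' preserves null sets in the forward direction only away from $\mathcal C$. To get $f_*\mu_{\mathrm{s}}\perp\mathrm{Leb}$ you also need $\mathrm{Leb}(f(\mathcal C))=0$ (Sard) or $\mu_{\mathrm{s}}(\mathcal C)=0$.
\item The uniform density bound for $\nu_n$ needs more than distortion. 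You must use that two inverse branches of $f^j$ on a ball which agree at one point agree everywhere. Then the pre-balls of distinct preimages in $A$ (of points having $j$ as a hyperbolic time) are disjoint, and summing over branches gives a bound independent of $j$.
\end{itemize}

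With these repairs the basin step closes without density points. The part $\nu'_\infty$ built from $A=H'$ has bounded density, so its densities converge weakly in $L^2$. It is therefore carried by the invariant set $H(\sigma)\setminus\bigcup_i B(\mu_i)$. Some a.c.\ ergodic piece of the new invariant measure then charges this set. That is impossible, because that piece must be one of the $\mu_i$, and $\mu_i(B(\mu_i))=1$.
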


As a by-product, corresponding to the case when the critical set is empty,
\begin{corollary}
     Let $f : M \rightarrow M$ be a $C^2$ covering map (local diffeomorphism) on a compact manifold $M$, which is non-uniformly expanding: \eqref{non-unif. expand} holds for all $x$ in a set $H \subset M$ with positive Lebesgue measure. Then Lebesgue almost every point $x \in H$ belongs in the basin of some ergodic absolutely continuous invariant measure.
\end{corollary}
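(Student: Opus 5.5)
The corollary is the special case $\mathcal C=\emptyset$ of \cref{ABV ACIM}, so the plan is to check that a $C^2$ local diffeomorphism satisfies every hypothesis of that theorem vacuously or trivially, and then invoke it.

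\textbf{Non-flatness.} With $\mathcal C=\emptyset$ we set $\operatorname{dist}(x,\mathcal C)=+\infty$, or any fixed positive constant such as $\operatorname{diam} M$. By compactness of $M$ and the fact that $f$ is a $C^2$ local diffeomorphism, $\|Df\|$ and $\|Df^{-1}\|$ are uniformly bounded. Also, $\log\|Df^{-1}\|$ and $\log|\det Df^{-1}|$ are Lipschitz, since $Df$ is $C^1$ and uniformly invertible. Hence (S1)--(S3) hold with $\beta=0$ (or any $\beta$, after enlarging $B$ and replacing the distance by a constant).

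\textbf{Slow recurrence.} For every $\delta>0$ the truncated distance is $\operatorname{dist}_\delta(\cdot,\mathcal C)\equiv 1$. So $-\log\operatorname{dist}_\delta\equiv 0$ and condition \eqref{slow rec} holds trivially for every $\gamma$.

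\textbf{Non-uniform expansion and the conclusion.} Condition \eqref{non-unif. expand} is assumed on $H$. Thus $f$ is non-uniformly expanding in the sense of the definition above. The only adjustment is that $H$ lies in $M\setminus\mathcal C=M$, and the relevant $H(\sigma)$ is obtained by choosing $\sigma$ with $3\log\sigma\ge -c$. \cref{ABV ACIM} then yields that Lebesgue almost every point, in particular almost every point of $H$, lies in the basin of an ergodic absolutely continuous invariant measure.

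\textbf{Main obstacle.} The only delicate point is that \cref{ABV ACIM} is stated in the partially hyperbolic, center-cone setting of this paper. With no critical set and no splitting assumed, that framework does not literally apply. I would handle this by taking the trivial splitting, or equivalently by working with the full operator norm $\|Df^{-1}\|$ as in the original formulation of \cite{ABV00}; \cref{cone-full-norm} says the two notions agree up to adjusting $\sigma$. Concretely:

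\begin{itemize}
\item \cref{hyp_times_prop} holds with $\delta_1$ chosen from the uniform $C^2$ bounds on $f$; no critical set needs to be avoided.
\item \cref{lem:infty_hyp_time} gives a positive density of hyperbolic times for points of $H$.
\item The ABV construction of the absolutely continuous invariant measure applies unchanged to these hyperbolic times.
\end{itemize}
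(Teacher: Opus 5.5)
Your proposal is correct and takes the same route as the paper, which presents this corollary simply as the $\mathcal C=\emptyset$ case of \cref{ABV ACIM} (quoted from \cite{ABV00}, where the local-diffeomorphism case uses the full norm $\|Df^{-1}\|$ and needs no splitting). Your explicit check that (S1)--(S3) and slow recurrence become vacuous spells out what the paper leaves implicit; since \cref{nonflat} requires $\beta>0$, use your fallback of a constant distance with enlarged $B$ rather than $\beta=0$.
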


\section{Proof of the Main Theorems}

In this section, we prove \cref{thm:main,thm:main-1d} by verifying the expansivity, specification, pressure, and regularity conditions in the Climenhaga--Thompson criterion. We give the argument in the setting of \cref{thm:main}. For \cref{thm:main-1d}, we take the unstable direction to be trivial and replace the pressure comparison \cref{low_pressure_outside} by an estimate adapted for the interval maps. The resulting proof is completed in \cref{sec:verif}. 

In \cref{sec:exp}, we establish non-uniform expansivity by showing that any ergodic measure assigning positive weight to the non-expansive set has free energy strictly smaller than the topological pressure. In \cref{sec:spec}, we prove non-uniform versions of specification and regularity by decomposing the space of orbit segments \(X \times \mathbb{N}\) into a good core \(\mathcal{G}\) and a suffix collection \(\mathcal{S}\). The collection \(\mathcal{G}\) satisfies specification, and the potential \(\varphi\) has the Bowen property on \(\mathcal{G}\), while \(\mathcal{S}\) carries pressure strictly smaller than the topological pressure. Finally, in \cref{sec:verif}, we combine these ingredients to complete the proof.

\subsection{Non-uniform Expansivity}\label{sec:exp}
We first prove that, for maps in our class, any ergodic \(f\)-invariant measure with large pressure must be positively expansive. We follow the strategy outlined below:
\begin{itemize}
    \item Select a certain set \(\mathcal{K}\) of ergodic \(f\)-invariant measures, which are natural candidates for attaining the variational principle.
    \item Prove that any measure outside \(\mathcal{K}\) has strictly smaller free energy than the topological pressure and hence cannot be an equilibrium state.
    \item Show that any measure inside \(\mathcal{K}\) is an \emph{expanding measure}, and any expanding measure is almost positively expansive.
\end{itemize}

\begin{remark}
    The proof uses two main results from \cite{ABV00}. The first is the existence of some SRB measure \(\mu_{\mathrm{SRB}}\) (or, in our non-contracting setting, the corresponding smooth measure), whose free energy admits a good lower bound and is used to derive a pressure gap for any measure not in $\mathcal{K}$. The second is the existence of infinitely many, in fact dense, hyperbolic times for a positive Lebesgue measure set of points. This will allow us to show that any measure in \(\mathcal{K}\) assigns zero weight to the non-expansive set.
\end{remark}

We begin by describing \(\mathcal{K}\), the set of candidate measures for achieving the variational principle. Fix \(c_0>0\) and define
\[
\mathcal{K}
\;=\;
\Bigl\{
  \mu \in \mathcal{M}_e(f)
  :
  \lambda^c(\mu)
  \;>\;
  \tfrac{c_0}{3}
\Bigr\}.
\]
That is, \(\mathcal{K}\) consists of those ergodic \(f\)-invariant measures whose central Lyapunov exponent is strictly bounded below by \(\tfrac{c_0}{3}\).

The following lemma guarantees that $\mathcal{K}$ is not empty.
\begin{lemma}\label{existence_SRB}
There exists an ergodic SRB measure \(\mu_{\mathrm{SRB}}\). Moreover, since \(\dim E^c=1\) and \(E^u\) is uniformly expanding, we have
\[
\lambda^c(\mu_{\mathrm{SRB}})\ge c_0.
\]
In particular, \(\mathcal K\neq \emptyset\).
\end{lemma}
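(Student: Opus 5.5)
The plan is to obtain $\mu_{\mathrm{SRB}}$ directly from \cref{ABV ACIM} and then read off the center exponent from the non-uniform expansion hypothesis. By hypothesis (2) of \cref{thm:main}, the set $H\subset\Lambda$ has positive Lebesgue measure and every $x\in H$ satisfies the non-uniform expansion estimate with constant $c_0$. Hypothesis (3) gives slow recurrence, and hypothesis (1) together with \cref{nonflat} gives the non-flat critical set. So $f$ is non-uniformly expanding with slow recurrence on $H$. \cref{ABV ACIM} then says that Lebesgue almost every point of $H$ lies in the basin of some ergodic absolutely continuous invariant measure. Since $H$ has positive measure, at least one such ergodic measure $\mu_{\mathrm{SRB}}$ has a basin $B(\mu_{\mathrm{SRB}})$ with $\mathrm{Leb}(H\cap B(\mu_{\mathrm{SRB}}))>0$. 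Because $\Lambda$ is compact and forward invariant and the orbits of points of $H$ stay in $\Lambda$, the measure is supported in $\Lambda$. In the non-invertible, non-contracting setting, this absolutely continuous ergodic measure is what we call the SRB measure.

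Next I bound the center exponent. Fix a generic point $x\in H\cap B(\mu_{\mathrm{SRB}})$, chosen also to be Oseledets-regular for $\mu_{\mathrm{SRB}}$; this is possible after discarding a null set, using absolute continuity. By \cref{cone-full-norm}, $\log\|Df(y)^{-1}\|$ and $\log\|Df(y)^{-1}|_{E^c(y)}\|$ differ by at most $\log C$. Since $\dim E^c=1$, we have $\|Df(y)^{-1}|_{E^c(y)}\|=\|Df(y)|_{E^c(y)}\|^{-1}$. Hence the center Birkhoff sums telescope:
\[
\log\|Df^n(x)|_{E^c(x)}\|=\sum_{j=0}^{n-1}\log\|Df(f^jx)|_{E^c}\|=-\sum_{j=0}^{n-1}\log\|Df(f^jx)^{-1}|_{E^c}\|.
\]
I then argue that $\|Df(f^jx)^{-1}\|=\|Df(f^jx)^{-1}|_{E^c}\|$ for large $j$, or at least in the averaged sense. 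The reason is domination: the weakest expansion of $Df$ is realized in the one-dimensional center direction up to the constant $C$ of \cref{cone-full-norm}. A uniform constant would be killed after dividing by $n$, but here it enters at every step, so a sharper argument is needed. The cleaner route is the following. Oseledets gives two exponents: $\lambda^c(\mu_{\mathrm{SRB}})$ and the unstable ones, all of which exceed $\lambda^c$ by domination. Hence $\lim\frac1n\log\|(Df^n(x))^{-1}\|=-\lambda^c$. Submultiplicativity gives $\|(Df^n)^{-1}\|\le\prod_j\|Df(f^jx)^{-1}\|$, so
\[
-\lambda^c(\mu_{\mathrm{SRB}})\le\limsup_n\frac1n\sum_{j<n}\log\|Df(f^jx)^{-1}\|\le -c_0 .
\]
This yields $\lambda^c(\mu_{\mathrm{SRB}})\ge c_0>c_0/3$, so $\mu_{\mathrm{SRB}}\in\mathcal K$.

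The main obstacle is justifying the Oseledets step. Points of $\mathcal C$ make $\log\|Df^{-1}\|$ unbounded, so I need $\log\|Df^{-1}\|\in L^1(\mu_{\mathrm{SRB}})$, or at least integrability of its positive part. I would get this from slow recurrence combined with (S1): near $\mathcal C$ we have $\log\|Df(y)^{-1}\|\le \log B-\beta\log\mathrm{dist}(y,\mathcal C)$. Birkhoff averages of $-\log\mathrm{dist}_\delta$ are bounded by $\gamma$ on the positive-measure set $H\cap B(\mu_{\mathrm{SRB}})$. By monotone convergence applied to truncations, this gives $\int -\log\mathrm{dist}_\delta(\cdot,\mathcal C)\,d\mu_{\mathrm{SRB}}\le\gamma<\infty$. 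That integrability is enough for Oseledets' theorem and for the identification of the norm growth with the smallest exponent $\lambda^c$.
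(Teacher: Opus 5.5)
Your outline matches the paper's proof: \cref{ABV ACIM} gives an ergodic absolutely continuous measure whose basin meets \(H\) in positive Lebesgue measure, and the exponent bound is then read off from the hypothesis on \(H\). The paper only asserts this last step. The gap is in how you justify it. You choose \(x\in H\cap B(\mu_{\mathrm{SRB}})\) that is Oseledets-regular for \(\mu_{\mathrm{SRB}}\), ``after discarding a null set, using absolute continuity''. But \(\mu_{\mathrm{SRB}}\ll\mathrm{Leb}\) only says that Lebesgue-null sets are \(\mu_{\mathrm{SRB}}\)-null. The set of \(\mu_{\mathrm{SRB}}\)-regular points has full \(\mu_{\mathrm{SRB}}\)-measure, yet its complement can have positive Lebesgue measure and could contain all of \(H\). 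A priori \(\mu_{\mathrm{SRB}}(H)\) might be \(0\), and basin points need not be \(\mu_{\mathrm{SRB}}\)-typical; they need not even lie in the support. So nothing guarantees a point of \(H\) at which \(\frac1n\log\|(Df^n(x))^{-1}\|\to-\lambda^c(\mu_{\mathrm{SRB}})\). The integrability issue in your last paragraph is not the real obstacle. What is missing is a link between the particular point \(x\in H\) and \(\mu_{\mathrm{SRB}}\), and the link you actually have is the basin property, which controls Birkhoff averages of continuous functions.

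The repair applies the truncation trick you already use for \(-\log\dist_\delta\) to the relevant function, and it needs neither Oseledets nor the constant of \cref{cone-full-norm}. Since \(\dim E^c=1\) and \(E^c\) is \(Df\)-invariant, \(\|Df(y)^{-1}\|\ge\|Df(y)|_{E^c(y)}\|^{-1}\). That is, \(-\psi^c\le\log\|Df^{-1}\|\) pointwise, so every \(x\in H\) satisfies \(\limsup_n\frac1n S_n(-\psi^c)(x)\le-c_0\). By (S1), \(\psi^c\to-\infty\) at \(\mathcal C\), so \(g_M:=\min(-\psi^c,M)\) is continuous on \(\Lambda\), with \(g_M\le-\psi^c\). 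For \(x\in H\cap B(\mu_{\mathrm{SRB}})\) and every \(M\),
\[
\int g_M\,d\mu_{\mathrm{SRB}}=\lim_{n\to\infty}\frac1n S_n g_M(x)\le\limsup_{n\to\infty}\frac1n S_n(-\psi^c)(x)\le -c_0 .
\]
Since \(-\psi^c\) is bounded below and \(g_M\uparrow-\psi^c\), monotone convergence gives \(-\lambda^c(\mu_{\mathrm{SRB}})=\int(-\psi^c)\,d\mu_{\mathrm{SRB}}\le-c_0\). This is where \(\dim E^c=1\) is really used: the center exponent is the integral of the single function \(\psi^c\), so it can be read off from the empirical measures along any basin point of \(H\).
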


\begin{proof}
By \cref{ABV ACIM}, there exists an absolutely continuous invariant measure whose ergodic components are SRB measures, and whose basin covers Lebesgue almost every point of \(H\). Since \(H\) has positive Lebesgue measure, at least one ergodic SRB measure \(\mu_{\mathrm{SRB}}\) exists.

Because the non-uniform expansion is bounded away from \(c_0>0\) on \(H\), and \(E^u\) is uniformly expanding while \(\dim E^c=1\), the center Lyapunov exponent of \(\mu_{\mathrm{SRB}}\) is bounded below by \(c_0\). Therefore \(\mu_{\mathrm{SRB}}\in \mathcal K\), and so \(\mathcal K\neq\emptyset\).
\end{proof}

\subsubsection{\texorpdfstring{Low Pressure Outside \(\mathcal{K}\)}{Low Pressure Outside K}}
The measures in \(\mathcal{K}\) are natural candidates for realizing the variational principle because the measures outside of \(\mathcal{K}\) have small pressure.

Recall we assume that \(f\) has \emph{bounded unstable Jacobian distortion}: there exists \(\kappa\in(0,c_0/2)\) such that for every \(x,y\in \Lambda\),
\[
e^{-\kappa}
<
\frac{\left|\det\bigl(Df_x|_{E^u(x)}\bigr)\right|}
     {\left|\det\bigl(Df_y|_{E^u(y)}\bigr)\right|}
<
e^{\kappa}.
\]
Define
\[
J^u(x):=\log\left|\det\bigl(Df(x)|_{E^u(x)}\bigr)\right|.
\]
Then for an ergodic invariant measure $\mu$, we can write the sum of all unstable Lyapunov exponents in terms of $J^u(x)$: by Birkhoff's ergodic theorem, 
\[
\sum_{k=1}^{\dim E^u} \lambda^u(\mu) = \int_M\log\left|\det\bigl(Df(x)|_{E^u(x)}\bigr)\right|\,d\mu = \int_MJ^u(x)\,d\mu
\]

Let \(\mu_{\mathrm{SRB}}\) be an ergodic SRB measure whose center Lyapunov exponent \(\lambda^c(\mu_{\mathrm{SRB}})\) is at least \(c_0\).

\begin{lemma}\label{low_pressure_outside}
Let $\varphi: M\rightarrow\R$ be a continuous potential of bounded variation: \(\sup \varphi-\inf \varphi < \frac{c_0}{2}-\kappa\). Then for every \(\eta\notin\mathcal K\),
\[
h_{\eta}(f)+\int \varphi\,d\eta \le P_{\mathrm{top}}(\varphi;f)-\frac{c_0}{6}.
\]
In particular,
\[
\sup_{\eta\in\mathcal M_e(f)\setminus\mathcal K} \left\{h_{\eta}(f)+\int \varphi\,d\eta\right\}
\le P_{\mathrm{top}}(\varphi;f)-\frac{c_0}{6}.
\]
\end{lemma}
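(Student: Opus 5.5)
We compare the free energy of an arbitrary ergodic \(\eta\notin\mathcal K\) with that of \(\mu_{\mathrm{SRB}}\) from \cref{existence_SRB}. The two ingredients are Pesin's entropy formula for \(\mu_{\mathrm{SRB}}\) (lower bound) and the Margulis--Ruelle inequality for \(\eta\) (upper bound).

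\emph{Lower bound for the pressure.} Since \(\mu_{\mathrm{SRB}}\) is absolutely continuous (or SRB) with all exponents positive, Pesin's entropy formula gives
\[
h_{\mu_{\mathrm{SRB}}}(f)=\int J^u\,d\mu_{\mathrm{SRB}}+\lambda^c(\mu_{\mathrm{SRB}})\ge \int J^u\,d\mu_{\mathrm{SRB}}+c_0 .
\]
By the variational principle \eqref{variational principle},
\[
P_{\mathrm{top}}(\varphi;f)\ge \int J^u\,d\mu_{\mathrm{SRB}}+c_0+\inf\varphi .
\]
The critical set requires some care here. Pesin's formula for non-invertible maps with singularities needs integrability of \(\log\|Df^{-1}\|\), and this is supplied by slow recurrence and the non-flatness condition (S1). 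Alternatively, one can use the Ledrappier--Young type characterization of SRB measures for endomorphisms. I would cite this rather than reprove it.

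\emph{Upper bound for \(\eta\).} The Ruelle inequality (valid for \(C^1\) maps off a critical set, and in fact for \(C^1\) endomorphisms on compact manifolds in general) gives
\[
h_\eta(f)\le \sum_i \lambda_i^+(\eta)\le \int J^u\,d\eta+\max\{\lambda^c(\eta),0\}.
\]
Here we used that \(E^u\) carries exactly the unstable exponents, whose sum is \(\int J^u\,d\eta\), and that \(E^c\) is one-dimensional. Since \(\eta\notin\mathcal K\), we have \(\lambda^c(\eta)\le c_0/3\), so
\[
h_\eta(f)+\int\varphi\,d\eta\le \int J^u\,d\eta+\frac{c_0}{3}+\sup\varphi .
\]

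\emph{Comparison.} Bounded unstable Jacobian distortion gives \(J^u(x)-J^u(y)<\kappa\) for all \(x,y\), hence
\[
\int J^u\,d\eta\le \int J^u\,d\mu_{\mathrm{SRB}}+\kappa .
\]
Subtracting the two bounds yields
\[
P_{\mathrm{top}}(\varphi;f)-\Bigl(h_\eta+\int\varphi\,d\eta\Bigr)\ge c_0-\frac{c_0}{3}-\kappa-(\sup\varphi-\inf\varphi)>\frac{2c_0}{3}-\kappa-\frac{c_0}{2}+\kappa=\frac{c_0}{6},
\]
using \(\sup\varphi-\inf\varphi<\frac{c_0}{2}-\kappa\). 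This gives the claimed gap for each \(\eta\), and taking the supremum gives the second statement.

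\emph{Main obstacle.} The delicate step is justifying Pesin's formula for \(\mu_{\mathrm{SRB}}\), together with the exact value \(\lambda^c(\mu_{\mathrm{SRB}})\ge c_0\), in the presence of \(\mathcal C\). For the exponent, \cref{cone-full-norm} shows that \(\|Df^{-1}\|\) is comparable to the center inverse norm. Averaging along \(\mu_{\mathrm{SRB}}\)-typical points of \(H\) then gives \(\lambda^c\ge c_0\). For the entropy, I would first try to show \(\log\dist(\cdot,\mathcal C)\in L^1(\mu_{\mathrm{SRB}})\), which follows from slow recurrence and (S1), and then invoke Pesin/Ledrappier--Young for smooth-off-singularities maps.
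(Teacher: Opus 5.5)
Your proposal is correct and follows essentially the same route as the paper: Ruelle's inequality for $\eta$, and Pesin's formula for $\mu_{\mathrm{SRB}}$ with $\lambda^c(\mu_{\mathrm{SRB}})\ge c_0$. It then uses $\sup J^u-\inf J^u\le\kappa$ from bounded unstable Jacobian distortion and the oscillation bound on $\varphi$, giving the same margin $\frac{2c_0}{3}-\kappa-(\sup\varphi-\inf\varphi)>\frac{c_0}{6}$; your use of $\max\{\lambda^c(\eta),0\}$ in Ruelle's inequality is slightly more careful than the paper, which writes $\lambda^c(\eta)$ without the positive part.
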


\begin{proof}
We compare \(h_{\eta}(f) + \int \varphi \, d\eta\) with
\(h_{\mu_{\mathrm{SRB}}}(f) + \int \varphi \, d\mu_{\mathrm{SRB}}\).
Since \(\eta \notin \mathcal{K}\), it follows that \(\lambda^c(\eta) \le \frac{c_0}{3}\). Therefore,
\begin{align*}
    h_{\eta}(f) + \int \varphi \, d\eta
    &\le \int J^u\,d\eta +\frac{c_0}{3}+ \int \varphi \, d\eta \\
    &\le \sup_{x\in\Lambda} J^u(x) +\frac{c_0}{3}
      + \int \varphi \, d\mu_{\mathrm{SRB}} + \bigl(\sup\varphi-\inf\varphi\bigr) \\
    &< \sup_{x\in\Lambda} J^u(x) +\frac{c_0}{3}
      +\left(\frac{c_0}{2}-\kappa\right)+ \int \varphi \, d\mu_{\mathrm{SRB}} \\
    &\le \inf_{x\in\Lambda} J^u(x)+\kappa+\frac{c_0}{3}
      +\left(\frac{c_0}{2}-\kappa\right)+ \int \varphi \, d\mu_{\mathrm{SRB}} \\
    &= \inf_{x\in\Lambda} J^u(x)+\frac{5c_0}{6}+\int \varphi \, d\mu_{\mathrm{SRB}} \\
    &\le h_{\mu_{\mathrm{SRB}}}(f) + \int \varphi \, d\mu_{\mathrm{SRB}} -\frac{c_0}{6}.
\end{align*}
Here, the first inequality follows from Ruelle's inequality in the setting of partial hyperbolicity with one-dimensional center:
\[
h_\eta(f)\le \int J^u\,d\eta + \lambda^c(\eta),
\]
and the fact that \(\lambda^c(\eta)\le c_0/3\). The second inequality uses
\[
\int J^u\,d\eta \le \sup_{x\in\Lambda} J^u(x)
\quad\text{and}\quad
\int\varphi\,d\eta \le \int\varphi\,d\mu_{\mathrm{SRB}}+(\sup\varphi-\inf\varphi).
\]
The strict inequality uses our assumption
\[
\sup \varphi - \inf \varphi < \frac{c_0}{2}-\kappa.
\]
The next inequality uses bounded unstable Jacobian distortion, which implies
\[
\sup_{x\in\Lambda} J^u(x)-\inf_{x\in\Lambda} J^u(x)<\kappa.
\]
The last inequality uses Pesin's entropy formula:
\[
h_{\mu_{\mathrm{SRB}}}(f)
=
\int J^u\,d\mu_{\mathrm{SRB}}+\lambda^c(\mu_{\mathrm{SRB}})
\ge
\inf_{x\in\Lambda} J^u(x) + c_0.
\]
By the variational principle, this proves that
\[
h_{\eta}(f) + \int \varphi \, d\eta\le P_{\mathrm{top}}(\varphi; f )-\frac{c_0}{6}.
\]
\end{proof}

For the one-dimensional dynamics considered in \cref{thm:main-1d}, we adapt \cref{low_pressure_outside} by replacing the small variation assumption with hyperbolic potential:

\begin{lemma}\label{low_pressure_outside_1d}
Assume the hypotheses of \cref{thm:main-1d}, and let \(\varphi\) be a H\"older continuous potential satisfying
\[
\max_{\nu\in\mathcal M(f|_J)}\int\varphi\,d\nu
<P_{\mathrm{top}}(\varphi;f|_J).
\]
After decreasing the expansion constant \(c_0>0\), if necessary, every invariant measure \(\mu\in\mathcal M(f|_J)\) with
\[
\int\log|f'|\,d\mu\le\frac{c_0}{3}
\]
satisfies
\[
h_\mu(f|_J)+\int\varphi\,d\mu
\le P_{\mathrm{top}}(\varphi;f|_J)-\frac{2c_0}{3}.
\]
\end{lemma}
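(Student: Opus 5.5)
The plan is a compactness argument on the space of invariant measures, using that the entropy map is upper semicontinuous for interval maps. Two facts drive it. For an interval map with finitely many critical points (a piecewise monotone map), \(\mu\mapsto h_\mu(f|_J)\) is upper semicontinuous on \(\mathcal M(f|_J)\), by Misiurewicz--Szlenk and Hofbauer. Also, \(\mu\mapsto\int\log|f'|\,d\mu\) is upper semicontinuous, since \(\log|f'|\) is bounded above and is a decreasing limit of the continuous functions \(\max\{\log|f'|,-m\}\).

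\textbf{Step 1: measures with no expansion.} Ruelle's inequality in dimension one gives \(h_\mu\le \max\{0,\int\log|f'|\,d\mu\}\), applied to ergodic components and then integrated. So if \(\int\log|f'|\,d\mu\le 0\), then \(h_\mu=0\) and
\[
h_\mu+\int\varphi\,d\mu\le \max_{\nu}\int\varphi\,d\nu=P_{\mathrm{top}}(\varphi;f|_J)-\Delta,
\]
where \(\Delta>0\) is the hyperbolicity gap of \(\varphi\).

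\textbf{Step 2: small expansion, by contradiction.} For \(t\ge0\) set
\[
F(t):=\sup\Bigl\{h_\mu+\int\varphi\,d\mu:\ \mu\in\mathcal M(f|_J),\ \int\log|f'|\,d\mu\le t\Bigr\}.
\]
The constraint set is weak*-closed by upper semicontinuity of the Lyapunov integral, hence compact. The functional \(h_\mu+\int\varphi\,d\mu\) is upper semicontinuous, so the supremum is attained. \(F\) is nondecreasing.

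Suppose \(F(t)\ge P_{\mathrm{top}}-\Delta/2\) for every \(t>0\). Choose \(t_k\downarrow0\) and maximizers \(\mu_k\), and pass to a weak* limit \(\mu\). Upper semicontinuity of the Lyapunov integral gives \(\int\log|f'|\,d\mu\le0\). Upper semicontinuity of the free energy gives \(h_\mu+\int\varphi\,d\mu\ge P_{\mathrm{top}}-\Delta/2\), which contradicts Step 1.

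Hence there is \(t_0>0\) with \(F(t_0)\le P_{\mathrm{top}}-\Delta/2\). Now decrease \(c_0\) so that \(c_0/3\le t_0\) and \(2c_0/3\le\Delta/2\), i.e. \(c_0\le\min\{3t_0,3\Delta/4\}\). For every \(\mu\) with \(\int\log|f'|\,d\mu\le c_0/3\) this yields \(h_\mu+\int\varphi\,d\mu\le P_{\mathrm{top}}-2c_0/3\), which is the claim.

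Decreasing \(c_0\) does not disturb the hypotheses of \cref{thm:main-1d}, since condition (1) holds for any smaller positive constant. The set \(\mathcal K\) and the later arguments only require some fixed positive \(c_0\).

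\textbf{Main obstacle.} The delicate input is upper semicontinuity of entropy for a multimodal map with critical points, which is not expansive. I would cite that piecewise monotone maps have finite, upper semicontinuous entropy structure, or use Buzzi's asymptotic \(h\)-expansiveness for \(C^\infty\) maps; for \(C^2\) maps, Misiurewicz's result for piecewise monotone continuous maps suffices.

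If that is unavailable, the fallback is an explicit estimate. Bound \(h_\mu\le \int\log|f'|\,d\mu\le t\) via Ruelle, so the free energy is at most \(t+\max_\nu\int\varphi\,d\nu=P_{\mathrm{top}}-\Delta+t\). Taking \(t_0=\Delta/2\) gives the bound directly, with no compactness needed. This simpler route actually suffices, and I would present it as the main proof: choose \(c_0\le 3\Delta/5\), so that \(c_0/3+2c_0/3\le\Delta\). The compactness argument then becomes unnecessary.
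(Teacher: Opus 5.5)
Your main argument (Ruelle's inequality, the gap \(\Delta:=P_{\mathrm{top}}(\varphi;f|_J)-\max_{\nu}\int\varphi\,d\nu\), and shrinking \(c_0\); note that \(c_0\le\Delta\) already suffices) follows the paper's route. However, the step ``\(h_\mu\le\int\log|f'|\,d\mu\) via Ruelle'' is not justified for non-ergodic \(\mu\). The lemma is needed for such measures: in \cref{specification-gap} it is applied to a weak\(^*\) limit of the \(\mu_n\), which need not be ergodic.

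Ruelle's inequality gives \(h_\nu\le\max\{0,\lambda(\nu)\}\) for ergodic \(\nu\), where \(\lambda(\nu)=\int\log|f'|\,d\nu\). Integrating over the ergodic decomposition \(\mu=\int\mu_x\,d\mu(x)\) yields only \(h_\mu\le\int\max\{0,\lambda(\mu_x)\}\,d\mu(x)\). By convexity this is \(\ge\max\{0,\int\log|f'|\,d\mu\}\), not \(\le\); your Step 1 makes the same slip.

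This is not a technicality. Your argument would go through unchanged for a map with a superattracting cycle \(\mathcal O\subset J\), where the conclusion is false. Take any invariant \(\nu\) with \(h_\nu+\int\varphi\,d\nu>P_{\mathrm{top}}(\varphi;f|_J)-2c_0/3\), and let \(\delta_{\mathcal O}\) be the equidistribution on the cycle. For small \(s>0\), the measure \(\mu=s\,\delta_{\mathcal O}+(1-s)\nu\) has \(\int\log|f'|\,d\mu=-\infty\le c_0/3\), yet its free energy still exceeds \(P_{\mathrm{top}}(\varphi;f|_J)-2c_0/3\).

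The missing input is that every ergodic measure on \(J\) has nonnegative Lyapunov exponent. The paper gets this from topological exactness of \(f|_J\), which rules out attracting periodic orbits, together with \cite[Appendix A, Proposition A.1]{RL20}. Then \(h_{\mu_x}\le\lambda(\mu_x)\) for every ergodic component, and integrating gives \(h_\mu\le\int\log|f'|\,d\mu\). Your proof never uses topological exactness, and that is exactly where it is required. For ergodic \(\mu\) alone your argument is fine as written, since \(h_\mu\le\max\{0,\lambda(\mu)\}\le c_0/3\).

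The compactness detour has a further defect besides inheriting the Step 1 problem. Upper semicontinuity of \(\mu\mapsto\int\log|f'|\,d\mu\) makes superlevel sets closed, not sublevel sets. Along \(\mu_k\to\mu\) it gives \(\int\log|f'|\,d\mu\ge\limsup_k\int\log|f'|\,d\mu_k\), so you cannot conclude \(\int\log|f'|\,d\mu\le 0\) for the limit measure. A vanishing amount of mass placed ever closer to the critical set can contribute a fixed negative amount to \(\int\log|f'|\,d\mu_k\) that disappears in the limit. Discarding that route was the right call.
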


\begin{proof}
The pressure hypothesis allows us to decrease \(c_0\) so that
\[
0<c_0\le P_{\mathrm{top}}(\varphi;f|_J)
-\max_{\nu\in\mathcal M(f|_J)}\int\varphi\,d\nu.
\]
The resulting constant may depend on \(\varphi\), and the non-uniform expansion hypothesis remains valid on the same set \(H\).

Topological exactness on \(J\) excludes strictly attracting periodic orbits. Hence every ergodic invariant measure on \(J\) has nonnegative Lyapunov exponent by \cite[Appendix A, Proposition A.1]{RL20}. Ruelle's inequality and ergodic decomposition therefore give
\[
h_\mu(f|_J)\le\int\log|f'|\,d\mu
\qquad\text{for every }\mu\in\mathcal M(f|_J).
\]
Consequently, if \(\int\log|f'|\,d\mu\le c_0/3\), then
\begin{align*}
h_\mu(f|_J)+\int\varphi\,d\mu
&\le\frac{c_0}{3}
+\max_{\nu\in\mathcal M(f|_J)}\int\varphi\,d\nu\\
&\le P_{\mathrm{top}}(\varphi;f|_J)-\frac{2c_0}{3}.
\end{align*}
\end{proof}

In the interval case, \cref{low_pressure_outside_1d} replaces
\cref{low_pressure_outside} and applies to all invariant measures,
including those in \eqref{eq:upper-bound-entropy}.

\subsubsection{Expanding Measures}
We show that every measure in $\mathcal{K}$ is an \emph{expanding measure} originally introduced in \cite{ABV00}. Let us recall its definition:

\begin{definition}
     A measure \(\mu\) is called a \(\sigma^{-1}\)-expanding measure if \(x \in H(\sigma)\) for \(\mu\)-almost every point \(x\). In other words, \(\mu(H(\sigma)) = 1\). Here \(H(\sigma)\) is the \(\sigma^{-1}\)-expanding set defined in \cref{expanding_set}.
\end{definition}

We first need to control the recurrence to the critical set $\mathcal{C}$ of generic points of invariant ergodic measures whose Lyapunov exponents are regular.

Recall that \hyperref[nonflat]{(S1)} in the non-flatness condition of the center direction near $\mathcal C$ says that there exist constants $B>1$ and $\beta>0$ such that for every $x\in M\setminus \mathcal C$,
\[
B^{-1}\, \dist(x,\mathcal C)^{\beta}
\;\le\;
\|Df(x)|_{E^c(x)}\|
\;\le\;
B\, \mathrm{dist}(x, \mathcal{C})^{\beta}.
\]

\begin{lemma}\label{dist_integrable}
    Let $\eta$ be an ergodic measure such that $\lambda^c(\eta) > -\infty$, where $\lambda^c$ is the Lyapunov exponent of $\eta$ associated to $E^c$. Then,
    \[
    \int |\log \, \dist (x, \mathcal{C})| \,d\eta < \infty.
    \]
\end{lemma}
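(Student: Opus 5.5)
The plan is to compare \(|\log\dist(x,\mathcal C)|\) pointwise with \(|\log\|Df(x)|_{E^c(x)}\||\) using (S1). Then integrability of the distance follows from integrability of the center log-derivative, and the latter is where the hypothesis \(\lambda^c(\eta)>-\infty\) enters.

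First I would make sure the integrand is finite \(\eta\)-almost everywhere, i.e. \(\eta(\mathcal C)=0\). If \(\eta(\mathcal C)>0\), then \(\log\|Df|_{E^c}\|=-\infty\) on a set of positive measure. Since \(\log\|Df|_{E^c}\|\) is bounded above (\(f\) is \(C^2\) on a compact manifold), its integral, which equals \(\lambda^c(\eta)\) because \(E^c\) is one-dimensional and invariant, would be \(-\infty\). This contradicts the hypothesis.

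Next, write \(\psi(x)=\log\|Df(x)|_{E^c(x)}\|\). Its positive part is bounded by \(\log\sup\|Df\|\). By Birkhoff's ergodic theorem, applicable to functions with integrable positive part, \(\lambda^c(\eta)=\int\psi\,d\eta\). This uses the one-dimensionality of \(E^c\), which makes the cocycle scalar:
\[
\log\|Df^n|_{E^c}\|=S_n\psi .
\]
Hence \(\lambda^c(\eta)>-\infty\) forces \(\psi^-\in L^1(\eta)\), and so \(\psi\in L^1(\eta)\).

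Now apply (S1) with \(v\in E^c(x)\subset\mathrm{Cone}^c(x)\):
\[
\beta\log\dist(x,\mathcal C)-\log B\le\psi(x)\le\beta\log\dist(x,\mathcal C)+\log B .
\]
The compactness of \(M\) gives \(\dist(x,\mathcal C)\le\operatorname{diam}M\), so \(\log\dist\) is bounded above. The upper inequality gives
\[
-\log\dist(x,\mathcal C)\le \beta^{-1}\bigl(\log B-\psi(x)\bigr)\le\beta^{-1}\bigl(\log B+|\psi(x)|\bigr).
\]
Combining this with the bound from above, we get \(|\log\dist(x,\mathcal C)|\le C_1+\beta^{-1}|\psi(x)|\) for a constant \(C_1\). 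Integrating yields the claim.

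The main subtlety is that (S1) is stated only near \(\mathcal C\) or on \(M\setminus\mathcal C\), and that \(E^c\) lives only on \(\Lambda\). Away from a \(\delta\)-neighbourhood of \(\mathcal C\), \(|\log\dist|\) is bounded anyway, so only the points near \(\mathcal C\) need (S1), and there the argument above applies. I would also check that \(\eta\) is supported on \(\Lambda\), where the splitting is defined; this holds since we consider measures of \(f|_\Lambda\). The identification \(\lambda^c=\int\psi\) with a possibly non-integrable negative part is the step I expect to need the most care, via the monotone convergence version of Birkhoff's theorem.
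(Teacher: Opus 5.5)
Your proof is correct and follows the paper's argument. Both use the upper bound in (S1) along \(E^c\) to get \(-\log\dist(x,\mathcal C)\le\beta^{-1}\bigl(\log B-\log\|Df(x)|_{E^c(x)}\|\bigr)\), integrate, and let compactness handle the bounded positive part of \(\log\dist\); your extra steps (\(\eta(\mathcal C)=0\), and \(\lambda^c(\eta)=\int\psi\,d\eta\) via Birkhoff for functions with integrable positive part) just make explicit what the paper takes for granted.
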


\begin{proof}
    Since $\eta$ is ergodic,
    \[
    \lambda^c(\eta)=\int \log\|Df|_{E^c}\|\,d\eta > -\infty.
    \]
    By the non-flatness assumption,
    \[
    -\log \dist(x,\mathcal C)
    \;\le\;
    \frac{1}{\beta}\bigl(-\log \|Df(x)|_{E^c(x)}\|+\log B\bigr)
    \qquad\text{for every }x\in M\setminus \mathcal C.
    \]
    Hence $-\log \dist(x,\mathcal C)$ is $\eta$-integrable. Since $M$ is compact, $|\log \dist(x,\mathcal C)|$ differs from $-\log \dist(x,\mathcal C)$ by a bounded term, and the conclusion follows.
\end{proof}

\begin{lemma}\label{slow_rec}
    Let \(\eta\) be an ergodic measure with \(\lambda^c(\eta) > -\infty\). Then, for any \(\gamma > 0\), there exists \(\delta > 0\) such that for \(\eta\)-almost every point \(x\),
    \[
    \limsup_{n \to \infty}
    \frac{1}{n} \sum_{j=0}^{n-1}
    \bigl(-\!\log \, \dist_\delta\bigl(f^j(x), \mathcal{C}\bigr)\bigr)
    \;\le\;
    \gamma.
    \]
\end{lemma}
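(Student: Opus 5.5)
The plan is to combine \cref{dist_integrable} with Birkhoff's ergodic theorem and dominated convergence. By \cref{dist_integrable}, the function \(x\mapsto |\log\dist(x,\mathcal C)|\) lies in \(L^1(\eta)\). In particular \(\eta(\mathcal C)=0\), since otherwise the integrand would be infinite on a set of positive measure. For each \(\delta\in(0,1)\) define
\[
g_\delta(x):=-\log\dist_\delta(x,\mathcal C).
\]
This equals \(-\log\dist(x,\mathcal C)\) when \(\dist(x,\mathcal C)<\delta\), and equals \(0\) otherwise. Thus \(0\le g_\delta\le |\log\dist(\cdot,\mathcal C)|\), so each \(g_\delta\) is \(\eta\)-integrable.

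The first step is to show \(\int g_\delta\,d\eta\to 0\) as \(\delta\to 0\). As \(\delta\downarrow 0\), the functions \(g_\delta\) decrease pointwise to \(0\) on \(M\setminus\mathcal C\), because each point off \(\mathcal C\) has positive distance to it and so eventually falls outside the \(\delta\)-neighbourhood. Since \(\eta(\mathcal C)=0\), the convergence holds \(\eta\)-almost everywhere. The integrable bound \(|\log\dist(\cdot,\mathcal C)|\) lets dominated convergence (or monotone convergence) conclude that \(\int g_\delta\,d\eta\to 0\). Given \(\gamma>0\), choose \(\delta>0\) with \(\int g_\delta\,d\eta\le\gamma\).

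The second step applies Birkhoff's ergodic theorem to the integrable function \(g_\delta\) and the ergodic measure \(\eta\). For \(\eta\)-almost every \(x\),
\[
\lim_{n\to\infty}\frac1n\sum_{j=0}^{n-1}g_\delta(f^j x)=\int g_\delta\,d\eta\le\gamma .
\]
The limit exists, so in particular the limsup is at most \(\gamma\), which is the claim. The \(\eta\)-null set of points whose orbits hit \(\mathcal C\) can be discarded, because \(\eta(\mathcal C)=0\) and \(\eta\) is invariant, so \(\bigcup_j f^{-j}\mathcal C\) is also \(\eta\)-null.

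No step is a serious obstacle. The only point needing care is the integrability input from \cref{dist_integrable}: it uses \(\lambda^c(\eta)>-\infty\), read as \(\log\|Df|_{E^c}\|\) having an integrable negative part, together with (S1). One should also note that \(\dist_\delta\) is defined using \(1\) off the \(\delta\)-neighbourhood, so \(g_\delta\ge 0\) and no sign issues arise. If \(\delta\) must be chosen uniformly over a family of measures, this argument does not give that. The statement only asks for \(\delta\) depending on \(\eta\), which is what the argument produces.
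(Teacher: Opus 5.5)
Your proposal is correct and follows the paper's own proof. Integrability from the preceding lemma gives $\eta(\mathcal C)=0$ and $\int -\log\dist_\delta(\cdot,\mathcal C)\,d\eta\le\gamma$ for small $\delta$, and Birkhoff's theorem then finishes; you simply spell out the dominated-convergence step that the paper compresses into ``using the definition of the $\delta$-truncated distance.''
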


\begin{proof}
    By \cref{dist_integrable}, the function \(\bigl|\log \, \dist\bigl(x, \mathcal{C}\bigr)\bigr|\) is \(\eta\)-integrable, and in particular, \(\eta(\mathcal{C}) = 0\). Using these facts together with the definition of the \(\delta\)-truncated distance \(\dist_\delta\), we deduce that for any \(\gamma > 0\), there exists \(\delta > 0\) such that
    \[
    \int
    -\log \, \dist_\delta\bigl(x,\mathcal{C}\bigr)
    \, d\eta
    \;\le\;
    \gamma.
    \]
    Because \(\eta\) is ergodic, an application of Birkhoff’s Ergodic Theorem to the function \(-\log \, \dist_\delta\bigl(x, \mathcal{C}\bigr)\) completes the proof.
\end{proof}

We now show that there exists a single \(\sigma \in (0,1)\) such that every \(\mu \in \mathcal{K}\) is \(\sigma^{-1}\)-expanding. As will become clear later, a uniform choice of \(\sigma\) across all \(\mu \in \mathcal{K}\) is necessary for establishing that every ergodic measure in \(\mathcal{K}\) is almost expansive at the same scale.

\begin{lemma}
    Any \(\mu \in \mathcal{K}\) is \(\sigma^{-1}\)-expanding with \(\sigma = \exp\left(-\frac{c_0}{9}\right)\).
\end{lemma}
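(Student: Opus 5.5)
To show that every \(\mu\in\mathcal K\) is \(\sigma^{-1}\)-expanding with \(\sigma=e^{-c_0/9}\), I will check that \(\mu\)-almost every \(x\) satisfies the two defining conditions of \(H(\sigma)\) in \cref{expanding_set}. For \(\sigma=e^{-c_0/9}\) we have \(3\log\sigma=-c_0/3\). So the first condition \eqref{non-unif. expand} asks for
\[
\limsup_{n\to\infty}\frac1n\sum_{j=0}^{n-1}\log\|Df(f^jx)^{-1}\|\le -\frac{c_0}{3}.
\]
By \cref{cone-full-norm} and the convention that hyperbolic times are taken relative to the center cone, this quantity should be read with \(\|Df(f^jx)^{-1}\|\) replaced by \(\|Df(f^jx)|_{E^c}\|^{-1}\). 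The bounded factor \(C\) from \cref{cone-full-norm} only shifts things by an additive constant. I plan to absorb it into the cone-field convention, or else into the strict inequality \(\lambda^c(\mu)>c_0/3\), if a margin is needed.

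\textbf{Step 1 (expansion).} Since \(\dim E^c=1\), the map \(x\mapsto\log\|Df(x)|_{E^c(x)}\|\) is an additive cocycle. Take \(\mu\in\mathcal K\), which is ergodic with \(\lambda^c(\mu)>c_0/3>-\infty\). Then \(\log\|Df|_{E^c}\|\) is \(\mu\)-integrable: its positive part is bounded because \(f\) is \(C^2\) on a compact manifold, and its negative part is integrable because the exponent is finite. Birkhoff's theorem then gives, for \(\mu\)-a.e.\ \(x\),
\[
\frac1n\sum_{j=0}^{n-1}\log\|Df(f^jx)|_{E^c}\|^{-1}\to-\lambda^c(\mu)<-\frac{c_0}{3}=3\log\sigma .
\]
This is \eqref{non-unif. expand} in the center-cone sense. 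I also need \(\mu(\mathcal C)=0\), so that almost every orbit avoids \(\mathcal C\) and \(x\in M\setminus\mathcal C\). This follows from \cref{dist_integrable}.

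\textbf{Step 2 (slow recurrence).} Since \(\lambda^c(\mu)>-\infty\), \cref{slow_rec} applies. For every \(\gamma>0\) it gives a \(\delta>0\) such that \eqref{slow rec} holds \(\mu\)-almost everywhere. To get a single full-measure set that works for all \(\gamma\), I take \(\gamma=1/k\) for \(k\in\mathbb N\) and intersect the countably many full-measure sets. Any \(\gamma>0\) is then handled by choosing \(1/k<\gamma\). Intersecting this with the full-measure set from Step 1 gives \(\mu(H(\sigma))=1\).

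\textbf{Main obstacle.} The delicate point is the comparison between the full-norm condition in \cref{expanding_set} and the center exponent. Literally, \(\|Df^{-1}\|\ge\|Df|_{E^c}\|^{-1}\), so the full-norm average is at least \(-\lambda^c(\mu)\). Using the upper bound from \cref{cone-full-norm}, the full-norm average is at most \(\log C-\lambda^c(\mu)\), and \(\log C\) need not be small compared with \(c_0\). I would therefore fix the center-cone interpretation explicitly, as the paper declares after \cref{cone-full-norm}. As a fallback, I would note that the \(\log C\) loss is uniform over \(\mathcal K\). One can then pass to an iterate \(f^N\), for which the domination constant becomes negligible after dividing by \(N\), or adjust \(\sigma\) uniformly. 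Either way the choice of \(\sigma\) stays independent of \(\mu\), which is the property needed later for almost expansivity at a common scale.
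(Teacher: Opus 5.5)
Your proposal is correct and follows the paper's argument: Birkhoff's theorem applied to the one-dimensional center cocycle gives the expansion condition, and \cref{slow_rec} gives slow recurrence. Your countable intersection over $\gamma=1/k$ and your check that $\mu(\mathcal C)=0$ supply details the paper leaves implicit. The only difference is how your ``main obstacle'' is handled: the paper passes to an adapted metric, in which $E^u\perp E^c$ and one-step domination give $\|Df(x)^{-1}\|=\|Df(x)|_{E^c(x)}^{-1}\|$ exactly. The full-norm condition of \cref{expanding_set} then holds literally, with no $\log C$ loss, so neither the reinterpretation nor your fallbacks (iterates or a change of $\sigma$) are needed.
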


\begin{proof}
    Since \(\mu\) is ergodic, we have
    \[
    \lambda^c(x) = \lambda^c(\mu) = \int \log\|Df(x)|_{E^c}\| \, d\mu > \frac{c_0}{3}
    \]
    for \(\mu\)-almost every \(x\).

    By dominated splitting and uniform expansion along \(E^u\), after replacing the ambient metric by an adapted one if necessary, we may assume that
    \[
    \|Df(x)^{-1}\|=\|Df(x)|_{E^c(x)}^{-1}\|
    \]
    for every \(x \notin \mathcal C\). Hence, by Birkhoff's ergodic theorem,
    \[
    \lim_{n \to \infty} \frac{1}{n} \sum_{j=0}^{n-1} \log \|Df(f^j(x))^{-1}\|
    =
    - \lambda^c(\mu)
    <
    -\frac{c_0}{3}
    =
    3\log \sigma
    \]
    for \(\mu\)-almost every \(x\), where \(\sigma = \exp(-c_0/9)\).

    This fact, combined with \cref{slow_rec}, gives the assumptions needed for \(H(\sigma)\) at \(\mu\)-almost every point.
\end{proof}

\subsubsection{\texorpdfstring{Expansivity Inside \(\mathcal{K}\)}{Expansivity Inside K}}
Finally, we conclude the proof that any \(\sigma^{-1}\)-expanding measure \(\mu\) is almost expansive at scale \(\delta_1\), which implies every \(\mu\in\mathcal{K}\) is almost expansive.

\begin{lemma}\label{expansivity}
    For any \(\sigma^{-1}\)-expanding measure \(\mu\), there exists \(\delta_1 > 0\) such that for any \(\epsilon < \delta_1\) and for \(\mu\)-almost every \(x\),
    \[
    \Gamma_{\epsilon}(x) = \{x\}.
    \]
\end{lemma}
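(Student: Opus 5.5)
The plan is to use the fact that $\mu$-almost every point has infinitely many hyperbolic times. Each hyperbolic time gives a neighborhood that is backward-contracted onto a ball of fixed radius $\delta_1$. An infinite Bowen ball of radius $\epsilon<\delta_1$ must sit inside these neighborhoods, so it collapses to a point.

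\emph{Step 1: infinitely many hyperbolic times.} Since $\mu(H(\sigma))=1$, \cref{lem:infty_hyp_time} gives $\nu>0$ and $\delta>0$, depending only on $\sigma$ and $f$, with the following property: for $\mu$-a.e.\ $x$ and every large $N$ there are at least $\nu N$ $(\sigma,\delta)$-hyperbolic times in $[1,N]$. In particular $x$ has infinitely many hyperbolic times $n_1<n_2<\cdots\to\infty$. We then take $\delta_1$ from \cref{hyp_times_prop} for this $\sigma$ and $\delta$; it is uniform over all such $x$. By \cref{cone-full-norm}, it is harmless that hyperbolic times are measured along the center cone rather than with the full norm $\|Df^{-1}\|$.

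\emph{Step 2: the Bowen ball lies in the hyperbolic neighborhood.} Fix $\epsilon<\delta_1$ and $y\in\Gamma_\epsilon(x)$, and let $n=n_i$ be a hyperbolic time. \cref{hyp_times_prop} gives a neighborhood $V_x$ such that $f^n|_{V_x}$ is a diffeomorphism onto $B_{\delta_1}(f^n x)$ and the inverse branches are $\sigma^{k/2}$-contracting. We need $y\in V_x$, and this is the main obstacle, since $y$ being $\epsilon$-close along the whole orbit does not by itself place $y$ on the right inverse branch.

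I would argue by backward induction along the orbit, using the construction in the proof of \cref{hyp_times_prop}. For $0\le j\le n$, let $V_j\ni f^{j}x$ be the image under the inverse branch of $f^{n-j}$ of the ball $B_{\delta_1}(f^nx)$. Then $V_j$ contains a ball around $f^jx$ of radius comparable to $\delta_1$, up to the distortion and injectivity scale near $f^jx$. Suppose $f^{j+1}y\in V_{j+1}$ and $d(f^jy,f^jx)<\epsilon$. Because $f$ is a local diffeomorphism off $\mathcal C$ and the hyperbolic-time condition keeps $f^jx$ at distance at least $\sigma^{b(n-j)}$ from $\mathcal C$, we get $f^jy\in V_j$. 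One must check that the preimage of $f^{j+1}y$ inside $V_j$ is the unique preimage within distance $\epsilon$ of $f^jx$. If this fails near $\mathcal C$, I would shrink $\epsilon$ further: injectivity of $f$ on balls of radius $\epsilon$ about points at distance greater than about $2\epsilon$ from $\mathcal C$ handles it. If needed, one also replaces the radius by one that is uniform in the hyperbolic-time neighborhoods, which the contraction estimate allows, since the pulled-back sets shrink only geometrically in $k$.

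\emph{Step 3: conclude.} Once $y\in V_x$ for each hyperbolic time $n_i$, the contraction estimate with $k=n_i$ gives
\[
d(x,y)\le \sigma^{n_i/2}\, d(f^{n_i}x,f^{n_i}y)\le \sigma^{n_i/2}\epsilon\to 0 .
\]
Hence $y=x$ and $\Gamma_\epsilon(x)=\{x\}$ for $\mu$-a.e.\ $x$. Since $\delta_1$ depends only on $\sigma$ and $\delta$, the same scale works for every $\mu\in\mathcal K$ by the uniform choice $\sigma=e^{-c_0/9}$.
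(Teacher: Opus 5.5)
Step~2 cannot be completed, and no choice of $\epsilon$ or radius will fix it. You correctly identified it as the crux. The paper's own proof applies the contraction estimate of \cref{hyp_times_prop} directly to $y\in\Gamma_\epsilon(x)$, so it tacitly assumes $y\in V_x$; your backward induction tries to justify exactly that.

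The induction breaks at any $j$ with $\dist(f^jx,\mathcal C)<\epsilon/2$ when $\mathcal C$ is a fold. There, $f^{j+1}y$ has the preimage $z\in V_j$ and also a second preimage on the other side of $\mathcal C$, which can lie within $\epsilon$ of $f^jx$. Nothing forces $f^jy=z$. Shrinking $\epsilon$ only handles indices with $\dist(f^jx,\mathcal C)$ larger than about $2\epsilon$. The hyperbolic-time condition gives only $\dist(f^jx,\mathcal C)\ge\sigma^{b(n-j)}$, which tends to $0$ as $n-j\to\infty$, while $\epsilon$ is fixed before the hyperbolic times $n_i\to\infty$. The size of $V_j$ is irrelevant, because the offending preimage lies outside $V_j$. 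Your induction does work when $f$ is injective on all balls of some fixed radius, and a fold destroys exactly that.

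In fact the statement is false in the fold setting the paper targets, already at $j=0$. For $f_a(x)=1-ax^2$ we have $f_a(-x)=f_a(x)$, so $-x\in\Gamma_\epsilon(x)\setminus\{x\}$ whenever $0<|x|<\epsilon/2$. Likewise $f_\alpha(\theta,-t)=f_\alpha(\theta,t)$ for the Viana map. Hence, for these maps, $\mathrm{NE}(\epsilon)$ contains $\{x: 0<\dist(x,\mathcal C)<\epsilon/2\}$ for every $\epsilon>0$. Any $\sigma^{-1}$-expanding measure charging every neighbourhood of $\mathcal C$ therefore has $\mu(\mathrm{NE}(\epsilon))>0$ at every scale. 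An example is the absolutely continuous invariant measure of a Benedicks--Carleson map: it lies in $\mathcal K$ (as in \cref{existence_SRB}), and its support is the core interval, which contains $0$ in its interior.

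What the hyperbolic-time argument actually proves is weaker: $x$ is the only point of $\Gamma_\epsilon(x)$ that lies in $V_x=V_x(n_i)$ for infinitely many $i$. So the expansivity input to \cref{ct} cannot come from this lemma as stated. The conclusion has to be weakened, or the notion of non-expansive point changed; sharpening this proof will not do it.
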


\begin{proof}
    \cref{lem:infty_hyp_time} guarantees that for \(\mu\)-almost every \(x\), there are infinitely many hyperbolic times \(n_i(x)\), since \(\mu\) is \(\sigma^{-1}\)-expanding. Therefore, applying \cref{hyp_times_prop}, we conclude that there exists some \(\delta_1 > 0\) such that if \(y \in \Gamma_{\epsilon}(x)\) with \(\epsilon < \delta_1\), then for any \(n_i\), we have
    \[
    \dist(x, y) \leq \sigma^{n_i/2} \, \dist(f^{n_i}(x), f^{n_i}(y)) \leq \sigma^{n_i/2} \epsilon.
    \]
    Since \(n_i \to \infty\) as \(i \to \infty\), we deduce that \(x = y\).
\end{proof}

\subsubsection{Pressure Gap}
\begin{theorem}\label{expansivity-gap}
We have the pressure gap for the set of non-expansive points
\[
P_{\mathrm{exp}}^{\perp}(\varphi) \;<\; P_{\mathrm{top}}(\varphi; f ).
\]
\end{theorem}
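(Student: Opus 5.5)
The pressure gap comes from combining the two halves of \cref{sec:exp}: measures in \(\mathcal K\) are almost expansive at a \emph{uniform} scale, and measures outside \(\mathcal K\) have free energy uniformly below \(P_{\mathrm{top}}(\varphi;f)\).

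First I fix the scale. Set \(\sigma=\exp(-c_0/9)\). By the lemma preceding \cref{expansivity}, every \(\mu\in\mathcal K\) is \(\sigma^{-1}\)-expanding with this same \(\sigma\). Next I fix \(\delta\), and with it \(\delta_1\), uniformly. For \(\delta\), the choice in \cref{lem:infty_hyp_time} depends only on \(\sigma\) and \(f\). For \(\delta_1\), the radius in \cref{hyp_times_prop} depends only on \(\sigma\) and \(\delta\). The proof of \cref{expansivity} then yields a scale \(\delta_1\) that does not depend on \(\mu\in\mathcal K\).

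This uniformity is the step I expect to need the most care. The statement of \cref{expansivity} quantifies ``there exists \(\delta_1\)'' after \(\mu\), and the slow-recurrence condition in \(H(\sigma)\) allows \(\delta\) to depend on the point and on \(\gamma\). I would handle this by rereading \cref{lem:infty_hyp_time}: the \(\delta\) there is chosen from \(\sigma\) alone, via the Pliss lemma with a fixed small \(\gamma\) determined by \(\sigma\) and \(b\). For each \(\mu\), \cref{slow_rec} applied to that \(\gamma\) gives some \(\delta_\mu\). Any smaller truncation only affects the Pliss argument through the fixed \(\gamma\), so one can work with a single \(\delta\) valid for all \(\mu\in\mathcal K\) and obtain a single \(\delta_1\).

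With \(\delta_1\) fixed, take any \(\epsilon<\delta_1\). If \(\mu\in\mathcal M_e(f)\) satisfies \(\mu(\mathrm{NE}(\epsilon))>0\), then \cref{expansivity} shows \(\mu\notin\mathcal K\). By \cref{low_pressure_outside} (or \cref{low_pressure_outside_1d} in the interval case),
\[
h_\mu(f)+\int\varphi\,d\mu\le P_{\mathrm{top}}(\varphi;f)-\frac{c_0}{6}.
\]
Taking the supremum gives \(P_{\mathrm{exp}}^{\perp}(\varphi,\epsilon)\le P_{\mathrm{top}}(\varphi;f)-c_0/6\) for every \(\epsilon<\delta_1\). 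Letting \(\epsilon\to0\), with the limit existing by monotonicity, gives \(P_{\mathrm{exp}}^{\perp}(\varphi)\le P_{\mathrm{top}}(\varphi;f)-c_0/6<P_{\mathrm{top}}(\varphi;f)\).

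Two small points remain. If no ergodic measure charges \(\mathrm{NE}(\epsilon)\), the supremum is \(-\infty\) and the inequality holds trivially. In the interval case, the measures excluded by \cref{low_pressure_outside_1d} are those with \(\int\log|f'|\,d\mu\le c_0/3\), which is exactly the complement of \(\mathcal K\), so the same argument applies with gap \(2c_0/3\).
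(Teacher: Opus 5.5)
Your outline is the paper's own proof. You split ergodic measures into $\mathcal K$ and its complement, bound the free energy outside $\mathcal K$ by \cref{low_pressure_outside} (or \cref{low_pressure_outside_1d}), and use \cref{expansivity} inside $\mathcal K$. You are right that Case~2 needs a scale $\delta_1$ independent of $\mu\in\mathcal K$; the paper's proof simply asserts one. However, your justification does not supply it.

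\begin{itemize}
\item Since $-\log\dist_\delta(x,\mathcal C)$ is non-decreasing in $\delta$, slow recurrence with fixed $\gamma$ at truncation $\delta_\mu$ passes to smaller truncations, not larger ones.
\item A single $\delta$ valid on all of $\mathcal K$ therefore requires $\inf_{\mu\in\mathcal K}\delta_\mu>0$. That amounts to uniform smallness over $\mathcal K$ of $\int_{\{\dist(x,\mathcal C)<\delta\}}-\log\dist(x,\mathcal C)\,d\mu$.
\item The condition $\lambda^c(\mu)>c_0/3$ together with (S1) only bounds $\int-\log\dist(x,\mathcal C)\,d\mu$ uniformly, not its tail near $\mathcal C$. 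Moreover, the radius $\delta_1$ produced by \cref{hyp_times_prop} degenerates as $\delta\to0$.
\item The $\delta$ in \cref{lem:infty_hyp_time} can depend on $\sigma$ and $f$ alone only because, as in \cite{ABV00}, the slow-recurrence truncation is uniform over the points considered (as on $H$). For generic points of an arbitrary $\mu\in\mathcal K$ you only have the $\mu$-dependent $\delta_\mu$ of \cref{slow_rec}.
\end{itemize}

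There is also a more basic problem that no uniform scale would fix. \cref{hyp_times_prop} gives the backward contraction $\dist(x,y)\le\sigma^{n_i/2}\dist(f^{n_i}x,f^{n_i}y)$ only for $y\in V_x$, the domain of the inverse branch of $f^{n_i}$ through $x$. When $f$ folds along $\mathcal C$, $y\in\Gamma_\epsilon(x)$ does not force $y\in V_x$.

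For $f_a(x)=1-ax^2$ and $0<|x|<\epsilon/2$ one has $f_a(-x)=f_a(x)$. So $-x\in\Gamma_\epsilon(x)\setminus\{x\}$, and $\mathrm{NE}(\epsilon)\supset(-\epsilon/2,\epsilon/2)\setminus\{0\}$ for every $\epsilon>0$. The skew product $f_\alpha$ has the same symmetry $t\mapsto -t$. This has two consequences:
\begin{itemize}
\item The absolutely continuous invariant measure of a Benedicks--Carleson map lies in $\mathcal K$ by \cref{existence_SRB}. By exactness its support is the whole core interval, which contains $0$. It therefore charges $\mathrm{NE}(\epsilon)$ at every scale, contradicting \cref{expansivity}.
\item With the one-sided Bowen balls used here, any non-atomic ergodic equilibrium state whose support contains $0$ gives $P_{\mathrm{exp}}^{\perp}(\varphi,\epsilon)=P_{\mathrm{top}}(\varphi;f)$ for every $\epsilon$.
\end{itemize}

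The paper's proof makes the same two moves, so it does not close these gaps either. Case~2 needs a genuinely different argument, or a different notion of obstruction to expansivity, not just a careful choice of constants.
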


\begin{proof}
By definition, for any \(\epsilon < \delta_1\),
\[
P_{\mathrm{exp}}^{\perp}(\varphi,\epsilon)
\;=\;
\sup_{\mu \in \mathcal{M}_e(f)}
\Bigl\{
h_\mu(f) \;+\; \int \varphi \, d\mu
\;:\;
\mu\bigl(\mathrm{NE}(\epsilon)\bigr) > 0
\Bigr\}.
\]
Consider two cases for a measure \(\mu\):

\smallskip
\noindent
\textbf{Case 1:} \(\mu \notin \mathcal{K}\).
By \cref{low_pressure_outside} (or the interval pressure estimate following it for \cref{thm:main-1d}), we have a uniform bound:
\[
h_{\mu}(f)+\int \varphi\,d\mu
\;<\;
P_{\mathrm{top}}(\varphi; f )- \frac{c_0}{6}.
\]
In particular,
\[
\sup_{\mu\in\mathcal M_e(f)\setminus\mathcal K}
\left\{h_{\mu}(f)+\int \varphi\,d\mu\right\}
\le P_{\mathrm{top}}(\varphi;f)-\frac{c_0}{6}.
\]
Hence, such measures cannot achieve pressure \(P_{\mathrm{top}}(\varphi; f )\).

\smallskip
\noindent
\textbf{Case 2:} \(\mu \in \mathcal{K}\).
By \cref{expansivity}, there exists \(\delta_1 > 0\) so that for all \(\epsilon < \delta_1\),
\[
\mu\bigl(\mathrm{NE}(\epsilon)\bigr)
\;=\;
0.
\]
Thus, for sufficiently small \(\epsilon\), these measures place no mass on \(\mathrm{NE}(\epsilon)\) and do not contribute to \(P_{\mathrm{exp}}^{\perp}(\varphi,\epsilon)\).

In summary, for every \(\epsilon < \delta_1\), every measure that contributes to \(P_{\mathrm{exp}}^{\perp}(\varphi,\epsilon)\) lies outside \(\mathcal{K}\), and each such measure has pressure below \(P_{\mathrm{top}}(\varphi; f )\). Consequently,
\[
P_{\mathrm{exp}}^{\perp}(\varphi)
\;=\;
\lim_{\epsilon\rightarrow 0} P_{\mathrm{exp}}^{\perp}(\varphi,\epsilon)
\;\le\;
P_{\mathrm{top}}(\varphi;f)-\frac{c_0}{6}
\;<\;
P_{\mathrm{top}}(\varphi; f ).
\]
This completes the proof.
\end{proof}

\subsection{Non-uniform Specification and Regularity}\label{sec:spec}
In this section, we first give an explicit description of how to decompose the collection of all orbit segments \((x,n)\) into \((\mathcal{P}, \mathcal{G},\mathcal{S})\). We show that \(\mathcal{G}\) has the specification property at any scale \(\varepsilon\), and that orbits not in \(\mathcal{G}\) carry small pressure. In addition, any H\"older continuous \(\varphi\) has the Bowen property on \(\mathcal{G}\) at some fixed scale \(\varepsilon_0\).

\subsubsection{Construction of the Orbit Decomposition}
Let \(\psi^c(x) = \log \|Df|_{E^c(x)}\|\), so that \(\frac{1}{n} S_n \psi^c(x)\) represents the average expansion (or contraction) undergone by an orbit segment \((x, n)\) in the center direction. Additionally, given \(\mu\in \mathcal{M}_e(f)\), \(\lambda^c(\mu) =\int \psi^c\, d\mu\) is the center Lyapunov exponent of \(\mu\).

To define the orbit decomposition (\cref{fig}), for some small \(r>0\) we consider the following collection of orbit segments:
\begin{equation}\label{orb-decomp}
\begin{split}
    \mathcal{P} &:=\emptyset, \\
    \mathcal{G} &:=\{(x,n)\in \Lambda\times \N:\,S_{k}\psi^c(f^{n-k}x) > kr,\  \forall\, 0< k\leq n\},\\
    \mathcal{S} &:=\{(x,n)\in \Lambda\times \N:\, S_n\psi^c(x) \le nr\}.
\end{split}
\end{equation}

The collection \(\mathcal{G}\) is chosen so that along orbit segments from \(\mathcal{G}\), the center direction is uniformly backward contracting. In addition, \(f\) is locally invertible along these ``good'' orbits.

Together with the trivial collection \(\{(x,0) : x \in \Lambda\}\) for \(\mathcal{P}\), these collections define a decomposition of any orbit \((x, n) \in \Lambda \times \mathbb{N}\) as follows: let \(s\) be the largest integer in \(\{0, \cdots, n\}\) such that \( S_{s}\psi^c(f^{n-s}x) \le sr\), meaning \((f^{n-s}x, s) \in \mathcal{S}\).

We must then have \((x,  n-s) \in \mathcal{G}\), since if \(S_{k_0}\psi^c(f^{n-s-k_0} x) \le k_0r\) for some \(0 < k_0 \leq n-s\), then
\[
S_{s+k_0}\psi^c(f^{n-s-k_0}x) = S_{k_0}\psi^c(f^{n-s-k_0}x) + S_{s}\psi^c(f^{n-s}x) \le (s+k_0)r,
\]
which would contradict the maximality of \(s\).

\begin{figure}[ht]
  \centering
  \begin{tikzpicture}[thick,scale=0.7, transform shape]  % add ,scale=.8,transform shape if desired
% --------------------------------------------------------------
% TOP orbit :  x ---------------→ f^n(x)
% --------------------------------------------------------------
\coordinate (x0) at (0,0);
\coordinate (xn) at (10,0);
\draw (x0) -- (xn);

% end ticks + labels
\foreach \p/\lab in {x0/$x$,xn/$f^{n-1}x$}{
  \draw (\p) ++(0,-.18) -- ++(0,.36);
  \node[below=3pt] at (\p) {\lab};
}

% blue “good’’ arcs (all k)
\def\flat{0.7}  % 0<\flat<1  → flatter;  1 = original bumpiness

\foreach \start [count=\k from 0] in {0,...,9}{%
  \pgfmathsetmacro{\h}{\flat*(1 + 0.18*(9-\k))}
  \draw[blue,->,line width=.4pt]
        (\start,0)
        .. controls +(.4,\h) and +(-.45,\h) ..
        (10,0);
}

% red “bad’’ arcs (subset)
\foreach \start in {9,7,6}{%
  \pgfmathsetmacro{\h}{\flat*(1 + 0.18*(9-\start))}  % identical height as blue arc
  \draw[red,->,line width=1pt]              % drawn AFTER the blue arcs
        (\start,0)
        .. controls +(.4,\h) and +(-.45,\h) ..
        (10,0);
}

% big double arrow
\node at ($(x0)!0.5!(xn)+(0,-1)$) {\Large$\Downarrow$};

% --------------------------------------------------------------
% LOWER orbit :  0 -- f^p(x) ----→ (good) ----→ t
% --------------------------------------------------------------
\coordinate (x1) at (0,-3);
\coordinate (xs) at (6,-3);
\coordinate (xn) at (10,-3);
\draw (x1 |- xs) -- (xn);

% ticks + labels
\foreach \p/\lab in {x1/$x$,xs/$f^{n-s-1}x$,xn/$f^{n-1}x$}{
  \draw (\p) ++(0,-.18) -- ++(0,.36);
  \node[below=3pt] at (\p) {\lab};
}

% red arc (bad prefix)
\draw[red,->,line width=1pt]
      (xs |- xn) .. controls +(.4,1.4) and +(-.6,1.4) .. (xn);

% blue family (good core)
% choose once:
\def\flat{0.9}  % 0<\flat<1  → flatter;  1 = original bumpiness

\foreach \start in {0,1,2,3,4,5}{%
  \pgfmathsetmacro{\h}{\flat*(1+0.18*(6-\start)}    
  \draw[blue,->,line width=.4pt]
        (\start,-3) 
        .. controls +(.4,\h) and +(-.45,\h) ..
        (6,-3);
}

% braces + set labels
\draw[decorate,decoration={brace,mirror,amplitude=5pt}]
      ($(x1 |- xs)+(0.1,-0.8)$) -- ($(xs)+(-0.1,-0.8)$)
      node[midway,below=10pt] {$\in \mathcal{G}$};

\draw[decorate,decoration={brace,mirror,amplitude=5pt}]
      ($(xs)+(0.1,-0.8)$) -- ($(xn)+(-0.1,-0.8)$)
      node[midway,below=10pt] {$\in \mathcal{S}$};

% --------------------------------------------------------------
% RIGHT-HAND legend
% --------------------------------------------------------------
% exemplar blue and red arcs with inequalities
\coordinate (legB0) at (12,1.2);
\coordinate (legB1) at (14,1.2);
\draw[blue,->, line width=0.4pt] (legB0) .. controls +(0.3,.6) and +(-0.3,.6) .. (legB1);
\node[right=4pt] at (legB1) {$S_{k}\psi^c(f^{n-k}x)> kr$};

\coordinate (legR0) at (12,0);
\coordinate (legR1) at (14,0);
\draw[red,->,line width=1pt] (legR0) .. controls +(0.3,.6) and +(-.3,.6) .. (legR1);
\node[right=4pt] at (legR1) {$S_{k}\psi^c(f^{n-k}x)\le kr$};

% yellow “collapsed’’ picture
\begin{scope}[shift={(12,-2)}]
  \fill[yellow!35] (-0.2,-1.8) rectangle (4.2,1.2);
  \draw[black, line width=.2pt] (-0.2,-1.8) rectangle (4.2,1.2);

  % two red hops
  \foreach \x/\h in {0/1.0,2/1.0}{
    \draw[red,->,line width=1pt]
          (\x,0) .. controls +(0.3,.6) and +(-0.3,.6) .. (\x+2,0);
  }

  % downward arrow
  \node at (2,-0.35) {$\Downarrow$};

  % long red hop
  \draw[red,->,line width=1pt]
        (0,-1.5) .. controls +(0.3,1) and +(-0.3,1) .. (4,-1.5);
\end{scope}

% --------------------------------------------------------------
\end{tikzpicture}
  \caption{Orbit decomposition}
  \label{fig}
\end{figure}

\subsubsection{Specification on \texorpdfstring{$\mathcal{G}$}{G}}
To prove specification on \(\mathcal{G}\), we use backward contraction along good orbit segments together with topological exactness. More precisely, since \(f|_\Lambda\) is topologically exact, so for every \(\rho>0\), there exists \(\tau=\tau(\rho)\in\mathbb N\) such that for every \(x\in \Lambda\),
\[
f^j(B(x,\rho))=\Lambda
\qquad\text{for some }0\le j\le \tau.
\]

We begin by establishing that every ``good'' orbit in \(\mathcal{G}\) exhibits backward uniform contraction:

\begin{lemma}\label{back_contract}
There exist $\varepsilon_0>0$ and $C\ge 1$ such that for every $(x,n)\in\mathcal G$ and every $y\in B_n(x,\varepsilon_0)$ we have
\begin{equation}\label{eq:backward-contraction}
d\bigl(f^k x, f^k y\bigr)
\;<\;
C e^{-r(n-1-k)}\, d\bigl(f^{n-1}x, f^{n-1}y\bigr)
\;<\;
C e^{-r(n-1-k)}\,\varepsilon_0
\end{equation}
for all $0\le k\le n-1$.
\end{lemma}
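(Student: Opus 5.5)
The argument runs backward along the orbit, one step at a time, from time $n-1$. The key input is that the center direction is expanded on average along every terminal block of a good segment, while $E^u$ is uniformly expanded. For $(x,n)\in\mathcal G$, the defining inequality $S_k\psi^c(f^{n-k}x)>kr$ for all $0<k\le n$ says that every terminal block $f^{n-k}x,\dots,f^{n-1}x$ has average center expansion exceeding $r$. After passing to an adapted metric, as in the proof that measures in $\mathcal K$ are expanding, we have $\|Df(z)^{-1}\|=\|Df(z)|_{E^c(z)}^{-1}\|=e^{-\psi^c(z)}$. Hence the products $\prod_{j=n-k}^{n-1}\|Df(f^jx)^{-1}\|$ are bounded by $e^{-rk}$. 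This is the first half of the definition of a hyperbolic time, with $\sigma=e^{-r}$, although $n$ here is the segment length rather than a time where we take an image.

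The second half of the hyperbolic-time condition, the distance to $\mathcal C$, is not built into $\mathcal G$. I would recover it from the non-flatness condition (S1). Since $\psi^c\le \log B+\beta\log\dist(\cdot,\mathcal C)$ and $\psi^c$ is bounded above by $\log\|Df\|_\infty$, a point $f^{n-k}x$ very close to $\mathcal C$ would make $S_k\psi^c(f^{n-k}x)$ very negative. It would then violate the good condition for that block, or for a slightly longer one, once the remaining bounded terms are accounted for. This yields a lower bound $\dist(f^{n-k}x,\mathcal C)\ge e^{-Ak}$ for some constant $A$ depending on $r$, $B$, $\beta$ and $\sup\psi^c$. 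That is weaker than $\sigma^{bk}$, but it is enough for a direct induction.

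The induction itself follows \cref{hyp_times_prop} rather than quoting it. Let $d_k=d(f^kx,f^ky)$, with $d_{n-1}<\varepsilon_0$. I would show by downward induction that $d_k\le Ce^{-r(n-1-k)}d_{n-1}$. At each step, $f^ky$ lies in a region where $f$ is a local diffeomorphism, because $d_k<\varepsilon_0$ is small compared with $\dist(f^kx,\mathcal C)$. There $f^{k+1}y$ is obtained from $f^ky$ by the inverse branch of $f$ through $f^kx$. Using (S2), the inverse derivative along the segment joining $f^kx$ and $f^ky$ is $\|Df(f^kx)^{-1}\|\exp\bigl(Bd_k/\dist(f^kx,\mathcal C)^\beta\bigr)$. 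Summing the error exponents over $k$ gives $\sum_k B\,d_k\,e^{A\beta(n-k)}$. With $d_k\lesssim e^{-r(n-1-k)}\varepsilon_0$, this sum is bounded by a constant, giving bounded distortion, provided $A\beta<r$.

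The main obstacle is arranging $A\beta<r$, i.e.\ keeping backward iterates away from $\mathcal C$ at a rate dominated by the contraction. I would handle this with a Pliss-type argument within the good segment. Alternatively, I would shrink $\varepsilon_0$ and use the constraint $d_k<\varepsilon_0$ together with the inductive bound to show that $d_k<\dist(f^kx,\mathcal C)/2$ and that the error terms form a convergent geometric series. The resulting product of error factors gives the constant $C$.

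The step requiring the most care is choosing the correct inverse branch. Since $y\in B_n(x,\varepsilon_0)$, each $f^ky$ is $\varepsilon_0$-close to $f^kx$. Injectivity of $f$ on small balls away from $\mathcal C$ then identifies $f^ky$ with the local inverse branch applied to $f^{k+1}y$. The second inequality in \eqref{eq:backward-contraction} is immediate from $d(f^{n-1}x,f^{n-1}y)<\varepsilon_0$.
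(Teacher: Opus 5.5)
\textbf{There is a genuine gap, and it sits exactly at the step you flag as the main obstacle. Neither of your proposed remedies closes it.}

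Membership in $\mathcal G$ together with (S1) only yields $\dist(f^{n-k}x,\mathcal C)\gtrsim e^{-Ak}$, where $A\beta=\sup\psi^c-r$. Since $\sup\psi^c\ge c_0\ge 3r$, this gives $A\beta\ge 2r>r$. So your bound on the (S2) error terms, $B\,d_k\,\dist(f^kx,\mathcal C)^{-\beta}\lesssim \varepsilon_0 e^{(A\beta-r)(n-k)}$, is not summable.

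\begin{itemize}
\item A Pliss-type argument cannot help. \cref{pliss} produces good times at positions of its own choosing, whereas here the endpoint $n$ is prescribed by $(x,n)\in\mathcal G$.
\item Shrinking $\varepsilon_0$ cannot help either. The constant $\varepsilon_0$ must be uniform over $\mathcal G$, while $\dist(x,\mathcal C)$ is not bounded below on $\mathcal G$.
\item More fundamentally, your last step identifies $f^ky$ with the image of $f^{k+1}y$ under the inverse branch through $f^kx$. This uses injectivity of $f$ on $B(f^kx,\varepsilon_0)$, which fails as soon as that ball meets $\mathcal C$. Your induction only controls the branch preimage of $f^{k+1}y$, not $f^ky$ itself.
\end{itemize}

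\textbf{No argument can give the inequality for all $y\in B_n(x,\varepsilon_0)$, because it fails across a fold.}

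Take $f_a(x)=1-ax^2$ as in \cref{BC}, or the Viana skew product with the pair $(\theta,\pm t)$. Pick a Lebesgue-typical $x$ with $0<|x|<\varepsilon_0/2$, and pick $n\ge2$ with $(x,n)\in\mathcal G$. Such $n$ exist by \cref{pliss} applied to the sequence $\psi^c(f^jx)$: membership in $\mathcal G$ is exactly the Pliss condition on all terminal blocks, the full block included.

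Then $y=-x$ satisfies $f^jy=f^jx$ for $j\ge1$, so $y\in B_n(x,\varepsilon_0)$. Yet $d(f^{n-1}x,f^{n-1}y)=0<d(x,y)$, which violates the inequality at $k=0$.

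The paper's proof does not get around this either. It asserts ``by compactness'' that the backward derivative bound holds on an $\varepsilon_0$-neighborhood of $f^{n-1}x$ uniformly in $n$. It also tacitly treats $f^ky$ as the inverse-branch image of $f^{n-1}y$. These are precisely the two points that fail once good segments approach $\mathcal C$.

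\textbf{What a provable version needs.}
\begin{itemize}
\item A hyperbolic-time distance condition in $\mathcal G$, for example $\dist_\delta(f^{n-k}x,\mathcal C)\ge e^{-brk}$ with $b\beta$ small, so that \cref{hyp_times_prop} supplies the inverse branch and its distortion control.
\item A conclusion restricted to points on that inverse branch, rather than the whole Bowen ball.
\end{itemize}
After these changes, the uses of the lemma in \cref{specification,bowen-property} would have to be revisited.
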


\begin{proof}
From $(x,n)\in\mathcal G$ and the definition of $\psi^c$ we obtain a uniform backward contraction along the center:
\[
\bigl\|Df^{-(n-1-k)}\big|_{E^c(f^{n-1} x)}\bigr\|
< e^{-r(n-1-k)},\qquad 0\le k\le n-1.
\]
Uniform expansion along $E^u$ gives
\[
\bigl\|Df^{-(n-1-k)}\big|_{E^u(f^{n-1} x)}\bigr\|
< e^{-r(n-1-k)}.
\]
Since the system is partially hyperbolic, the $Df$-invariant splitting
$TM = E^u \oplus E^c$ is continuous and dominated, with $\dim E^c = 1$. In particular, the angle between $E^u$ and $E^c$ is uniformly bounded away from zero, so there is a constant $C\ge1$ such that
\[
\bigl\|Df^{-(n-1-k)}(z)\bigr\|
< C e^{-r(n-1-k)}.
\]
for every point $z$ in a small neighborhood of $f^{n-1} x$ and every $0\le k\le n-1$. Shrinking this neighborhood if needed and using compactness, we may choose $\varepsilon_0>0$ so that whenever $d(z,f^{n-1} x)<\varepsilon_0$ the above estimate holds for all such $k$.

Now take $y\in B_n(x,\varepsilon_0)$. Then
$d(f^{n-1} x,f^{n-1} y)<\varepsilon_0$. Let $\gamma$ be a minimizing geodesic from $f^{n-1} x$ to $f^{n-1} y$; it is contained in the $\varepsilon_0$-ball around $f^{n-1} x$. For each $0\le k\le n-1$ we have
\begin{align*}
    d\bigl(f^k x,f^k y\bigr)
&= d\bigl(f^{-(n-1-k)} f^{n-1} x, f^{-(n-1-k)} f^{n-1} y\bigr) \\
&< \sup_{w\in\gamma} \bigl\|Df^{-(n-1-k)}(w)\bigr\|
\cdot d\bigl(f^{n-1} x,f^{n-1} y\bigr),
\end{align*}
and the supremum is bounded by $C e^{-r(n-1-k)}$. This gives \eqref{eq:backward-contraction}.
\end{proof}

\begin{theorem}\label{specification}
    The collection \(\mathcal{G}\) has specification at any scale \(\varepsilon>0\).
\end{theorem}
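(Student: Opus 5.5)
The plan is the standard "pull back a uniform ball along good segments, then use exactness to bridge the gaps" argument. Fix $\varepsilon>0$; it suffices to treat $\varepsilon<\varepsilon_0$, with $\varepsilon_0$ and $C$ from \cref{back_contract}. The key object for a good segment $(x,n)\in\mathcal G$ is the inverse branch of $f^{n-1}$ sending $f^{n-1}x$ back to $x$, defined on a ball $B(f^{n-1}x,\rho)$ of radius $\rho$ independent of $(x,n)$.

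\textbf{Step 1: uniform inverse branches.} I will prove that there is $\rho=\rho(\varepsilon)>0$ such that for every $(x,n)\in\mathcal G$ there is a set $V(x,n)\ni x$ with the following properties: $f^{n-1}$ maps $V(x,n)$ diffeomorphically onto $B(f^{n-1}x,\rho)$, and $V(x,n)\subset B_n(x,\varepsilon)$. This is the analogue of \cref{hyp_times_prop}. The inverse branch is built inductively, one step at a time, going backward from $f^{n-1}x$.
\begin{itemize}
  \item The condition $S_k\psi^c(f^{n-k}x)>kr$ for all $k$ gives backward contraction along $E^c$, and uniform expansion gives backward contraction along $E^u$. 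So the image of $B(f^{n-1}x,\rho)$ under the $j$-step inverse branch lies in a ball of radius $C e^{-rj}\rho$ about $f^{n-1-j}x$.
  \item I expect this to be the main obstacle. One must ensure these small balls stay away from $\mathcal C$, so that $f$ is a local diffeomorphism there with controlled inverse. I would use (S1) and (S2): the good-segment condition at $k=1$ gives $\|Df|_{E^c}\|>e^r$ at each $f^{n-1-j}x$, hence $\dist(f^{n-1-j}x,\mathcal C)$ is bounded below uniformly by (S1). This bound, together with continuity of $Df$ on a compact set away from $\mathcal C$, lets one pick $\rho$ small so each inductive step keeps the cone estimates.
  \item Choose $\rho\le\varepsilon/C$. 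Then \cref{back_contract} gives $V(x,n)\subset B_n(x,\varepsilon)$.
\end{itemize}

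\textbf{Step 2: transition time.} Apply topological exactness with this $\rho$ to get $\tau=\tau(\rho)$ such that $f^{j}(B(z,\rho))=\Lambda$ for some $0\le j\le\tau$, for every $z\in\Lambda$.

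\textbf{Step 3: gluing.} Given $(x_1,n_1),\dots,(x_k,n_k)\in\mathcal G$, construct nested compact sets backward, starting from the last segment.
\begin{itemize}
  \item Set $W_k=\overline{V(x_k,n_k)}$, shrinking $\rho$ slightly to get closed balls.
  \item Suppose $W_{i+1}$ has been built. By Step 2 there is $j_i\le\tau$ with $f^{j_i}(B(f^{n_i-1}x_i,\rho))=\Lambda\supset W_{i+1}$.
  \item Set the gluing time $\tau_i=j_i-1$, or adjust the indexing by one so that segment lengths match the convention $t_{i+1}=t_i+n_i+\tau_i$.
  \item Let $W_i$ be the set of points $w\in V(x_i,n_i)$ with $f^{n_i-1+j_i}(w)\in W_{i+1}$. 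It is nonempty because $f^{n_i-1}$ maps $V(x_i,n_i)$ onto the ball, and it is compact.
\end{itemize}
Any $y\in W_1$ then satisfies $f^{t_i}y\in V(x_i,n_i)\subset B_{n_i}(x_i,\varepsilon)$ for each $i$, with all gaps bounded by $\tau$. Since $\varepsilon$ was arbitrary, this gives specification at every scale.
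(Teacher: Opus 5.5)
Your architecture is the same as the paper's. You get a ball of radius independent of $(x,n)$ around $f^{n-1}x$ that is reached from inside $B_n(x,\varepsilon)$, use topological exactness at that radius to bound the transition times, and then glue. The paper nests forward with open sets $U_j$ and you chain backward with compact sets; that difference is immaterial. Both versions need $j_i\ge 1$ so that the gap $j_i-1$ is nonnegative, which is free since $f(\Lambda)=\Lambda$. Steps 2 and 3 are fine.

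\textbf{The gap is in Step 1.} You claim that the condition at $k=1$ gives $\|Df|_{E^c}\|>e^r$ at every $f^{n-1-j}x$. It does not: $k=1$ only says $\psi^c(f^{n-1}x)>r$, which concerns the last point of the segment. For an intermediate point, all the definition of $\mathcal G$ yields is
\[
\psi^c(f^{n-1-j}x)\;>\;(j+1)r-S_j\psi^c(f^{n-j}x),
\]
and the right-hand side is very negative whenever the subsequent expansion is large.
\begin{itemize}
\item A good segment may therefore pass arbitrarily close to $\mathcal C$, provided enough expansion follows. This is exactly why hyperbolic times carry the separate requirement $\dist_\delta(f^{n-k}x,\mathcal C)\ge\sigma^{bk}$.
\item Such segments do occur. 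In the quadratic and Viana examples $H$ has full Lebesgue measure. Applying \cref{pliss} to $a_j=\psi^c(f^{j-1}x)$ shows that Lebesgue-almost every $x$, including points arbitrarily close to $\mathcal C$, has infinitely many $n$ with $(x,n)\in\mathcal G$.
\item Hence (S1) gives no uniform lower bound on $\dist(f^{n-1-j}x,\mathcal C)$.
\item Without that bound, neither of your Step 1 conclusions follows: the existence of an injective inverse branch on $B(f^{n-1}x,\rho)$ with $\rho$ independent of $(x,n)$, or the claim that the $j$-th pull-back lies in a ball of radius $Ce^{-rj}\rho$. The latter needs derivative control on the whole pulled-back ball, not only along the orbit.
\end{itemize}

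\textbf{How the paper handles this, and a fix.} The paper attempts no distance bound. It reads $B(f^{n_i-1}x_i,\varepsilon/C)\subset f^{n_i-1}\bigl(B_{n_i}(x_i,\varepsilon)\bigr)$ directly off \cref{back_contract}. Citing that lemma would not close your gap, because its asserted uniformity meets the same obstruction.
\begin{itemize}
\item Take $(x,n)\in\mathcal G$ with $n\ge 2$ and $x$ near a fold of $\mathcal C$, for instance $x=(\theta,t)$ with $|t|$ small for $f_\alpha$, or $x$ near $0$ for $f_a$.
\item The mirror point $y$ with $f(y)=f(x)$ lies in $B_n(x,\varepsilon_0)$.
\item But $d(f^{n-1}x,f^{n-1}y)=0<d(x,y)$, which contradicts \eqref{eq:backward-contraction} at $k=0$.
\end{itemize}
One way to make Step 1 correct is to build recurrence control into the good collection. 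For example, also require
\[
\dist_\delta(f^{n-k}x,\mathcal C)\ge e^{-brk}\quad\text{for }1\le k\le n,
\]
with $0<b<\min\{\frac12,\frac1{2\beta}\}$ as in the definition of hyperbolic times. Then the inductive argument of \cref{hyp_times_prop} runs. The $k$-th pull-back has radius at most of order $e^{-rk/2}\rho$, well below the distance $e^{-brk}$ to $\mathcal C$, so (S1)--(S2) control the local inverse and the distortion terms are summable. The price is that segments violating this condition must move into $\mathcal S$. The pressure estimate of \cref{specification-gap} must then be re-proved for the enlarged bad collection.
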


\begin{proof}
Since having specification at some scale \(\varepsilon_*>0\) implies specification at every larger scale \(\varepsilon\ge \varepsilon_*\), it is enough to prove the theorem for all sufficiently small \(\varepsilon>0\). We assume from now on that \(0<\varepsilon\le \varepsilon_0\), where \(\varepsilon_0\) is given by Lemma~\ref{back_contract}.

Let \(\left\{(x_i, n_i)\right\}_{i=1}^k\) be any finite collection of orbit segments from \(\mathcal G\). Since $(x_i,n_i)\in\mathcal G$, \cref{back_contract} gives backward uniform contraction along this ``good'' orbit: for all $y\in B_{n_i}(x_i,\varepsilon)$ and $0\le m\le n_i-1$,
\[
d\bigl(f^m x_i, f^m y\bigr)
\;<\; C e^{-r(n_i-1-m)}\,
d\bigl(f^{n_i-1}x_i, f^{n_i-1}y\bigr)
\;<\; C e^{-r(n_i-1-m)}\varepsilon.
\]
Thus the inverse branch of $f^{n_i-1}$ that maps back to $x_i$ is uniformly contracting on a neighborhood of $f^{n_i-1}x_i$, with contraction rate $e^{-r}$ up to a uniform multiplicative constant. In particular, $f^{n_i-1}$ maps $B_{n_i}(x_i,\varepsilon)$ diffeomorphically onto an open neighborhood of $f^{n_i-1}x_i$ that contains the open disk $B(f^{n_i-1}x_i,\varepsilon/C)$.

By topological exactness, for any \(\varepsilon >0 \), there exists some \(\tau = \tau(\varepsilon/C) \in \mathbb{N}\) such that for any open disk \(B(x,\varepsilon/C)\), we have \(f^j(B(x, \varepsilon/C)) = \Lambda\) for some \(j \leq \tau\). Hence for any \(i = 1, \ldots, k-1\), we can select \(\tau_i \leq \tau(\varepsilon/C)\) such that the image of \(B(f^{n_i-1} x_i, \varepsilon/C)\) under \(f^{\tau_i}\) covers \(B_{n_{i+1}}(x_{i+1}, \varepsilon)\).

As a consequence of these, we define
\[
t_1=0 \quad \text{and} \quad
t_j=\sum_{i=1}^{j-1} (n_i-1 + \tau_i)  \quad \text{ for } j=2, \ldots, k
\]
and set
\[
U_1 := B_{n_1}(x_1, \varepsilon).
\]
For \(j=1,\ldots,k-1\), define
\[
U_{j+1}
:= \{u\in U_j : f^{t_{j+1}}u \in B_{n_{j+1}}(x_{j+1},\varepsilon)\}.
\]
By definition each \(U_j\) is open, and the family is nested:
\[
U_k \subset \cdots \subset U_{j+1} \subset U_j \subset \cdots \subset U_1.
\]

We claim that for every \(j=1,\ldots,k\),
\begin{enumerate}
\item \(U_j\neq\emptyset\),
\item \(f^{t_j}(U_j)=B_{n_j}(x_j,\varepsilon)\).
\end{enumerate}

For \(j=1\) this is clear, since \(t_1=0\) and
\(U_1 = B_{n_1}(x_1,\varepsilon)\).

Assume that (1)–(2) hold for some \(1\le j\le k-1\). Let \(z\in B_{n_{j+1}}(x_{j+1},\varepsilon)\) be arbitrary. By the choice of \(\tau_j\), there exists
\[
w \in B\bigl(f^{n_j-1}x_j,\varepsilon/C\bigr) \subset W_j = f^{n_j-1}\bigl(B_{n_j}(x_j,\varepsilon)\bigr)
\]
such that \(f^{\tau_j}w = z\). Since \(w\in W_j\), there is a unique \(y\in B_{n_j}(x_j,\varepsilon)\) with \(f^{n_j-1}y = w\). By the induction hypothesis \(f^{t_j}(U_j)=B_{n_j}(x_j,\varepsilon)\), so we can choose \(u\in U_j\) with \(f^{t_j}u = y\). Then
\[
f^{t_{j+1}}u
= f^{t_j+n_j-1+\tau_j}u
= f^{\tau_j}w
= z\in B_{n_{j+1}}(x_{j+1},\varepsilon),
\]
so \(u\in U_{j+1}\). Since \(z\) was arbitrary, this shows that
\(f^{t_{j+1}}(U_{j+1}) = B_{n_{j+1}}(x_{j+1},\varepsilon)\), and in
particular \(U_{j+1}\neq\emptyset\). This completes the induction.

Thus all \(U_j\) are nonempty and
\[
U_k \subset \cdots \subset U_2 \subset U_1,
\]
so
\[
\bigcap_{j=1}^{k} U_j = U_k \neq \emptyset.
\]
Choose any \(y \in \bigcap_{j=1}^{k} U_j\). Then for every \(1\le j\le k\)
we have \(f^{t_j}(y) \in B_{n_j}(x_j,\varepsilon)\), which is equivalent to
\[
d_{n_j}\bigl(f^{t_{j}}(y),x_j\bigr)<\varepsilon
\quad \text{for every }1\le j\le k.
\]
Thus the orbit of \(y\) \(\varepsilon\)–shadows the collection of orbit segments \(\left\{(x_j,n_j)\right\}_{j=1}^k\) with gaps bounded by \(\tau(\varepsilon/C)\), and \(\mathcal G\) has specification at scale \(\varepsilon\).
\end{proof}

\subsubsection{Pressure Gap}
We prove that if \(0<r\leq \frac{c_0}{3}\), then for potentials \(\varphi\) satisfying the hypothesis of \cref{thm:main}, or of \cref{thm:main-1d} with the decreased \(c_0\) specified above, the ``bad orbits'' carry less topological pressure than the whole space. Recall we define \(\psi^c(x) = \log \|Df|_{E^c(x)}\|\) and
\[
\mathcal{S} \;:=\;
\bigl\{ (x, n) \in \Lambda \times \mathbb{N} : S_n \psi^c(x) \leq nr \bigr\}.
\]
We have the following pressure gap estimate:
\begin{theorem}\label{specification-gap}
    \[
    P(\mathcal{P} \cup \mathcal{S}, \varphi)< P_{\mathrm{top}}(\varphi; f ).
    \]
\end{theorem}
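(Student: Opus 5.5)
Since \(\mathcal P=\emptyset\), it suffices to bound \(P(\mathcal S,\varphi)\). I will follow the standard Climenhaga--Thompson route: relate the pressure of \(\mathcal S\) to free energies of invariant measures carried by \(\mathcal S\), and then use the pressure estimates outside \(\mathcal K\).

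\textbf{Step 1: from separated sets to an invariant measure.} Fix \(\epsilon>0\) small. For each \(n\) with \(\mathcal S_n\neq\emptyset\), choose an \((n,\epsilon)\)-separated set \(E_n\subset\mathcal S_n\) that nearly realizes \(\Lambda_n^{\mathrm{sep}}(\mathcal S,\varphi,\epsilon)\). Form the weighted atomic measures
\[
\sigma_n=\frac{\sum_{x\in E_n}e^{S_n\varphi(x)}\delta_x}{\sum_{x\in E_n}e^{S_n\varphi(x)}},
\qquad
\mu_n=\frac1n\sum_{k=0}^{n-1}f^k_*\sigma_n .
\]
Take a subsequence realizing the \(\limsup\) in \(P(\mathcal S,\varphi,\epsilon)\) along which \(\mu_n\to\mu\) weak*. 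The second half of the proof of the variational principle (Misiurewicz's argument, \cite[Theorem 9.10]{Wal82}) then gives
\[
P(\mathcal S,\varphi,\epsilon)\le h_\mu(f)+\int\varphi\,d\mu .
\]
This requires a partition with \(\mu\)-null boundaries of diameter less than \(\epsilon\), which is standard.

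\textbf{Step 2: the limit measure has small center exponent.} Every \(x\in E_n\) satisfies \(\frac1n S_n\psi^c(x)\le r\), so \(\int\psi^c\,d\mu_n\le r\). The function \(\psi^c\) is continuous away from \(\mathcal C\) and tends to \(-\infty\) at \(\mathcal C\), and it is bounded above. The truncations \(\max(\psi^c,-L)\) are therefore continuous and decrease to \(\psi^c\), so \(\mu\mapsto\int\psi^c\,d\mu\) is upper semicontinuous. Hence
\[
\int\psi^c\,d\mu\le\liminf_n\int\psi^c\,d\mu_n\le r\le c_0/3 .
\]
This semicontinuity step is the one I expect to be the main obstacle. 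It needs \(\psi^c\) to be defined and upper semicontinuous on all of \(\Lambda\), taking the value \(-\infty\) on \(\mathcal C\) and using continuity of \(E^c\). Once that holds, \(\int\psi^c\,d\mu\) makes sense in \([-\infty,r]\).

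\textbf{Step 3: apply the outside-\(\mathcal K\) estimates.} Take the ergodic decomposition \(\mu=\int\eta\,d\xi(\eta)\). Then \(\int\lambda^c(\eta)\,d\xi\le c_0/3\), but individual components may have \(\lambda^c(\eta)>c_0/3\), so we cannot simply say each \(\eta\notin\mathcal K\).

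\begin{itemize}
\item In the one-dimensional case this is harmless. \cref{low_pressure_outside_1d} applies to every invariant \(\mu\) with \(\int\log|f'|\,d\mu\le c_0/3\), which gives \(h_\mu+\int\varphi\,d\mu\le P_{\mathrm{top}}-2c_0/3\).
\item In the setting of \cref{thm:main}, I would instead rerun the chain of inequalities in \cref{low_pressure_outside} directly for the non-ergodic \(\mu\). Ruelle's inequality holds for invariant measures by ergodic decomposition: \(h_\mu\le\int J^u\,d\mu+\int\max(\psi^c,0)\,d\mu\). To keep the bound, I would also need the center exponent's positive part controlled. Two options are to note that components with \(\lambda^c\le0\) contribute even less entropy, or to choose \(r\) smaller, say \(r\le c_0/3\) with some margin. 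Either way, using \(\sup J^u-\inf J^u<\kappa\), the variation bound on \(\varphi\), and Pesin's formula for \(\mu_{\mathrm{SRB}}\), I expect
\[
h_\mu(f)+\int\varphi\,d\mu\le P_{\mathrm{top}}(\varphi;f)-\frac{c_0}{6}.
\]
\end{itemize}

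\textbf{Step 4: conclude.} The bound from Step 3 is uniform in \(\epsilon\). Letting \(\epsilon\to0\) gives
\[
P(\mathcal P\cup\mathcal S,\varphi)\le P_{\mathrm{top}}(\varphi;f)-\frac{c_0}{6}<P_{\mathrm{top}}(\varphi;f),
\]
which is the claimed pressure gap.
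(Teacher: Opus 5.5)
Your outline is the paper's own route: weighted empirical measures on separated subsets of $\mathcal S_n$, then Misiurewicz's half of the variational principle, then the low-pressure lemma applied to the limit measure. There is a genuine gap in Step 2, and it breaks the argument.

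Since $\max(\psi^c,-L)\downarrow\psi^c$, the map $\nu\mapsto\int\psi^c\,d\nu=\inf_L\int\max(\psi^c,-L)\,d\nu$ is an infimum of weak$^*$-continuous functionals. It is therefore \emph{upper} semicontinuous, which gives
\[
\limsup_{n\to\infty}\int\psi^c\,d\mu_n\le\int\psi^c\,d\mu .
\]
That is a lower bound for $\int\psi^c\,d\mu$. The inequality you need, $\int\psi^c\,d\mu\le\liminf_n\int\psi^c\,d\mu_n$, is lower semicontinuity. It fails exactly because $\psi^c\to-\infty$ at $\mathcal C$. A single iterate very close to $\mathcal C$ has weight $1/n$ in $\mu_n$, can by itself force $\frac1n S_n\psi^c\le r$, and vanishes in the limit. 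The paper asserts the same bound ``by weak$^*$-convergence and the definition of $\mathcal S$''. That is valid only for continuous $\psi^c$, so the paper does not supply the missing step either.

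The gap cannot be repaired for $\mathcal S$ as defined once $\Lambda\cap\mathcal C\neq\emptyset$. The reason is that $(x,n)\in\mathcal S$ whenever $f^{n-1}x\in\mathcal C$, no matter what the earlier iterates do.

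Here is a concrete counterexample: a Benedicks--Carleson map with $\varphi=0$.
By Misiurewicz's horseshoe theorem, take pairwise disjoint closed intervals $K_1,\dots,K_s\subset I_a$ with $f^k(K_i)\supset\bigcup_j K_j$ and $\frac1k\log s>h_{\mathrm{top}}-\delta$. Choose $t$ with $f^t(K_1)=I_a$. For each word $w\in\{1,\dots,s\}^N$, nested intervals and the intermediate value theorem give a point $x_w$ with $f^{k(i-1)}x_w\in K_{w_i}$ for $i\le N$ and $f^{kN+t}x_w=0$. Then $(x_w,kN+t+1)\in\mathcal S$. These $s^N$ points are $(kN+t+1,\epsilon_0)$-separated, where $\epsilon_0=\min_{i\ne j}\dist(K_i,K_j)$. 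Hence $P(\mathcal S,0)\ge h_{\mathrm{top}}-\delta$ for every $\delta>0$, which contradicts the claimed gap.

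The same happens in general. Gluing a short segment that ends on $\mathcal C$ onto $\mathcal G$-segments, as in \cref{specification}, gives $P(\mathcal S,\varphi)\ge P(\mathcal G,\varphi)$. For this decomposition $\max\{P(\mathcal G,\varphi),P(\mathcal S,\varphi)\}\ge P_{\mathrm{top}}(\varphi;f)$, so $P(\mathcal S,\varphi)\ge P_{\mathrm{top}}(\varphi;f)$. What would be needed is a different decomposition. For example, define the bad collection through a continuous truncation of $\psi^c$, or through a distance-to-$\mathcal C$ condition as in hyperbolic times, and adjust $\mathcal G$ to match.

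Separately, your Step 3 in the higher-dimensional case is not closed. The constraint $\int\psi^c\,d\mu\le r$ gives no lower bound on the $\xi$-mass of components with $\lambda^c\le c_0/3$ unless center exponents of ergodic measures are bounded below. Without such a bound, a small weight on a component with very negative exponent satisfies the constraint while keeping the free energy near $P_{\mathrm{top}}$. Neither of your two proposed fixes supplies such a bound. In one dimension, \cref{low_pressure_outside_1d} supplies it through nonnegativity of exponents.
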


\begin{proof}
    We start by taking \(\mathcal{P} = \emptyset\). To describe \(P(\mathcal{S},\varphi)\), recall we have proven in \cref{low_pressure_outside} (using the all-invariant interval estimate following it in the proof of \cref{thm:main-1d}) that
    \[
    \sup_{\mu \in \mathcal{M}_f} \Bigl\{ h_{\mu}(f) +\int \varphi \,d\mu:   \lambda^c(\mu) \leq \frac{c_0}{3} \Bigr\}
    < P_{\mathrm{top}}(\varphi; f ).
    \]
    So for \(0<r\leq \frac{c_0}{3}\),
    \begin{equation}\label{eq:upper-bound-entropy}
    \sup_{\mu \in \mathcal{M}(f)} \Bigl\{ h_{\mu}(f) +\int \varphi \,d\mu:   \lambda^c(\mu) \leq r \Bigr\}
    < P_{\mathrm{top}}(\varphi; f ).
    \end{equation}

    Since we define \(\psi^c(x) = \log \| Df|_{E^c(x)} \|\), for an \(f\)-invariant measure \(\mu\), its center Lyapunov exponent satisfies
    \(\lambda^c(\mu) = \int \psi^c \, d\mu\).

    The left-hand side of \eqref{eq:upper-bound-entropy} can be related to \(P(\mathcal{S},\varphi)\): Let \(E_n \subset \mathcal{S}_n\) be any \((n,\varepsilon)\)-separated set. Consider the Borel probability measure

    \[
    \nu_n \;=\;\frac{1}{\sum_{x\in E_n}e^{S_n\varphi(x)}}\, \sum_{x\in E_n}\delta_x e^{S_n\varphi(x)}, \quad
    \mu_n \;=\; \frac{1}{n}\sum_{k=0}^{n-1} f_*^k \nu_n.
    \]

    Then half of the proof of the variational principle (see \cite[Theorem 8.6]{Wal82}) shows that if \(\mu\) is a limit point of \(\mu_n\), then \(\mu\) is \(f\)-invariant and satisfies
    \[
    h_{\mu}(f) +\int \varphi\, d\mu\;\geq\; P(\mathcal{S}, \varphi, \varepsilon).
    \]
    Moreover, by weak\(^*\)-convergence and the definition of \(\mathcal{S}\), we have
    \[
    \lambda^c(\mu) \;=\; \int \psi^c \, d\mu \;\leq\; r.
    \]
    Combining this with \eqref{eq:upper-bound-entropy}, we conclude that
    \[
    P(\mathcal{S}, \varphi) \;<\; P_{\mathrm{top}}(\varphi; f ).
    \]
    Since \(\mathcal{P} = \emptyset\), this implies \(P(\mathcal{P} \cup \mathcal{S}, \varphi) < P_{\mathrm{top}}(\varphi; f )\). By definition, \(P_{\mathrm{spec}}^{\perp}(\varphi)<P_{\mathrm{top}}(\varphi; f )\), completing the proof.
\end{proof}

\subsubsection{Bowen Property}

\begin{theorem}\label{bowen-property}
Any H\"older continuous potential \(\varphi\) satisfies the Bowen property on \(\mathcal{G}\) at scale \(\varepsilon_0\).
\end{theorem}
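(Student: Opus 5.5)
The plan is the standard one for Hölder potentials along uniformly backward-contracting orbit segments: use \cref{back_contract} to turn closeness in the Bowen metric into exponentially small pointwise distances, then sum a geometric series. Let \(\varphi\) be \(\alpha\)-Hölder with constant \(|\varphi|_\alpha\), so that \(|\varphi(u)-\varphi(v)|\le |\varphi|_\alpha\, d(u,v)^\alpha\). Fix \((x,n)\in\mathcal G\) and \(y\in B_n(x,\varepsilon_0)\), where \(\varepsilon_0\) and \(C\) are the constants of \cref{back_contract}.

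First, \cref{back_contract} gives, for every \(0\le k\le n-1\),
\[
d(f^kx,f^ky)< C e^{-r(n-1-k)}\varepsilon_0 .
\]
Next, split the difference of Birkhoff sums term by term and apply the Hölder estimate:
\[
|S_n\varphi(x)-S_n\varphi(y)|\le \sum_{k=0}^{n-1}|\varphi|_\alpha\, d(f^kx,f^ky)^\alpha
\le |\varphi|_\alpha C^\alpha\varepsilon_0^\alpha\sum_{m=0}^{n-1}e^{-r\alpha m}.
\]
Summing the geometric series gives the bound
\[
|S_n\varphi(x)-S_n\varphi(y)|\le K:=\frac{|\varphi|_\alpha C^\alpha\varepsilon_0^\alpha}{1-e^{-r\alpha}},
\]
and \(K\) is independent of \(n\), \(x\) and \(y\). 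This is exactly the Bowen property on \(\mathcal G\) at scale \(\varepsilon_0\). Smaller scales follow for free, since \(B_n(x,\varepsilon)\subset B_n(x,\varepsilon_0)\) whenever \(\varepsilon\le\varepsilon_0\).

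The main point to check is that \cref{back_contract} really applies to every \(y\) in the Bowen ball, and not only to \(y\) in the image of a single inverse branch. A priori \(f^{n-1}y\) might have been reached through a different branch of \(f^{-(n-1)}\) than the one through \(x\). The step-by-step closeness \(d(f^kx,f^ky)<\varepsilon_0\) for every \(k\) handles this. At each step, \(f^ky\) lies in the \(\varepsilon_0\)-ball around \(f^kx\), on which the local branch of \(f\) is a diffeomorphism, since good orbits stay a definite distance from \(\mathcal C\) at the relevant times. Hence \(f^ky\) is the image of \(f^{k+1}y\) under the inverse branch determined by the orbit of \(x\), and the backward-contraction estimate chains correctly. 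I would rely on \cref{back_contract} as stated for this, noting if necessary that \(\varepsilon_0\) can be shrunk so that these local branches are injective. In the one-dimensional case of \cref{thm:main-1d} the same argument runs verbatim with \(E^u\) trivial.
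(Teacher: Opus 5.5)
Your argument is the same as the paper's: H\"older continuity of $\varphi$ combined with the backward contraction of \cref{back_contract}, summed as a geometric series in $e^{-r\alpha}$, gives a bound independent of $(x,n)\in\mathcal G$ and $y\in B_n(x,\varepsilon_0)$. You should drop the justification in your last paragraph, because it is incorrect. Membership in $\mathcal G$ only forces $\psi^c(f^{n-1}x)>r$; at earlier times it gives only $\psi^c(f^kx)>(n-k)r-(n-k-1)\sup\psi^c$, so $f^kx$ can lie exponentially close (in $n-k$) to $\mathcal C$. Consequently no uniform shrinking of $\varepsilon_0$ makes the local branches injective, and step-by-step closeness does not put $f^ky$ on the branch through $f^kx$. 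Like the paper's proof, yours rests on \cref{back_contract} exactly as stated.
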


\begin{proof}
Because \(\varphi\) is H\"older, there exist constants \(K>0\) and \(\alpha \in (0,1)\) such that
\[
\bigl|\varphi(x) - \varphi(y)\bigr|
\;\le\;
K\,\bigl(d(x,y)\bigr)^\alpha
\quad
\text{for all } x,y \in \Lambda.
\]
Take \((x,n)\in \mathcal{G}\) and \(y \in B_n(x,\varepsilon)\). By \cref{back_contract},
\[
\bigl|S_n \varphi(x) \;-\; S_n \varphi(y)\bigr|
\;\le\;
K \sum_{k=0}^{n-1}
\Bigl(d\bigl(f^k x,\, f^k y\bigr)\Bigr)^\alpha
\;<\;
K \sum_{k=0}^{n-1}
\Bigl(Ce^{-r\,(n-1-k)}\,\varepsilon\Bigr)^\alpha.
\]
Re-indexing with \(j = n-1-k\) and noting the resulting geometric series,
\[
K\sum_{k=0}^{n-1}
\Bigl(Ce^{-r(n-1-k)}\,\varepsilon\Bigr)^\alpha
\;<\;
KC^{\alpha}\,\varepsilon^\alpha
\sum_{j=0}^{\infty} e^{-r\,\alpha\,j}
\;<\;
\infty.
\]
Hence, \(\bigl|S_n\varphi(x) - S_n\varphi(y)\bigr|\) is bounded uniformly in \(x,n,\) and \(y\), establishing the Bowen property at scale \(\varepsilon\).
\end{proof}

\subsection{Verification of the Climenhaga--Thompson Criteria}\label{sec:verif}
\begin{proof}[Proof of \cref{thm:main}]We now complete the proof that if a system  satisfies the hypotheses of \cref{thm:main}, then the assumptions of \cref{ct} hold. Consequently, there exists a unique equilibrium state \(\mu\) for \((\Lambda,f,\varphi)\). Moreover, \(\mu\) satisfies the upper level-2 large deviations bound of \cref{ct}.

By \cref{expansivity-gap}, \(P_{\mathrm{exp}^+}^{\perp}(\varphi) \;<\; P_{\mathrm{top}}(\varphi; f )\). We define the decomposition \((\mathcal{P}, \mathcal{G}, \mathcal{S})\) as in \cref{orb-decomp}. In \cref{specification}, we showed that \(\mathcal{G}\) has specification at all scales. In \cref{bowen-property}, we showed that \(\varphi\) has the Bowen property on \(\mathcal{G}\). By \cref{specification-gap}, \(P\bigl(\mathcal{P} \cup \mathcal{S}, \varphi\bigr) \;<\; P_{\mathrm{top}}(\varphi; f )\). Thus, we see that under the hypotheses of \cref{thm:main}, all the hypotheses of \cref{ct} are satisfied for the decomposition \((\mathcal{P}, \mathcal{G}, \mathcal{S})\).
\end{proof}

\begin{proof}[Proof of \cref{thm:main-1d}]
Let \(\varphi\) be a H\"older continuous potential satisfying the pressure hypothesis. We decrease \(c_0\), if necessary, as in \cref{low_pressure_outside_1d}. In the one-dimensional setting, \(E^u\) is trivial and
\[
\lambda^c(\mu)=\int\log|f'|\,d\mu.
\]
As above, we let \(\mathcal K\) be the collection of ergodic invariant measures satisfying \(\lambda^c(\mu)>c_0/3\), and take \(\sigma=e^{-c_0/9}\). By \cref{low_pressure_outside_1d}, for every ergodic invariant measure \(\mu\notin\mathcal K\), we have
\[
h_\mu(f|_J)+\int\varphi\,d\mu
\le P_{\mathrm{top}}(\varphi;f|_J)-\frac{2c_0}{3}.
\]
The argument in \cref{expansivity-gap} therefore applies with \cref{low_pressure_outside_1d} in place of \cref{low_pressure_outside}.

We define the decomposition \((\mathcal P,\mathcal G,\mathcal S)\) as in \eqref{orb-decomp}, with \(0<r\le c_0/3\). Since \cref{low_pressure_outside_1d} applies to every invariant measure, it also gives \eqref{eq:upper-bound-entropy}. Thus, the argument in \cref{specification-gap} gives
\[
P(\mathcal P\cup\mathcal S,\varphi)
<P_{\mathrm{top}}(\varphi;f|_J).
\]
The proofs of \cref{specification,bowen-property} apply without change: \(\mathcal G\) has specification at all scales, and \(\varphi\) has the Bowen property on \(\mathcal G\). 

Consequently, all the hypotheses of \cref{ct} are satisfied. There exists a unique equilibrium state \(\mu_\varphi\), and it satisfies the stated upper large deviations bound. Finally,
\[
\begin{aligned}
h_{\mu_\varphi}(f|_J)
&=P_{\mathrm{top}}(\varphi;f|_J)
  -\int\varphi\,d\mu_\varphi\\
&\ge P_{\mathrm{top}}(\varphi;f|_J)
  -\max_{\nu\in\mathcal M(f|_J)}\int\varphi\,d\nu
>0.
\end{aligned}
\]
\end{proof}

\section{Applications}
In this section, we introduce the class of Viana maps, first constructed by Viana in \cite{Via97}. We review their main properties and verify that they satisfy all the hypotheses of our main theorem. It follows that Viana maps admit a unique equilibrium state for all sufficiently small potentials. Since the present paper is formulated in a general setting, we refer the reader to \cite{Li26} for detailed proofs specialized to Viana maps.

Viana maps form a fundamental class of non-uniformly expanding surface endomorphisms, realized as skew-products of an expanding circle map with a perturbed quadratic family on the fibers. Along the one-dimensional \emph{center} bundle, the derivative is typically expanding. However, when an orbit passes near the critical line $t=0$, the quadratic term folds the fiber, causing a sharp loss of expansion in the local derivative and thereby weakening the accumulated growth along the orbit. This intermittent loss of expansion prevents the system from being uniformly hyperbolic, while still allowing for a rich thermodynamic formalism.

Using our main theorem, we show that for Viana maps every H\"older continuous function whose oscillation is below an explicit threshold admits a unique equilibrium state. The proof adapts the Climenhaga--Thompson machinery: we carve the dynamics into a ``good'' core that enjoys specification and the Bowen property and a ``bad'' tail whose pressure we show is strictly lower than the topological pressure.

\subsection{Viana maps}
Before stating the main theorem, recall Viana’s construction. We refer to \cref{sec:viana-maps} for a more detailed discussion of the dynamical properties of Viana maps.
\begin{definition}[Viana map]
    Consider the map  
    \[
    f_{\alpha}(\theta,t)=\bigl(d\theta \mod 1,\;a_0-t^2+\alpha \sin (2\pi\theta)\bigr),
    \qquad
    (\theta,t)\in S^{1}\times \R,
    \]
Here \(d \geq 2\) is an integer, \(\alpha\) is a small real number and the parameter \(a_0 \in (1, 2)\) is such that the map \(h(t) = a_0 - t^2\) has a pre-periodic (but not periodic) critical point. For \(\alpha\) small enough there is an interval \(I \subset (-2,2)\) for which \(f_{\alpha}(S^1\times I) \subset \mathrm{int}(S^1\times I)\).
\end{definition}

In \cite{Via97}, Viana constructed this quadratic skew-products over the linear strongly expanding map of the circle and proved that if \(d\geq 16\), then there exists a positive constant $c_0 > 0$ such that any map $f$ sufficiently $C^{3}$-close to $f_{\alpha}$ has both Lyapounov exponents greater than $c_0$ Lebesgue-a.e.. Buzzi, Sester and Tsujii later in \cite{BST03} proved that if \(d\ge2\), then for every sufficiently small \(\alpha>0\) the same is true in a $C^{\infty}$-neighborhood.

We give a precise version of \cref{Li26} with explicit bounds on the variation of the potential. The main result of this section is the following.

\begin{theorem}[\cite{Li26}, Theorem A]\label{main}
    For all \(d\ge16\) and \(\epsilon>0\) sufficiently small, there is an \(\alpha_0>0\) such that for all \(\alpha\le\alpha_0\) and all Hölder potentials  with \(\sup \varphi-\inf \varphi <\frac{c_0}{2}-\log \frac{d+\epsilon}{d-\epsilon}\), the Viana map \(f_{\alpha}\) has a unique equilibrium state \(\mu\) for \(\varphi\). Moreover, \(\mu\) has the upper level-2 large deviations bound of \cref{ct}.
\end{theorem}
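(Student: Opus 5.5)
The plan is to deduce this from \cref{thm:main}, checking each hypothesis for \(f_\alpha\) restricted to a compact forward-invariant set. Take \(\Lambda\) to be the maximal attracting set \(\bigcap_{n\ge0} f_\alpha^n(S^1\times I)\). The unstable bundle \(E^u\) is the (nearly horizontal) direction coming from the base circle map \(\theta\mapsto d\theta\). The center bundle \(E^c\) is the vertical fiber direction. Domination holds because \(d\ge16\) is much larger than the fiber derivative \(|2t|\le 4\), and because the coupling term \(2\pi\alpha\cos(2\pi\theta)\) is \(O(\alpha)\). The standard cone argument then gives an invariant unstable cone around the horizontal direction and an invariant center cone around the vertical one.

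The critical set is the line \(\mathcal C=\{t=0\}\). Along the center cone \(\|Df\,v\|/\|v\|\) is comparable to \(|2t|\), so (S1) holds with \(\beta=1\). Conditions (S2) and (S3) follow from \(\det Df=-2td\) together with the smoothness of \(f_\alpha\).

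\textbf{Dynamical hypotheses.} Non-uniform expansion on a positive Lebesgue measure set \(H\), with constant \(c_0\), and slow recurrence to \(\mathcal C\) are exactly Viana's theorem \cite{Via97} for \(d\ge16\) and \(\alpha\le\alpha_0\), as reorganized in \cite{ABV00}. These results give Lebesgue almost every point in \(S^1\times I\) a center exponent \(\ge c_0\) and slow recurrence. Topological exactness on \(\Lambda\) is the step needing separate justification. I would first try to obtain it from the results on Viana maps: the attractor is transitive, and in fact topologically mixing with the exactness property, since small open sets grow in the expanding base direction until they cross the whole fibre circle and are then spread by the center dynamics. If a clean reference is not available, I would argue directly. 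Any open set contains a point with hyperbolic times, so by \cref{hyp_times_prop} its image eventually contains a ball of fixed radius \(\delta_1\). Finitely many further iterates then cover \(\Lambda\), by compactness together with transitivity of the SRB measure, which is unique and has support \(\Lambda\). I expect this step to be the main obstacle.

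\textbf{Unstable Jacobian distortion.} \(J^u\) is the log of the expansion of \(Df\) along the one-dimensional unstable direction. Inside the unstable cone, that expansion lies in \([d-\epsilon,d+\epsilon]\), where \(\epsilon\to0\) as \(\alpha\to0\), because the horizontal component is exactly multiplied by \(d\) and the cone is \(O(\alpha)\)-thin. The ratio of expansions at two points is therefore between \(\frac{d-\epsilon}{d+\epsilon}\) and \(\frac{d+\epsilon}{d-\epsilon}\). So we may take \(\kappa=\log\frac{d+\epsilon}{d-\epsilon}\), which is \(<c_0/2\) once \(\epsilon\) is small.

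Finally, the hypothesis \(\sup\varphi-\inf\varphi<\frac{c_0}{2}-\kappa\) is exactly the assumption of \cref{thm:main}. Applying \cref{thm:main} gives the unique equilibrium state, and \cref{ct} gives the large deviations bound.
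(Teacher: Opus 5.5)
Your proposal follows the paper's route: you check the hypotheses of \cref{thm:main} for \(f_\alpha\) on \(\Lambda\), importing non-uniform expansion and slow recurrence from \cite{Via97,Alv00} and topological exactness from the Alves--Viana strong topological mixing result \cite{AV02}, and you read off \(\kappa=\log\frac{d+\epsilon}{d-\epsilon}\) from the fact that the expansion along the nearly horizontal unstable direction lies in \([d-\epsilon,d+\epsilon]\) (\cref{unstable_Lyap_exp}). Rely on \cite{AV02} for exactness, because your fallback argument does not work: transitivity (or full support of \(\mu_{\mathrm{SRB}}\)) only gives that some iterate of a \(\delta_1\)-ball meets each open set, not that a single iterate covers all of \(\Lambda\), and a transitive map that swaps two pieces of \(\Lambda\) satisfies your premises without being exact.
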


In \cite{Li26}, the proof was designed so that every step survives small perturbations to the dynamics. In particular, the expansivity, specification, and Bowen property we establish for \(f_{\alpha}\) remain valid for all maps in a sufficiently small \(C^{3}\) neighborhood. In the broader framework of this paper, this robustness follows directly from the fact that every such map $f$ satisfies the hypotheses of \cref{thm:main}. This robustness yields the following extension:

\begin{theorem}\label{main_B}
    The same result in \cref{main} is true for any sufficiently small \(C^{3}\) perturbation of \(f_{\alpha}\) (which is not necessarily a skew-product).
\end{theorem}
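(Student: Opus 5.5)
The plan is to deduce \cref{main_B} directly from \cref{thm:main}. The task is to check that every hypothesis of \cref{thm:main} holds, with uniform constants, for every map \(f\) in a sufficiently small \(C^3\)-neighborhood \(\mathcal U\) of \(f_\alpha\). The threshold \(\frac{c_0}{2}-\log\frac{d+\epsilon}{d-\epsilon}\) is then exactly \(\frac{c_0}{2}-\kappa\) with \(\kappa=\log\frac{d+\epsilon}{d-\epsilon}\).

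I would take \(\Lambda\) to be the maximal forward-invariant set \(\bigcap_{n\ge0} f^n(S^1\times I)\). The trapping property \(f(S^1\times I)\subset\mathrm{int}(S^1\times I)\) is open in the \(C^0\) topology, so it persists on \(\mathcal U\). Partial hyperbolicity with \(\dim E^c=1\) is an open condition on a trapping region. For \(f_\alpha\) the splitting is given by an unstable cone around the \(\theta\)-direction, with expansion factor about \(d\) against fiber derivative bounded by \(|2t|\le 4<d\). The center cone is around the vertical direction. Both cone fields stay invariant under \(C^1\)-small perturbations.

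The critical set of \(f\) is a smooth curve \(C^2\)-close to \(\{t=0\}\), transverse to the center cone. Non-flatness (S1)–(S3) with \(\beta=1\) then follows from the nondegenerate quadratic fold, and this persists under \(C^2\) (a fortiori \(C^3\)) perturbation. Topological exactness on \(\Lambda\) I would take from the known analysis of Viana maps in \cite{Li26} and \cite{Via97}: base expansion combined with fiber expansion along hyperbolic times lets every open set grow to full fiber height.

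The substantive inputs are conditions (2) and (3): non-uniform expansion with rate \(c_0\) and slow recurrence on a positive Lebesgue measure set. These are precisely Viana's theorem \cite{Via97} for \(d\ge16\) and the \(C^3\)-robust version, as refined in \cite{ABV00} and \cite{Li26}, which gives Lebesgue-a.e. both exponents larger than \(c_0\) together with slow recurrence to the critical curve. I would quote these results rather than reprove them. This is the step I expect to be the main obstacle, since it is where the \(C^3\) topology and the constant \(d\ge16\) genuinely enter. The point to check is that the constant \(c_0\) obtained is uniform over \(\mathcal U\), so that the same \(\sigma=e^{-c_0/9}\) works for all perturbations.

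Finally, bounded unstable Jacobian distortion is needed. For \(f_\alpha\), \(E^u\) is close to horizontal, and \(|\det Df|_{E^u}|\) equals \(d\) up to an error controlled by the slope of the unstable cone and \(\alpha\). Choosing \(\alpha_0\) and \(\mathcal U\) small in terms of \(\epsilon\) gives \(|\det Df|_{E^u}|\in(d-\epsilon,d+\epsilon)\) on \(\Lambda\). The ratio is therefore in \(\bigl(\frac{d-\epsilon}{d+\epsilon},\frac{d+\epsilon}{d-\epsilon}\bigr)\), so \(\kappa=\log\frac{d+\epsilon}{d-\epsilon}\). This \(\kappa\) is less than \(c_0/2\) once \(\epsilon\) is small.

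With all hypotheses verified, \cref{thm:main} yields a unique equilibrium state for each Hölder \(\varphi\) of the stated variation. The upper level-2 large deviations bound follows from \cref{ct}, exactly as in the proof of \cref{thm:main}.
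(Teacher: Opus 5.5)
Your proposal is correct and takes the same route as the paper. The paper likewise obtains \cref{main_B} by checking that every map in a small \(C^3\)-neighborhood of \(f_\alpha\) satisfies the hypotheses of \cref{thm:main} with the same \(c_0\), where \(\kappa=\log\frac{d+\epsilon}{d-\epsilon}\) comes from the unstable Jacobian being uniformly close to \(d\). One small citation correction: topological exactness for the whole neighborhood is the strong topological mixing result of \cite{AV02}, not something established in \cite{Via97}.
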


Henceforth we use the term \emph{Viana map} for any map \(f\) that lies in a sufficiently small \(C^{3}\)-neighborhood of the reference skew product \(f_{\alpha}\). Unless explicitly stated otherwise, every result we state for a Viana map therefore applies simultaneously to \(f_{\alpha}\) itself and to all its \(C^{3}\) perturbations within this neighborhood.

In this paper we study Viana maps on the forward-invariant region $S^1 \times I$
from Definition~1.1. For such a map $f$ we set
\[
  \Lambda := \bigcap_{n \ge 0} f^n(S^1 \times I),
\]
which equals $f^2(S^1 \times I)$ once $I$ is chosen as in Viana's construction.
All notions of pressure and equilibrium states in Theorems~A and~B are taken for
the restriction $f|_\Lambda : \Lambda \to \Lambda$, i.e.\ among $f$-invariant
Borel probability measures supported on $\Lambda \subset S^1 \times I$.

\subsubsection{Historical Remarks}
We briefly situate our results within the existing literature on Viana maps. Alves proved existence and uniqueness of the SRB measure \cite{Alv00}.
Recently, using the countable Markov coding approach, Lima, Obata and Poletti also obtained uniqueness of the measure of maximal entropy \cite{LOP26}. Alves, Oliveira and Santana employed an inducing scheme to establish existence and finiteness of the measure of maximal entropy \cite{AOS24}.

The present work complements these results in several ways: \cref{main} and \cref{main_B} give a coding free and robust proof of existence and uniqueness for all H\"older potentials whose oscillation is below an explicit threshold, in a form that is uniform over a whole $C^3$-neighborhood of the reference Viana map and hence applies to non-skew perturbations. In addition, the orbit decomposition we build yields a weak Gibbs estimate for the equilibrium state and an upper level-2 large deviation principle. Moreover, to the best of our knowledge this provides the first example of the orbit decomposition approach implemented in a non-invertible setting with critical points.

\subsubsection{Main Properties}\label{sec:viana-maps}
We begin by gathering the necessary materials for the construction of Viana maps with the desired decomposition properties.

We consider from here on these maps \(f\) close to \(f_{\alpha}\) restricted to \(S^1 \times I\). The main properties of \(f\) in a \(C^3\) neighborhood of \(f_{\alpha}\) are summarized below; see \cite{Via97}, \cite{Alv00}.
\begin{itemize}
    \item \textit{Non-uniformly expanding}, that is, for $\alpha$ sufficiently small, there exists a positive constant $c_0 > 0$ such that any map $f$ sufficiently close to $f_{\alpha}$ in the $C^3$ topology has both Lyapounov exponents greater than $c_0$ at Lebesgue almost every point, that is, for Lebesgue a.e. $x=(\theta, t) \in S^1 \times I$.
           
    \[\limsup_{n\rightarrow  +\infty} \frac{1}{n} \sum_{j=0}^{n-1}\log \|Df(f^jx)^{-1}\| < c_0;\]

    \item \textit{Slow recurrence to the critical set}, that is, letting
    \[
    \mathcal C := \{x \in S^1 \times I : \det|Df_x|=0\}
    \]
    denote the critical set of $f$, for every $\gamma>0$ there exists $\delta > 0$
    such that for Lebesgue a.e.\ $x \in S^1 \times I$,
    \[
     \limsup_{n\rightarrow\infty} \frac{1}{n}\sum_{j=0}^{n-1}
     -\log \bigl[\mathrm{dist}_{\delta}(f^j(x),\mathcal{C})\bigr]\leq \gamma.
    \]

    \item Existence and uniqueness of an ergodic absolutely continuous invariant (SRB) measure \(\mu_{\mathrm{SRB}}\);

    \item The attractor of \(f\) within the forward-invariant region \(S^1 \times I\) is defined as the intersection
    \[
    \Lambda =\bigcap_{n\geq 0} f^n(S^1\times I)
    \]
    of all forward images of \(S^1 \times I\), which is just \(\Lambda = f^2(S^1 \times I)\), as long as the interval \(I\) is properly chosen; 

    \item 
    Topologically exact: for any open set \( O \subset S^1 \times I \), there exists \(n= n(O) \in \mathbb{N} \) such that 
    \[
    f^{n}(O) = \Lambda.
    \]
    We note that, in \cite{AV02}, Alves and Viana prove that Viana maps satisfy what they call \emph{strongly topologically mixing} property. This is the same property that we refer to as \emph{topological exactness}, and throughout we use the latter terminology.
\end{itemize}

Next, we study the dynamics of the tangent bundle for Viana maps. Denote by \( f_0 \) the direct product map 
\[
    f_0 = g \times h: S^1 \times I \to S^1 \times I, \quad (\theta, t) \mapsto (d\theta \mod 1, a_0 - t^2).
\]

Since \( Df_0 \cdot \frac{\partial}{\partial \theta} = d \cdot \frac{\partial}{\partial \theta} \) and \( Df_0 \cdot \frac{\partial}{\partial t} = -2t \cdot \frac{\partial}{\partial t} \), with \( d \geq 16 > 4 \) and \( I \subset (-2, 2) \), it is clear that \( f_0 \) is a partially hyperbolic map of the form \( T_{(\theta,t)}(S^1 \times I) = E^u \oplus E^c \), where \( E^u \) is uniformly expanding and \( E^c \) is dominated by \( E^u \). From the theory of partial hyperbolicity, we know that if \( \alpha \) is sufficiently small, the map \( f_{\alpha} \) (like any \( C^1 \)-nearby map) also admits a partially hyperbolic splitting of the form \( E^u \oplus E^c \), which varies continuously. As a consequence, we get

\begin{proposition}\label{unstable_Lyap_exp}
    Given $\epsilon > 0$, if $\alpha$ is sufficiently small, the Lyapounov exponent
    $$\lambda^u(x, f):= \lim_{n\rightarrow\infty}\frac{1}{n}\log \|Df^n|_{E^u(x)}\|$$
    associated to $E^u$ at every point $x$ satisfies
    $$\log(d-\epsilon)\leq \lambda^u(x, f) \leq \log(d+\epsilon),$$
    for any $f$ close to $f_{\alpha}$.
\end{proposition}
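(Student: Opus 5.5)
The plan is a perturbative comparison with the product map \(f_0=g\times h\), for which \(E^u=\mathrm{span}\{\partial_\theta\}\) and \(\|Df_0|_{E^u}\|\equiv d\). First I will show that the unstable bundle of \(f\) lies in a thin cone around \(\partial_\theta\). Let \(\mathrm{Cone}^u_a(x)=\{v=v_\theta\partial_\theta+v_t\partial_t:\ |v_t|\le a|v_\theta|\}\) for a small aperture \(a>0\). For \(f_0\) the slopes transform by \(v_t/v_\theta\mapsto (-2t/d)\,(v_t/v_\theta)\), and \(|2t|<4<d\) on \(S^1\times I\). Hence \(Df_0\) maps \(\mathrm{Cone}^u_a\) strictly inside \(\mathrm{Cone}^u_{(4/d)a}\). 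This strict invariance is an open condition in the \(C^1\) topology. So for every \(f\) that is \(C^1\)-close to \(f_\alpha\), with \(\alpha\) small, \(Df\) maps \(\mathrm{Cone}^u_a\) into its interior. Since \(E^u(x)=\bigcap_n Df^n(\mathrm{Cone}^u_a(f^{-n}x))\) along any backward branch, we get \(E^u(x)\subset \mathrm{Cone}^u_a(x)\) for every \(x\).

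Second, I will get a one-step estimate: for every \(x\) and every unit \(v\in\mathrm{Cone}^u_a(x)\),
\[
d-\epsilon\le \|Df(x)v\|\le d+\epsilon,
\]
provided \(a\) and the \(C^1\)-distance \(\|f-f_0\|_{C^1}\) are small relative to \(\epsilon\). For \(f_0\) itself, \(Df_0 v=(dv_\theta,-2tv_t)\) and \(|v_t|\le a|v_\theta|\). This gives \(\|Df_0v\|/\|v\|\in[(d-4a)/\sqrt{1+a^2},\,\sqrt{d^2+16a^2}]\), which is within \(\epsilon/2\) of \(d\) once \(a\) is small. The derivative of \(f_\alpha\) differs from that of \(f_0\) by \(2\pi\alpha\cos(2\pi\theta)\) in the \(\partial_t\)-component of \(Df\,\partial_\theta\), which is \(O(\alpha)\). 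A further \(C^1\) perturbation adds a uniformly small error. So I fix \(a\) first, and then choose \(\alpha\) and the neighborhood small enough.

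Third, I will use that \(E^u\) is one-dimensional and \(Df\)-invariant, so \(\|Df^n|_{E^u(x)}\|=\prod_{j=0}^{n-1}\|Df(f^jx)|_{E^u(f^jx)}\|\) exactly. Each factor lies in \([d-\epsilon,d+\epsilon]\) by the second step, so
\[
\log(d-\epsilon)\le \tfrac1n\log\|Df^n|_{E^u(x)}\|\le\log(d+\epsilon)
\]
for every \(n\). Passing to the limit, or to \(\liminf/\limsup\) wherever the limit is not known to exist, gives the claim at every point.

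The main obstacle is the first step: making rigorous that the unstable bundle of the perturbed endomorphism lies in the thin cone at every point. For a non-invertible map, \(E^u\) depends on the backward orbit chosen, and this must be handled uniformly over all inverse branches. I will rely on the strict cone invariance, which holds at every point regardless of branch. This forces every candidate unstable direction into \(\mathrm{Cone}^u_a\), so the estimate is branch-independent. If the paper's \(E^u\) is defined on the natural extension, the same argument applies verbatim there.
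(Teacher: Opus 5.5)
Your proposal is correct and follows the paper's route. The paper appeals to the \(C^1\)-robustness and continuity of the partially hyperbolic splitting of \(f_0\), where \(E^u=\mathrm{span}\{\partial_\theta\}\) is expanded by exactly \(d\), and concludes that \(\|Df|_{E^u}\|\) stays uniformly close to \(d\); your explicit cone around \(\partial_\theta\) (using \(d\ge 16>4\)), the one-step bound on that cone, and the product formula for the one-dimensional bundle \(E^u\) are a self-contained verification of that appeal.
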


By applying the previous theorems we derive a lower bound on the entropy of the SRB measure \( \mu_{\mathrm{SRB}} \). By Pesin's formula, the entropy \( h_{\mu_{\mathrm{SRB}}}(f) \) of \( \mu_{\mathrm{SRB}} \) is given by the integrated sum of the positive Lyapunov exponents of \( \mu_{\mathrm{SRB}} \). Therefore, by \cref{unstable_Lyap_exp}, we obtain the following bound:
\[
h_{\mu_{\mathrm{SRB}}}(f) \geq \log(d - \epsilon) + c_0.
\]

\subsubsection{Existence and Uniqueness of Equilibrium States}
We now verify the hypotheses of \cref{thm:main} for Viana maps. As discussed in the previous sections, Viana maps admit a partially hyperbolic splitting
\[
T_\Lambda M = E^u \oplus E^c,
\qquad \dim E^c = 1,
\]
where \(E^u\) is uniformly expanding and the critical set is non-flat in the center direction. Moreover, Viana maps are non-uniformly expanding and satisfy slow recurrence to the critical set for Lebesgue almost every point. The attractor is also topologically exact, so every sufficiently small open set is mapped onto the whole attractor after finitely many iterates. Finally, the unstable Jacobian has uniformly bounded distortion, since the unstable direction is inherited from the expanding base map and its derivative remains uniformly close to the constant base expansion.

Therefore all the hypotheses of \cref{thm:main} are satisfied by Viana maps. We conclude that every H\"older continuous potential \(\varphi\) with sufficiently small oscillation admits a unique equilibrium state. More precisely, if
\[
\sup \varphi - \inf \varphi < \frac{c_0}{2} - \kappa,
\]
where \(c_0\) is the non-uniform expansion constant and \(\kappa\) is the unstable Jacobian distortion bound, then \(\varphi\) has a unique equilibrium state. Hence Viana maps admit unique equilibrium states for all sufficiently small potentials.

\subsection{Benedicks--Carleson quadratic family}
In this section, we explain how \cref{thm:main-1d} applies to the rich class of quadratic maps known as the \emph{Benedicks--Carleson quadratic family}. As a consequence, we obtain existence and uniqueness of equilibrium states for every H\"older potential on the core interval of the maps in this family, as stated in \cref{BC}.

\subsubsection{\texorpdfstring{On Iterations of $1 - ax^2$ on $(- 1,1)$}{On Iterations of 1 - ax2 on (- 1,1)}}
The quadratic family
\[
f_a \colon [-1,1] \to [-1,1], \qquad f_a(x)=1-ax^2, \quad a\in[0,2],
\]
provides one of the most important examples in one-dimensional dynamics, and its statistical properties have been studied extensively. From the statistical point of view, the first major breakthrough was Jakobson's theorem \cite{Jak81}, which showed that for a positive Lebesgue measure set of parameters, the map exhibits stochastic behavior. More precisely, for such parameters there exists an absolutely continuous invariant measure (in this case, a physical measure) with positive Lyapunov exponent.

\begin{theorem}[Jakobson \cite{Jak81}]
    Consider the quadratic family
    \[
    f_a \colon [-1,1]\to[-1,1], \qquad f_a(x)=1-ax^2, \quad a\in[0,2].
    \]
    Then there exists a positive Lebesgue measure set \(\Delta\subset [0,2]\) such that for every \(a\in\Delta\), the map \(f_a\) admits an absolutely continuous invariant probability measure.
\end{theorem}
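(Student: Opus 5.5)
The plan is to follow the Benedicks--Carleson parameter-exclusion route rather than Jakobson's original argument, and to conclude via the machinery already in the paper. Work with parameters in a small window $[2-\varepsilon,2]$ near the Chebyshev parameter $a=2$. There $f_2$ is conjugate to the full tent map, so the critical orbit lands on the repelling fixed point $-1$ and expansion away from $0$ is uniform. For $a$ near $2$ there are constants $c>0$ and $\delta>0$ with $|(f_a^n)'(x)|\ge C\delta e^{cn}$ as long as the orbit stays outside $(-\delta,\delta)$. Let $\Delta$ be the set of $a$ satisfying, for all $n\ge1$, two conditions:
\[
\text{(EG)}\ \ |(f_a^n)'(f_a(0))|\ge e^{cn/2},\qquad \text{(BA)}\ \ |f_a^n(0)|\ge e^{-\alpha n}
\]
for a small $\alpha>0$. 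The goal is to show $|\Delta\cap[2-\varepsilon,2]|\ge(1-o(1))\varepsilon$ as $\varepsilon\to0$, so in particular $\Delta$ has positive measure.

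The inductive step is the core. Suppose (EG) and (BA) hold up to time $n-1$ on a parameter interval $\omega$. Consider the curve $a\mapsto\xi_n(a)=f_a^n(0)$. First I would establish the comparability $\partial_a\xi_n\asymp\partial_x f_a^{n-1}(f_a(0))$, which holds because the expansion from (EG) dominates the parameter perturbation. This lets space estimates be transferred to parameter space. Whenever $\xi_n(\omega)$ grows large, partition $\omega$ according to the return depth $|\xi_n(a)|\sim e^{-r}$. After a return of depth $r$, a \emph{binding period} $p\lesssim r$ follows, during which the orbit shadows the early critical orbit. The derivative lost at the return, of order $e^{-r}$, is recovered up to a factor $e^{-O(r/\beta)}$, using (BA) and bounded distortion along the bound orbit. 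Then comes a free period of uniform expansion. Parameters with a return deeper than $e^{-\alpha n}$ are excluded. Bounded distortion in parameter space bounds the proportion excluded at a single return by roughly $e^{-\alpha n}/e^{-r}$ relative to its parent interval.

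The main obstacle is controlling the accumulated exclusions. A single step is fine, but (EG) can fail only if the total depth of returns $\sum r_i$ up to time $n$ exceeds $\alpha' n$. I would handle this with a large-deviation count over itineraries. The number of itineraries with total depth $R$ is at most $e^{\eta R}$, while each has parameter measure at most $e^{-R(1-\eta)}|\omega|$. Summing over $R\ge\alpha' n$ gives an excluded measure of order $e^{-\alpha'' n}\varepsilon$. This is summable in $n$ and small relative to $\varepsilon$ once $\varepsilon$ is small, since the first $N(\varepsilon)\sim\log(1/\varepsilon)$ iterates need no exclusion at all. Keeping distortion constants uniform in $n$ throughout the induction, in both the $x$- and $a$-directions, is where I expect the real work to lie.

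Finally, fix $a\in\Delta$. Condition (EG) is the Collet--Eckmann condition, and (BA) yields slow recurrence of the critical orbit. The binding and free-period argument above, run in phase space instead of parameter space, shows that Lebesgue almost every $x$ satisfies the non-uniform expansion and slow recurrence hypotheses of \cref{thm:main-1d}. \cref{ABV ACIM}, specialized to the interval with $\mathcal C=\{0\}$, then produces an absolutely continuous invariant probability measure. Non-flatness holds trivially with $\beta=1$.
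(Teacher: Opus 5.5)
The paper does not prove this statement. It quotes it from \cite{Jak81} as background, and explicitly declines to identify the Benedicks--Carleson parameters used later with Jakobson's $\Delta$.

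Jakobson's own argument works as follows. For good parameters he builds an induced full-branched expanding Markov map with bounded distortion and integrable return time. The absolutely continuous measure of $f_a$ is obtained by spreading the induced map's measure, and parameters where this construction breaks down are excluded.

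You instead take the Benedicks--Carleson route \cite{BC91}. You organize the exclusion around the critical orbit through (EG) and (BA), and you turn the measure estimate into a large-deviation count of return depths. As an outline this route is sound; it is a genuinely different argument, not the paper's.

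Your route proves more than the statement asks for. It makes $a=2$ a one-sided density point of Collet--Eckmann parameters satisfying (BA), which is essentially the parameter class the paper uses in \cref{BC}. Your last step also reproduces the almost-everywhere non-uniform expansion and slow recurrence that the paper imports from \cite{Fre10}. The cost is two long technical arguments.

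That last step is heavier than necessary. $f_a$ is $S$-unimodal, and for such maps the Collet--Eckmann condition alone already yields an absolutely continuous invariant probability, for instance via the Nowicki--van Strien summability criterion. So you can skip the phase-space large deviations and \cref{ABV ACIM}.

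If you keep your route, one step needs correcting. (BA) bounds each return depth individually, not the averages $\frac1n\sum_{j<n}-\log\dist_\delta(f_a^j(0),0)$, so it does not by itself give slow recurrence of the critical orbit. (BA) is what the binding argument needs. Slow recurrence for \emph{every} $\gamma>0$, which the paper's formulation of \cref{ABV ACIM} assumes, has to come from the phase-space large-deviation estimate. That estimate must be run with depth threshold $\log(1/\delta)\to\infty$, so that the counting exponent $\eta$ tends to $0$.
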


Each map \(f_a\) is an \(S\)-unimodal map: it is of class \(C^3\), has a unique critical point, and has negative Schwarzian derivative \cite{Dev21}. This family has received special attention because it already exhibits a remarkably rich range of dynamical phenomena, and at the same time serves as a model for more general non-uniformly expanding systems.

On the other hand, Lyubich \cite{Lyu97} and Graczyk--\'Swiatek \cite{GS97} later showed that the set of regular parameters, namely those for which the map has a hyperbolic periodic attractor, is open and dense. Thus the quadratic family exhibits at least two very different kinds of observable behavior: stochastic dynamics, characterized by expansion and chaotic statistical behavior, and regular dynamics, governed by a periodic attractor. These two regimes alternate through parameter space in a highly intricate way.

It was later shown that stochastic behavior could be concluded from enough expansion along the orbit of the critical value: the Collet--Eckmann condition, exponential growth of $|(f^n)'(1)|$, was enough to conclude a positive Lyapunov exponent of the system. A different approach to Jakobson’s Theorem in \cite{BC85} and \cite{BC91} focused specifically on this property: the set of Collet--Eckmann maps has positive measure. In what follows, the \emph{Benedicks--Carleson quadratic family} refers to the parameters sufficiently close to \(2\) considered in \cite{You92,Fre10}. This parameter class is not identified with the arbitrary positive-measure set \(\Delta\) in Jakobson's theorem above.

We emphasize that the growth of the derivative along the critical orbit is directly related to the sensitivity of nearby orbits to their initial conditions, which is one of the most characteristic features of chaotic dynamics.

\subsubsection{Main Properties}
We now verify that the Benedicks--Carleson quadratic family satisfies the hypotheses needed to apply our main theorem. We begin with topological exactness.

\begin{lemma}[Young {\cite[Lemma 2.1]{You92}}]
    For the Benedicks--Carleson parameters considered here and every nondegenerate interval \(I \subset [-1,1]\), there exists \(n=n(I)\) such that
    \[
    [f_a^2(0),f_a(0)] \subset f_a^n(I).
    \]
\end{lemma}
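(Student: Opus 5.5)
The plan is to show that images of $I$ grow until they contain a definite neighbourhood of the critical point $0$, and then to cover the core interval in a bounded number of further steps. The model for the second step is the orientation-reversing repelling fixed point $p=p_a\in(0,1)$ of $f_a$, which lies in the interior of $I_a=[f_a^2(0),f_a(0)]$. When $a$ is close to $2$, a fixed-size neighbourhood $U$ of $p$ is mapped, after finitely many iterates, onto an interval $[-\zeta,p]$ containing $0$ and $p$. The image of $[-\zeta,p]$ contains $[p,f_a(0)]=[p,1]$, and one more iterate contains $[f_a^2(0),p]$. So some iterate of $U$ contains all of $I_a$.

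This uses two facts about the limit map $f_2$, which is conjugate to the full tent map: the backward orbit of $p$ is dense, and $[p,1]$ maps onto all of $[-1,1]$. Both persist for a finite number of iterates under small perturbation of $a$. It therefore suffices to show that some $f_a^m(I)$ contains either $U$ or a fixed interval $(-\delta,\delta)$; the interval $(-\delta,\delta)$ is itself handled by a fixed finite number of iterates landing in $U$.

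For the growth step I would use the Benedicks--Carleson structure for parameters near $2$. There are two ingredients.

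\textbf{Expansion away from $0$.} There are $\delta>0$, $c>0$ and $\lambda>0$ such that if $f_a^j(x)\notin(-\delta,\delta)$ for $0\le j<k$, then $|(f_a^k)'(x)|\ge c\,e^{\lambda k}$.

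\textbf{Bound-period recovery.} If $x$ is at distance roughly $e^{-r}\ge e^{-\alpha n}$ from $0$, the orbit of $x$ shadows the critical orbit for a bound period $q\lesssim r$. At the end of that period the accumulated derivative $|(f_a^{q+1})'(x)|$ is at least $e^{\lambda' q}$. Collet--Eckmann growth along the critical orbit makes up for the small factor $|f_a'(x)|\approx|x|$.

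Put together, these say that an interval $J$ whose iterates avoid $(-\delta,\delta)$, or whose visits to it are only bound returns of the type above, has length growing like $e^{\lambda'' k}|J|$, with bounded distortion.

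I would then run the standard ``free return'' algorithm on $J_0=I$. Iterate $J_k=f_a(J_{k-1})$ while it does not meet $(-\delta,\delta)$. At a free return, split $J_k$ along the partition $\{x: |x|\in[e^{-r-1},e^{-r})\}$ and look at which pieces it meets.
\begin{enumerate}
\item If $J_k$ meets at least three partition elements, it contains a whole element $I_r$ adjacent to $0$. The image of $I_r$ after its bound period has length bounded below by a fixed amount, say $e^{-\beta r}$ times $e^{\lambda' r}$, which is of order one. Finitely many more free iterates then give an interval of definite length.
\item If $J_k$ meets at most two elements and $\mathrm{dist}(J_k,0)\ge e^{-\alpha k}$, keep the longer piece and continue. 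The length still grows through the bound period, since $|J_k|\gtrsim$ the size of the partition element.
\item If $J_k$ lies within $e^{-\alpha k}$ of $0$ but does not contain $0$, this is excluded. Otherwise lengths would be superexponentially small, contradicting the exponential lower bound on growth.
\item If $J_k$ contains $0$, then $J_k$ contains an interval of length $\ge|J_k|/2$ on one side of $0$, and we are in the first case after rescaling.
\end{enumerate}
Since lengths cannot exceed $2$, the algorithm must terminate in the first case. This yields $m$ with $f_a^m(I)\supset(-\delta',\delta')$, or an interval of definite length $\ell_0$. By compactness and the density of preimages of $p$, an interval of definite length has a bounded iterate containing $U$. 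The first paragraph then finishes the proof.

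The main obstacle is the growth lemma through returns near $0$. One has to make sure that the kept piece at each free return does not lose length, which requires a lower bound $|J_k|\gtrsim|x|\,e^{-r}$ when the piece lies in a partition element $I_r$. One also needs distortion control during bound periods. Both rest on the Benedicks--Carleson basic assumption $|f_a^j(0)|\ge e^{-\alpha j}$ and the Collet--Eckmann growth of $|(f_a^j)'(1)|$. I would import these estimates from \cite{BC91,You92} rather than reprove them, and check only the bookkeeping that lengths increase by a factor $e^{\lambda''(q+1)}/2$ per return cycle.
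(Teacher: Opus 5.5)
The paper gives no proof of this statement; it imports it from Young. Your two-stage plan is the natural one: grow $f_a^k(I)$ to a definite size using free and bound periods, then use closeness to $f_2$ to reach a neighbourhood of $p$ and hence $I_a$. However, the growth stage as written does not close, for two concrete reasons.

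\textbf{Case 3 is not excluded.} The basic assumption $|f_a^j(0)|\ge e^{-\alpha j}$ constrains only the critical orbit. Images of phase-space intervals can return arbitrarily deep without containing $0$: take $I$ a tiny interval adjacent to a point of $f_a^{-k}(0)$. Nothing here is superexponentially small, and the bound $|J_k|\gtrsim e^{\lambda''k}|I|$ gives no contradiction, because $|I|$ is arbitrary. The depth restriction $e^{-r}\ge e^{-\alpha n}$ in your bound-period ingredient is an artifact of parameter exclusion. For a fixed Benedicks--Carleson parameter, (BA) and Collet--Eckmann growth hold for all $j$. So binding to the critical orbit, and the recovery $|(f_a^{p+1})'(x)|\ge e^{c'p}$, hold for returns of every depth, and Case 3 must be handled like Case 2, with no condition on $\mathrm{dist}(J_k,0)$.

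\textbf{Case 1 is not a terminal case.} For $x\in I_r$, the bound period $p\asymp r$ is determined by $|(f_a^{p})'(f_a(0))|\,e^{-2r}\approx e^{-\beta p}$. Hence $|f_a^{p+1}(I_r)|$ is of order $e^{-\beta p}=e^{-\beta' r}$ with $\beta'$ small, not of order one. This is a large gain over $|J_k|\lesssim e^{-r}$ when $I_r$ is the largest element in $J_k\subset(-\delta,\delta)$, but it is still tiny for large $r$. So ``the algorithm must terminate in the first case'' does not produce an interval of definite length, and the compactness/density step has nothing to act on.

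The repair is a stopping-time argument. Follow a kept piece $\omega_k\subset f_a^k(I)$ and treat all four cases as growth steps: in Cases 1 and 4, keep the largest $I_r$ contained in $\omega_k$, respectively in its longer half. Stop the first time $|\omega_k|\ge\ell_0(\delta)$. Before stopping, the bookkeeping for one cycle is as follows.
\begin{enumerate}
\item A free period ending in a return contributes $|(f_a^k)'|\ge c_0e^{\lambda k}$, with $c_0$ independent of $\delta$.
\item The cut at the return loses at most a bounded factor.
\item The bound period has length at least $p_0(\delta)\to\infty$ as $\delta\to0$.
\end{enumerate}
Moreover, $f_a^{p+1}$ is monotone on the bound piece: its images stay $e^{-\beta j}$-close to $f_a^j(0)$, which is at distance $\ge e^{-\alpha j}$ from $0$, and $\beta>\alpha$. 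So the pointwise derivative bound already gives $|f_a^{p+1}(\omega)|\ge e^{c'p}|\omega|$. The lower bound $|J_k|\gtrsim|x|e^{-r}$ that you single out as the main obstacle is therefore not the real issue. For $\delta$ small, lengths at least double per cycle, and $|\omega_k|\le 2$ forces the stop.

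Three smaller points in the second stage.
\begin{enumerate}
\item $f_2([p,1])=[-1,p]$, not $[-1,1]$; it is $[0,1]$ that $f_a$ maps onto $I_a$.
\item From $f_a^m(U)\supset[-\zeta,p]$ you get $f_a^{m+2}(U)\supset[1-a,p]\ni 0$. Then $f_a^{m+3}(U)\supset[f_a^3(0),1]\supset[0,1]$, since $f_a^3(0)<0$ for $a$ near $2$, and finally $f_a^{m+4}(U)\supset I_a$.
\item The continuity argument from $f_2$ requires $a$ close to $2$ depending on $\ell_0$, hence on $\delta$. This is consistent with the Benedicks--Carleson order of constants ($\delta$ is fixed before $a$), but you should state it explicitly.
\end{enumerate}
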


Recall that the interval $I_a: =[f_a^2(0),f_a(0)]=[1-a,1]$ is called the \emph{core interval}. In particular, this implies that \(f_a\) is topologically exact on its invariant core interval $I_a$.

We also need non-uniform expansion and slow recurrence to the critical set. These were established by Freitas.

\begin{theorem}[Freitas {\cite[Theorems 1.1 and 1.2]{Fre10}}]
    Assume that \(a\) is a Benedicks--Carleson parameter. Then:
    \begin{itemize}
        \item \(f_a\) is non-uniformly expanding for Lebesgue almost every point;
        \item \(f_a\) has slow recurrence to the critical set for Lebesgue almost every point.
    \end{itemize}
\end{theorem}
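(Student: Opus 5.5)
The plan is to deduce both properties from the ergodic theory of the absolutely continuous invariant measure $\mu_a$ of $f_a$, using Birkhoff's ergodic theorem on Lebesgue-typical points. Freitas's own argument works directly with the Benedicks--Carleson binding and free-period combinatorics and yields finer tail estimates. For the qualitative statement needed here, the acim route seems more economical. The steps are as follows.

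\textbf{Step 1 (the acim and its basin).} For Benedicks--Carleson parameters, the constructions of \cite{BC91,You92} produce a unique ergodic acim $\mu_a$ supported on the core interval $I_a$. Its density $\rho$ has the form $\rho=\rho_0+\sum_{k\ge1}\rho_k$, where $\rho_0$ is bounded and each $\rho_k$ is bounded by $C e^{-\lambda k}|x-f_a^k(0)|^{-1/2}$; these singular terms come from the square-root singularities at the critical values. Moreover, $\rho$ is bounded below on $I_a$, so $\mu_a$ and Lebesgue restricted to $I_a$ are equivalent. Lebesgue almost every point of $[-1,1]$ eventually enters $I_a$ by the exactness lemma of Young quoted above. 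Hence Lebesgue almost every point lies in the basin of $\mu_a$: it is $\mu_a$-generic up to a finite shift of time, and the time-averaged conditions are insensitive to that shift.

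\textbf{Step 2 (integrability).} Each term $|x-c|^{-1/2}$ lies in $L^p$ for every $p<2$. The series above then converges in $L^p$ for some fixed $1<p<2$, so $\rho\in L^p$. Since $|\log|x||\in L^q$ for every $q<\infty$, H\"older's inequality gives
\[
\int|\log|x||\,d\mu_a<\infty .
\]
Because $|f_a'(x)|=2a|x|$ and $\mathcal C=\{0\}$, both $\log|f_a'|$ and $-\log\operatorname{dist}(x,\mathcal C)$ are $\mu_a$-integrable. Alternatively, one could cite \cref{dist_integrable} once $\lambda(\mu_a)>-\infty$ is known.

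\textbf{Step 3 (the two conclusions).} The exponent $\lambda(\mu_a)=\int\log|f_a'|\,d\mu_a$ is positive. This follows from Pesin's formula $h_{\mu_a}=\lambda(\mu_a)$ together with the fact that an acim of a topologically exact S-unimodal map has positive entropy; alternatively, it follows directly from the Collet--Eckmann condition, as in the Benedicks--Carleson/Young estimates. Birkhoff's theorem then gives, for Lebesgue almost every $x$,
\[
\frac1n\sum_{j<n}\log|f_a'(f_a^jx)|^{-1}\to-\lambda(\mu_a)=:-c_0<0 .
\]
For slow recurrence, note that $-\log\operatorname{dist}_\delta(x,0)=-\log|x|\,\mathbf 1_{\{|x|<\delta\}}$. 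By Step 2 and dominated convergence, its $\mu_a$-integral tends to $0$ as $\delta\to0$. Given $\gamma>0$, choose $\delta$ so that this integral is at most $\gamma$; Birkhoff's theorem then yields the slow recurrence condition at every $\mu_a$-generic point, hence Lebesgue almost everywhere. This is the same argument as in \cref{slow_rec}.

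\textbf{Main obstacle.} The delicate input is Step 1, specifically the $L^p$ (for $p>1$) control of the density and the full-measure basin. A merely $L^1$ density does not make $\log|x|$ integrable. Obtaining this control requires Young's tower with exponential return-time tails, or the explicit Benedicks--Carleson density bound. I would import it from \cite{You92}, taking care that the parameter class there matches the one in \cite{Fre10}, rather than reprove it.
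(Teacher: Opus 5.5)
The paper gives no argument of its own here; it imports both properties from \cite[Theorems 1.1 and 1.2]{Fre10}. There they are extracted directly from the Benedicks--Carleson combinatorics (free and binding periods, control of return depths), together with exponential tail bounds for the first hyperbolic time.

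Your route is genuinely different, and it is valid for the qualitative statement that \cref{thm:main-1d} actually uses. Given an ergodic acim $\mu_a$ equivalent to Lebesgue on $I_a$ with $0<\lambda(\mu_a)<\infty$, Birkhoff's theorem gives both properties Lebesgue-a.e., exactly as in the paper's \cref{dist_integrable,slow_rec}. For slow recurrence, take $\gamma=1/m$ and intersect the countably many full-measure sets, so that a single $H$ serves every $\gamma$. You trade Freitas's hands-on analysis for the existence theory of the acim and lose all rates, which the paper never needs.

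You can also discard what you call the main obstacle: no $L^p$ bound on the density is needed. Since $\log|f_a'|$ is bounded above, $\lambda(\mu_a)$ is defined in $[-\infty,\log 2a]$. Ruelle's inequality $h_{\mu_a}\le\max\{0,\lambda(\mu_a)\}$ together with $h_{\mu_a}>0$ then forces $\lambda(\mu_a)>0$. Hence $\log|f_a'|$, and therefore $\log|x|$, is $\mu_a$-integrable. This is the ``alternative'' you mention in Step 2, and it also makes Pesin's formula unnecessary; the nonnegativity result \cite[Proposition A.1]{RL20} used in \cref{low_pressure_outside_1d} would already give finiteness.

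Finally, Young's exactness lemma is not what brings points into the core. Since $f_a([-1,1])=[1-a,1]=I_a$, every point lands in $I_a$ after one iterate.
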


With these ingredients in hand, we are ready to prove \cref{BC}.

\subsubsection{Existence and Uniqueness of Equilibrium States}
For every Benedicks--Carleson parameter, the quadratic map \(f_a\) is topologically exact on its core interval \(I_a\), has a non-flat critical point at \(0\), and satisfies non-uniform expansion and slow recurrence to the critical point for Lebesgue almost every \(x\in I_a\). Thus the dynamical hypotheses of \cref{thm:main-1d} hold.

The map is smooth, all its periodic points are hyperbolic repelling, and the Collet--Eckmann condition gives \(\lvert(f_a^n)'(f_a(0))\rvert\to\infty\). Hence \cite[Theorem A]{LRL14b} implies that every H\"older continuous potential \(\varphi:I_a\to\mathbb R\) is hyperbolic:
\[
\max_{\nu\in\mathcal M(f_a|_{I_a})}\int\varphi\,d\nu
 <P_{\mathrm{top}}(\varphi;f_a|_{I_a}).
\]
Applying \cref{thm:main-1d}, we obtain a unique equilibrium state with positive entropy and the upper large deviations bound stated in \cref{ct}. This proves \cref{BC}.

\bibliographystyle{amsalpha}
\bibliography{Bibliography}

\end{document}